\pgfplotsset{compat=1.15}
\DeclareMathOperator{\dom}{dom}
\newcommand{\hl}[1]{\textbf{#1}}
\newcommand{\Z}{\mathbb{Z}}
\newcommand{\K}{\mathbb{K}}
\newcommand{\C}{\mathscr{C}}
\newcommand{\act}[1]{M\mathbf{-Act}_{#1}}
\newcommand{\Gact}[1]{G\mathbf{-Act}_{#1}}
\DeclareMathOperator{\im}{im}
\DeclareMathOperator{\Span}{\mathrm{span}}
\DeclareMathOperator{\Set}{\mathbf{Set}}
\DeclareMathOperator{\Sem}{\mathbf{Sem}}
\newcommand{\alg}[1]{\mathbf{Alg}_{#1}}
\renewcommand{\implies}{\Rightarrow}
\newtheorem{teorema}{Theorem}[section]
\newtheorem{corolario}[teorema]{Corollary}
\newtheorem{lema}[teorema]{Lemma}
\newtheorem{proposicao}[teorema]{Proposition}
\theoremstyle{definition}
\newtheorem{definicao}[teorema]{Definition}
\newtheorem{exemplo}[teorema]{Example}
\newtheorem{rem}[teorema]{Remark}
\crefname{teorema}{Theorem}{Theorems}
\crefname{lema}{Lemma}{Lemmas}
\crefname{corolario}{Corollary}{Corollaries}
\crefname{proposicao}{Proposition}{Propositions}
\crefname{definicao}{Definition}{Definitions}
\crefname{exemplo}{Example}{Examples}
\crefname{rem}{Remark}{Remarks}
\crefname{section}{Section}{Sections}
\crefname{equation}{\unskip}{\unskip}
\crefname{enumi}{\unskip}{\unskip}
\newcommand{\af}{\alpha}
\newcommand{\bt}{\beta}
\newcommand{\vf}{\varphi}
\newcommand{\impl}{\Rightarrow}
\newcommand{\ol}{\overline}
\renewcommand{\iff}{\Leftrightarrow}
\newcommand{\leftrightharpoonup}{%
  \mathrel{\mathpalette\lrhup\relax}%
}
\newcommand{\lrhup}[2]{%
  \ooalign{$#1\leftharpoonup$\cr$#1\rightharpoonup$\cr}%
}
\begin{document}
	% \title[Short title]{Long title}	
        \title[Globalization of partial monoid actions via abstract rewriting systems]{Globalization of partial monoid actions\\ via abstract rewriting systems}	
	
	\author{Mykola Khrypchenko}
	\address{Departamento de Matem\'atica, Universidade Federal de Santa Catarina,  Campus Trindade, Florian\'opolis, SC, CEP: 88040--900, Brazil}
	\email{nskhripchenko@gmail.com}
	
	\author{Francisco Klock}
	\address{Departamento de Matem\'atica, Universidade Federal de Santa Catarina, Florian\'opolis, SC, CEP: 88040-900, Brazil}
	\email{francisco\_gabriel25@hotmail.com}
 
	\subjclass[2020]{16W22, 20M30, 16S15, 16S10, 20M05}
	\keywords{partial action, monoid, globalization, abstract rewriting system, local confluence}
	
	\begin{abstract}
        Let $\af$ be a strong partial action of a monoid $M$ on a semigroup $X$, $X_M$ its set-theoretical globalization, $X_M^+$ the free semigroup over $X_M$ and $\to$ the abstract rewriting system on $X_M$ generated by the pairs $([m,x][m,y],[m,xy])$ with $m\in M$ and $x,y\in X$. We show that the local confluence of $(X_M^+,\to)$ is sufficient for the globalizability of $\af$ but, unlike the group case, it is not necessary. Focusing on the monoid $M=G^0$, obtained by adjoining a zero element to a group $G$, we get an explicit criterion for the globalizability of $\af$ and a criterion for the local confluence of $(X_M^+,\to)$. Several applications to strong partial actions of the multiplicative monoid $M=\{0,1\}$ on semigroups and algebras, as well as to strong partial actions of an arbitrary monoid $M$ on left zero and null semigroups, are presented.
	\end{abstract}
	
	\maketitle
	
	\tableofcontents
	
	\section*{Introduction}

Partial actions usually appear in situations where the result of an action is not always defined. It turns out that variations of this notion have emerged independently in different contexts~\cite{Palais,Megrelishvili85,Megrelishvili86,McAlister74-I,Petrich-Reilly79,Green-Marcos94}. A surge of interest in partial actions arose in connection with the pioneering works by Exel~\cite{E-3,E-2,exel1994acao-parcial-origem}, which led to a vast theory developed by multiple scientists from different areas of mathematics. Readers interested in the history of partial actions and in the perhaps most complete (up to the publication date) literature on the subject are strongly advised to consult the survey papers~\cite{Dokuchaev-survey,Batista-survey}, as well as Exel's book~\cite{Exel-book}.

Although, a priori, there are several ways to ``partialize'' an action, the most natural one is obtained via a \textit{restriction}. This immediately leads to the question of whether any partial action $\af$ can be seen as a restriction of a global one, called a \textit{globalization} of $\af$. The answer is, in general, ``no'', and this has already been observed by Palais in~\cite[Theorem X]{Palais}, however, he worked with a slightly weaker notion of a partial group action than the usual one. In a more modern context, the systematic study of the globalization problem for partial actions was initiated by Abadie in his PhD thesis~\cite{Abadie} whose results were later published in~\cite{Abadie03}. Abadie showed that a partial action of a topological group on a topological space can be globalized in a universal way. Independently, this fact was proved by Kellendonk and Lawson~\cite{kellendonk2004partial}, who also commented that their construction goes back to the work by Munn~\cite{Munn76}. Dokuchaev and Exel~\cite{DE} studied a class of partial actions of groups on associative algebras. In this context, not any partial action is globalizable, but \cite[Theorem 4.5]{DE} gives a nice criterion characterizing the globalizable ones as those whose domains are \textit{unital} ideals. There are many other works dealing with globalizations of partial actions of groups~\cite{Steinberg2003,DRS,Ferrero06,Cortes-Ferrero09,Bemm-Ferrero13,KN16,Cortes-Ferrero-Marcos16,jerez2025homologypartialgrouprepresentations} or groupoids~\cite{Gilbert05,Bagio-Paques12,Bagio-Pinedo16,Bagio-Paques-Pinedo20,Marin-Pinedo20,Marin-Pinedo21,Marin-Pinedo-Rodriguez25} on sets with an extra structure. But little is known about globalizations of partial actions of less symmetric objects, such as monoids~\cite{Megrelishvili_2004,PartMonoids}, categories~\cite{Nystedt18}, semigroups~\cite{Kudryavtseva-Laan23} and semigroupoids~\cite{Demeneghi-Tasca25}.

Following the ideas of~\cite{geopactions}, in~\cite{khrypchenko2023partial} we developed a theory of partial monoid actions on objects in a category $\C$ with pullbacks. In particular, under a natural assumption, we gave a general criterion (in terms of certain pullback diagrams similar to the ones of~\cite{saracco2022globalization}) for a strong partial monoid action $\af$ on an object of $\C$ to be (universally) globalizable, extending the classical results~\cite{Megrelishvili_2004,PartMonoids}.

Our next goal was to apply the general abstract results of~\cite{khrypchenko2023partial} to the category $\C$ of algebras to obtain a more concrete classification of globalizable partial actions of a monoid $M$ in the spirit of~\cite[Theorem 4.5]{DE} (under the same assumption on the domains as in~\cite{DE}). However, we soon realized that the situation is much more complicated than we expected. We therefore decided to focus on the simpler case, where $\C$ is the category of semigroups. In this context, whenever $M$ is a group, the globalization problem was solved in~\cite[Theorem 6.1]{KN16}. The proof of~\cite[Theorem 6.1]{KN16} relies on verifying that a certain abstract rewriting system associated to $\af$ is locally confluent, and the same idea, a priori, can be used for an arbitrary $M$.

We begin this paper by fixing the notations and recalling in \cref{sec-prelim} the terminology on partial actions and rewriting systems. 

In \cref{sec-PA-and-ARS}, to a given strong partial action $\af$ of a monoid $M$ on a semigroup $X$, we associate a rewriting system $(X_M^+,\to)$ and reformulate the globalizability of $\af$ in terms of $(X_M^+,\to)$ (see \cref{unique-normal-form-implies-glob}). Unlike the group case, the local confluence of $(X_M^+,\to)$ is only sufficient but not necessary for $\af$ to be globalizable (see \cref{locally-confluent-implies-glob,exemplo-non-locally-confluent-but-globalizable}).

In~\cref{section-Gsqcup0}, which is the main part of the paper, we study partial actions of the monoid $M=G^0$, obtained by adjoining a zero element to a group $G$. Although $M$ is very close to a group, the presence of $0$ breaks the symmetry that one has in the group case, and the proofs become much more technically involved. In \cref{equiv-classes-of-Gsqcup0}, we give a detailed description of the set-theoretic universal globalization of $\af$. Assuming that $\dom\af_m$ and $\im\af_m$ are ideals of $X$ for all $m\in M$, it is relatively easy to see in \cref{af-globalizable-implies-conditions} that a globalizable partial action $\af$ satisfies two conditions: \cref{afg1-afg((xy)z)-group}, which already appeared in \cite[Theorem 6.1]{KN16} and means that the induced partial action of $G$ is globalizable, and \cref{af0xyz-af0afgxafhyz}, which takes into account the action of $0$. But it is a hard task to show that \cref{afg1-afg((xy)z)-group,af0xyz-af0afgxafhyz} are also sufficient for the existence of a globalization of $\af$ --- this is accomplished in \cref{satisfies-proprerties-implies-globalizable} after a series of technical lemmas. Thus, in \cref{af-globalizable-iff} we obtain a globalizability criterion, which is the first part of our main result. Furthermore, since the globalizability of $\af$ does not imply the local confluence of $(X_M^+,\to)$, it is therefore interesting to find an additional condition that is missing. This is done in \cref{locally-confluent-necessary-and-sufficient-conditions}, which is the second part of our main result. The corresponding condition is \cref{afkxyz-=-xafgyz}, which thus completes the picture.

Finally, in \cref{sec-appl}, the results of~\cref{sec-PA-and-ARS,section-Gsqcup0} are applied to some more specific classes of strong partial monoid actions. First of all, if $M=\{0,1\}$ (the smallest monoid that is not a group), then \cref{afg1-afg((xy)z)-group,af0xyz-af0afgxafhyz,afkxyz-=-xafgyz} reduce to \cref{af0xyz-af0afgxafhyz} that takes the form \cref{af0xyz-=-xaf0yz} and essentially means that $\af_0$ is a morphism of biacts. In particular, in this case, the globalizability of $\af$ is equivalent to the local confluence of $(X_M^+,\to)$ as shown in \cref{spa-01-on-semigroup-globalizable-equiv}. Moreover, if $X$ is an \textit{algebra}, \cref{apf-01-ideais-eh-glob} states that a strong partial action $\af$ of $M=\{0,1\}$ on $X$ admits a globalization in $\act{\alg{\K}}$ if and only if $\ker\af_0$ is an ideal of $X$. It is surprising that the latter is also equivalent to the globalizability of $\af$ seen as a partial action on the multiplicative monoid of $X$, which is of course not true in general (see \cref{globalizable-in-semigroups-iff-in-algebras,non-globalizable-in-algk-globalizable-in-semi}). We then return to the case $M=G^0$, but require that $\dom\af_m$ and $\im\af_m$ are \textit{unital} ideals of $X$ for all $m\in M$. Unlike the group case, this is not enough in general for $\af$ to be globalizable, since the corresponding condition \cref{xyz-in-daf0-implies-xafmyz-in-daf0} may fail (see \cref{Gsqcup0-locally-confluent-equiv-unital-ideal,exemplo-non-globalizable-unitary}). However, for $M=\{0,1\}$ this turns out to be enough (see \cref{pact-unit-ideals-|G|=1-glob}). We finally study strong partial actions $\af$ of an \textit{arbitrary} monoid $M$ on left zero semigroups and null semigroups. In the left zero case, it is easy to see that $\af$ is always globalizable, but the rewriting system $(X_M^+,\to)$ may fail to be locally confluent (see \cref{left-zero-semigroup-globalizable,loc-conf-iff-left-zero-semigroup,exm-X-left-zero-loc-confl,exm-X-left-zero-non-loc-confl}). In the null case, $(X_M^+,\to)$ is always locally confluent, and hence $\af$ is globalizable (see \cref{null-semigroup-loc-conf,null-semigroup-globalizable}).

\section{Preliminaries}\label{sec-prelim}

\subsection{Notations}

    Let $S$ be a semigroup. As in \cite{Clifford-Preston-1}, we use the notations $S^1$ and $S^0$ for the semigroups obtained by adjoining an identity element $1$ and a zero element $0$ to $S$, respectively.% We denote by $S^1:= S \sqcup \{1\}$ and $S^0:=S \sqcup \{0\}$, where $1$ is an identity element, and $0$ is a zero element.

    Let $X$ be a set. Following \cite{Howie-Fundamentals}, we denote by $X^+$ the free semigroup over $X$ and by $X^* = (X^+)^1$ the free monoid over $X$. The elements of $X^+$ are nonempty words with letters from $X$, and we denote the length of a word $w \in X^+$ by $|w|$.

    Throughout the text, $\Set$ will denote the category of sets, $\Sem$ the category of semigroups, and $\alg{\K}$ the category of (associative, not necessarily unital) algebras over a field $\K$.

\subsection{Abstract rewriting systems}

We follow the terminology of \cite{Term-rewriting-and-all-that,Terese03}.

    An \hl{abstract rewriting system} is a pair $(X,\to)$ where $X$ is a set and $\to$ is a binary relation on $X$. We write $x \leftarrow y$ to mean $y \to x$. We denote by $\overset{*}{\to}$ and $\overset{*}{\leftarrow}$ the reflexive transitive closures of $\to$ and $\leftarrow$, respectively, and by $\overset{*}{\leftrightarrow}$ the smallest equivalence relation containing $\to$.

\begin{definicao}
    Let $(X,\to)$ be an abstract rewriting system. Given $x,y \in X$, we say that: %An element $x \in X$ is said to be \hl{reducible} if there exists $y \in X$ such that $x \to y$.

    \begin{enumerate}
        %\item An element $x \in X$ is said to be \hl{reducible} if there exists $z \in X$ such that $x \to z$.

        \item $x$ \hl{reduces} to $y$ or that $y$ is a \hl{reduct} of $x$ if there exists a chain
        $$x \to x_1 \to \dots \to x_n = y;$$

        \item $x$ and $y$ are \hl{joinable} if there exists $z \in X$ such that $x \overset{*}{\rightarrow} z \overset{*}{\leftarrow} y$, in which case we write $x \downarrow y$.
    \end{enumerate}

    An element $x \in X$ is said to be \hl{reducible} if there exists $y \in X$ such that $x$ reduces to $y$.
    %Let $(X,\to)$ be an abstract rewriting system and $x,y \in X$. It is said that $x$ \hl{reduces} to $y$ or that $y$ is a \hl{reduct} of $x$ if there exists a chain $x \to x_1 \to \dots \to x_n \to y$.
\end{definicao}

% \begin{definicao}
%     Let $(X,\to)$ be an abstract rewriting system and $x,y \in X$. It is said that $x$ and $y$ are \hl{joinable} if there exists $z \in X$ such that $x \overset{*}{\rightarrow} z \overset{*}{\leftarrow} y$, in which case we write $x \downarrow y$.
% \end{definicao}

\begin{definicao}
    Let $(X,\to)$ be an abstract rewriting system. An element $w \in X$ is said to be:
    \begin{enumerate}
        \item \hl{in normal form} if $w$ is not reducible;

        \item \hl{locally confluent} if for all $x,y \in X$ such that $x \leftarrow w \to y$, we have $x \downarrow y$.

        %\item \hl{confluent} if for all $x,y \in X$ such that $w \overset{*}{\to} x$ and $w \overset{*}{\to} y$, we have $x \downarrow y$.
    \end{enumerate}
\end{definicao}

\begin{definicao}
    An abstract rewriting system $(X,\to)$ is said to be:
    \begin{enumerate}
        \item \hl{terminating} if there is no infinite chain $x_1 \to x_2 \to \dots$;

        %\item have the \hl{unique normal form property} if every element of $X$ reduces to a unique normal form;

        \item \hl{locally confluent} if every element of $X$ is locally confluent.

        %\item be \hl{confluent} if every element of $X$ is confluent.
    \end{enumerate}
\end{definicao}

\begin{proposicao}\label{loc-conf-terminating-implies-unique-normal-form}
    Let $(X,\to)$ be a locally confluent and terminating abstract rewriting system. If $w,w' \in X$ are in normal form and $w \overset{*}{\leftrightarrow} w'$, then $w = w'$.
\end{proposicao}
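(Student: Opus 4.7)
The plan is to establish the result by first proving Newman's Lemma --- that a terminating and locally confluent rewriting system is \emph{confluent}, meaning that whenever $x \overset{*}{\leftarrow} u \overset{*}{\to} y$ one has $x \downarrow y$ --- and then deriving from it the so-called Church--Rosser property (that $w \overset{*}{\leftrightarrow} w'$ implies $w \downarrow w'$). Once both ingredients are available, the conclusion is essentially immediate: given $w \overset{*}{\leftrightarrow} w'$ with $w,w'$ in normal form, Church--Rosser produces $z \in X$ with $w \overset{*}{\to} z \overset{*}{\leftarrow} w'$, and since neither $w$ nor $w'$ is reducible each of these chains must have length zero, forcing $w = z = w'$.

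For Newman's Lemma the crucial observation is that the termination hypothesis makes $\to$ a well-founded relation on $X$, so one can perform well-founded induction along $\to$. I would fix $u \in X$, assume confluence holds at every proper reduct of $u$, and deduce it at $u$. The only nontrivial case is when both sides of $x \overset{*}{\leftarrow} u \overset{*}{\to} y$ begin with a genuine step, say $u \to u_1 \overset{*}{\to} x$ and $u \to u_2 \overset{*}{\to} y$. Local confluence at $u$ yields $v \in X$ with $u_1 \overset{*}{\to} v \overset{*}{\leftarrow} u_2$. Applying the induction hypothesis at the proper reduct $u_1$ to $x \overset{*}{\leftarrow} u_1 \overset{*}{\to} v$ produces $v' \in X$ with $x \overset{*}{\to} v' \overset{*}{\leftarrow} v$; applying it next at $u_2$ to the resulting pair $v' \overset{*}{\leftarrow} u_2 \overset{*}{\to} y$ yields a common reduct of $v'$ and $y$, which is simultaneously a common reduct of $x$ and $y$.

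For Church--Rosser, I would induct on the length $n$ of a zigzag sequence $w = u_0, u_1, \ldots, u_n = w'$ in which consecutive elements are related by $\to$ or $\leftarrow$, such a sequence existing because $\overset{*}{\leftrightarrow}$ is the smallest equivalence relation containing $\to$. The case $n = 0$ is trivial; for the inductive step, the induction hypothesis provides a common reduct of $u_0$ and $u_{n-1}$, and the single step $u_{n-1} \leftrightarrow u_n$ is absorbed either by one application of confluence (when $u_{n-1} \to u_n$) or trivially by transitivity (when $u_n \to u_{n-1}$).

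The main technical subtlety is confined to Newman's Lemma and consists in correctly organizing the two nested appeals to the well-founded induction hypothesis along the reducts $u_1$ and $u_2$ of $u$. This is precisely the step where termination is genuinely needed: without a well-founded descent along $\to$, local confluence alone does not propagate to global confluence, so the descent is what turns a ``local'' join furnished by the hypothesis of the proposition into the ``global'' one required for unique normal forms.
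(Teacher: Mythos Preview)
Your proof is correct and follows exactly the route the paper takes by citation: the paper simply invokes Newman's Lemma and the uniqueness-of-normal-forms corollary from Baader--Nipkow, and you have spelled out the standard proofs of precisely those two results. There is no substantive difference in approach.
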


\begin{proof}
    It follows from \cite[Corollary 2.1.6, Lemma 2.7.2]{Term-rewriting-and-all-that}.
\end{proof}

\subsection{Partial actions of monoids}

Let $M$ be a monoid with identity $e$. Reformulating \cite[Definitions 2.2 and 2.4]{PartMonoids} we have the following.

\begin{definicao}\label{acao_parcial_em_conjuntos}
Let $X$ be a set. A \hl{strong\footnote{There is a more general notion of a partial action, which is not necessarily strong in the sense that axiom (PA2) has the weaker form $\alpha_m^{-1}(\dom\alpha_n) \subseteq \dom\alpha_{nm}$. In this work we deal only with strong partial actions.} partial action} of $M$ on $X$ is a family $\alpha = \{\alpha_m : \dom\af_m \subseteq X \to X \}_{m \in M}$, such that:
\begin{enumerate}[label=(PA\arabic*)]
    \item\label{AP1 set} $\dom\af_e = X$ and $\alpha_e = id_X$;
    
    \item\label{AP2 set strong} $\alpha_m^{-1}(\dom\alpha_n) = \dom\alpha_{nm} \cap \dom\af_m$, for all $m,n \in M$;
    
    \item\label{AP3 set} $\alpha_n \circ \alpha_m = \alpha_{nm}$ on $\alpha_m^{-1}(\dom\alpha_n)$, for all $m, n \in M$.
\end{enumerate}
\end{definicao}

\begin{rem}\label{m-invertible-implies-afm-1-=-af(m-1)}
    Let $m \in M$ be invertible and $x,y\in X$. Then $x \in \dom\af_m$ and $y = \af_m(x)$ if and only if $y \in \dom\af_{m^{-1}}$ and $\af_{m^{-1}}(y) = x$.
\end{rem}

\begin{definicao}
    Let $X$ be a set. A \hl{global action} of $M$ on $X$ is a strong partial action $\alpha$ of $M$ on $X$ such that $\dom\af_m = X$ for all $m \in M$.
\end{definicao}

When $M$ is a group, it is easy to prove (see, for example, \cite[Proposition 2.4]{Megrelishvili_2004}) that strong partial actions of $M$ in the sense of \cref{acao_parcial_em_conjuntos} are precisely partial actions of $M$ in the sense of \cite[Definition 1.2]{exel_partial_1998}.

In~\cite{khrypchenko2023partial} we defined the notion of a strong partial action and that of a global action of $M$ on an object of a category $\C$ with pullbacks. For $\C\in\{\Sem,\alg{\K}\}$, the definitions reduce to the following. 

%In the category of semigroups or of algebras, the general definitions of partial action and globalization in \cite{khrypchenko2023partial} are reduced to the following.

\begin{definicao}
    Let $X$ be a semigroup (resp. algebra). A \hl{strong partial action} (resp. \hl{global action}) of $M$ on $X$ is a strong partial action (resp. global action) $\alpha$ of $M$ on the underlying set of $X$ such that $\dom\af_m$ is a subsemigroup (resp. subalgebra) of $X$ and $\af_m$ is a semigroup (resp. algebra) homomorphism for all $m \in M$.
\end{definicao}

Let now $\C \in \{\Set,\Sem,\alg{\K}\}$.

\begin{definicao}\label{morphism-between-partial-actions-def}
    Let $\af$ be a strong partial action of $M$ on $X \in \C$ and $\bt$ a global action of $M$ on $Y \in \C$. A morphism $f : X \to Y$ is said to be a \hl{morphism} from $\af$ to $\bt$ if for all $m \in M$ and $x \in \dom\af_m$ we have
    $$\beta_m(f(x)) = f(\af_m(x)).$$

    We denote by $\act{\C}$ the category whose objects are global actions of $M$ on objects of $\C$ and morphisms are as defined above.
\end{definicao}

% \begin{definicao}\label{globalization-def}
%     Let $\af$ be a strong partial action of $M$ on a semigroup (resp. algebra) $X$. A \hl{globalization} of $\af$ is a pair $(\beta,\iota)$ where $\beta$ is a global action of $M$ on a semigroup (resp. algebra) $Y$ and $\iota$ is an injective morphism from $\af$ to $\bt$ such that for all $x, y \in X$ and $m \in M$ with $\beta_m(\iota(x)) = \iota(y)$ we have $x \in \dom\alpha_m$.
% \end{definicao}

\begin{definicao}
    % Let $\af$ be a strong partial action of $M$ on a semigroup (resp. algebra) $X$. A \hl{reflection} of $\af$ in the corresponding category of global actions is a pair $(\beta,\iota)$, where $\beta$ is a global action of $M$ on a semigroup (resp. algebra) and $\iota$ is a morphism from $\af$ to $\bt$ such that for all global actions $\gamma$ of $M$ on a semigroup (resp. algebra) and morphisms $\kappa : \af \to \gamma$ there exists a unique morphism $\kappa' : \bt \to \gamma$ such that $\kappa' \circ \iota = \kappa$.
    Let $\af$ be a strong partial action of $M$ on $X \in \C$. A \hl{reflection} of $\af$ in $\act{\C}$ is a pair $(\beta,\iota)$ with $\beta \in \act{\C}$ and $\iota : \af \to \bt$ such that for all $\gamma \in \act{\C}$ and $\kappa : \af \to \gamma$ there exists a unique $\kappa' : \bt \to \gamma$ satisfying $\kappa' \circ \iota = \kappa$.
\end{definicao}

\begin{definicao}\label{globalization-def}
    % Let $\af$ be a strong partial action of $M$ on a semigroup (resp. algebra) $X$. A \hl{globalization} of $\af$ is a pair $(\beta,\iota)$ where $\beta$ is a global action of $M$ on a semigroup (resp. algebra) $Y$ and $\iota$ is a morphism from $\af$ to $\bt$ such that for all $x, y \in X$ and $m \in M$ with $\beta_m(\iota(x)) = \iota(y)$ we have $x \in \dom\alpha_m$ and $\af_m(x) = y$. If $\alpha$ has a globalization, we say that $\af$ is \hl{globalizable}.
    Let $\af$ be a strong partial action of $M$ on $X \in \C$. A \hl{globalization} of $\af$ (in $\act{\C}$) is a pair $(\beta,\iota)$ with $\beta \in \act{\C}$ and $\iota : \af \to \bt$ such that for all $x, y \in X$ and $m \in M$ with $\beta_m(\iota(x)) = \iota(y)$ we have $x \in \dom\alpha_m$ and $\af_m(x) = y$. If $\alpha$ has a globalization, we say that $\af$ is \hl{globalizable}.

    A globalization $(\beta,\iota)$ of $\af$ is said to be \hl{universal} if for any globalization $(\gamma,\kappa)$ of $\af$ there exists a unique $\kappa' : \bt \to \gamma$ satisfying $\kappa' \circ \iota = \kappa$.
\end{definicao}

\begin{rem}\label{betaiota-glob-implies-iota-injective}
    If $(\beta,\iota)$ is a globalization of $\af$, then $\iota$ is an injective map and $\dom\af_m = \iota^{-1}(\iota(X) \cap \beta_m^{-1}(\iota(X)))$ for all $m \in M$.
\end{rem}

\begin{rem}\label{globalizable-iff-reflection-is-globalization}
    Let $\af$ be a strong partial action of $M$ on an object of $\C$ and $(\beta,\iota)$ a reflection of $\af$ in $\act{\C}$. Then, by \cite[Theorem 4.4]{khrypchenko2023partial}, $\af$ is globalizable if and only if $(\beta,\iota)$ is a globalization of $\af$, in which case $(\beta,\iota)$ is a universal globalization of $\af$.
\end{rem}

\section{Partial actions of monoids and abstract rewriting systems}\label{sec-PA-and-ARS}

In this section we associate a rewriting system to a strong partial action $\af$ of a monoid on a semigroup and give a sufficient condition for $\af$ to be globalizable.

Let $X \in \Sem$, $M$ a monoid, and $\alpha$ a strong partial action of $M$ on $X$. %Recall from \cite[Corollary 4.20]{khrypchenko2023partial} the construction of a reflection of $\alpha$ in the category of global actions of $M$ on semigroups. Consider the coproducts $\coprod_{m \in M} X$ and $\coprod_{(m,n) \in M \times M} \dom\alpha_n$. For each $m, n \in M$, denote the associated inclusion morphisms by
% \begin{align}\label{definicao-morfismos-de-inclusao}
%     u_m : X \rightarrow \coprod_{m \in M} X \text{ and } u_{(m,n)} : \dom\alpha_n \rightarrow \coprod_{(m,n) \in M \times M} \dom\alpha_n.
% \end{align}
% Also consider the morphisms $p, q : \coprod_{(m,n) \in M \times M} \dom\alpha_n \to \coprod_{m \in M} X$ given as follows:
% \begin{align}\label{definicao-p-e-q}
%     p = \coprod_{(m,n) \in M \times M} (u_{mn} \circ \iota_n) \text{ and } q = \coprod_{(m,n) \in M \times M} (u_{m} \circ \alpha_n),
% \end{align}
% and let $c : \coprod_{m \in M} X \to Z$ be a coequalizer of $p$ and $q$ in $\C$. Then $(\beta,\iota)$ is a reflection of $\af$ on the category of global actions of $M$ on semigroups, where $\iota = c \circ u_e$, where $\beta_m$ can be described for each $m \in M$ precisely as the unique morphism such that for all $s,t \in M$
% \begin{equation}\label{global-action-associated-to-coequalizer}
%     \beta_m \circ c \circ u_s = c \circ u_{ms},
% \end{equation}

\begin{definicao}\label{Y-set-theoretic-glob-def}
Let $X_M = (M \times X) /{\approx}$, where $\approx$ is the equivalence generated by the relation $\rightharpoonup$ on $M \times X$ such that
    \begin{align*}
        (m,x) \rightharpoonup (n,y) \iff \exists k \in M : m = nk, x \in \dom\af_k \text{ and } y = \af_k(x).
    \end{align*}
    We write $(m,x) \leftharpoonup (n,y)$ for $(n,y) \rightharpoonup (m,x)$ and denote the $\approx$-equivalence class of $(m,x) \in M \times X$ by $[m,x]$. Recall from \cite{PartMonoids} that the maps $\beta_n : X_M \to X_M$ given by
\begin{align}\label{beta-def}
    \beta_n([m,x]) = [nm,x]
\end{align}
    are well-defined for each $n \in M$ and determine a global action $\bt = \{\bt_m\}_{m \in M}$ of $M$ on $X_M$. Furthermore, the pair $(\beta,\iota)$ is a reflection of $\alpha$ in $\act{\Set}$ and a globalization of $\af$, where $\iota : X \to X_M$ is given by 
     \begin{align}\label{iota-def}
         \iota(x) = [e,x].
     \end{align}
\end{definicao}

    %Recall from \cite{PartMonoids} that $(\beta,\iota)$ is a reflection of $\alpha$ in $\act{\Set}$ and a globalization of $\af$.

\subsection{The reflection of $\af$ in $\act{\Sem}$}

Let $R^\#$ be the congruence on the free semigroup $X_M^+$ over $X_M$ generated by the relation
\begin{align*}
R = \{([m,x][m,y],[m,xy]) : m\in M,x,y\in X\}.
\end{align*}
By abuse of notation, we denote the $R^\#$-congruence class of a word $w\in X_M^+$ by $w$ itself.

For each $n \in M$ consider the semigroup homomorphism $\hat{\bt}_n : X_M^+ \to X_M^+/R^\#$ given on the generators by
\begin{equation}\label{hat-beta-def}
    \hat{\bt}_n([m,x]) = [nm,x].
\end{equation}

\begin{lema}\label{beta-Rsharp-def}
    For all $n \in M$ the semigroup homomorphism $\hat{\bt}_n$ factors through a semigroup homomorphism $\ol{\bt}_n : X_M^+/R^\# \to X_M^+/R^\#$. Moreover, the family $\ol{\beta} = \{\ol\beta_n\}_{n \in M}$ is a global action of $M$ on $X_M^+/R^\#$.
\end{lema}

\begin{proof}
    By \cref{hat-beta-def},
    $$\hat{\beta}_n([m,x][m,y]) = [nm,x][nm,y] R^\# [nm,xy] = \hat{\beta}_n([m,xy]),$$
    so $\hat{\beta}_n$ factors through a semigroup homomorphism $\ol{\bt}_n : X_M^+/R^\# \to X_M^+/R^\#$. Since $\bt$ is a global action, $\ol{\beta}_k \circ \ol{\beta}_n = \ol{\beta}_{kn}$ and $\ol{\beta}_e = id_{X_M^+/R^\#}$ on the generators $[m,x]$ of $X_M^+/R^\#$, so these equalities hold on all of $X_M^+/R^\#$, meaning that $\ol\bt$ is a global action of $M$ on $X_M^+/R^\#$.
%
    % Then observe that the maps $\beta_n$ form a global action of $M$ on $X_M^+/R$. Indeed, by \cref{beta-def}, on the generators we have
    % $$\beta_k(\beta_n([m,x])) = \beta_k([nm,x]) = [knm,x] = \beta_{kn}([m,x])$$
    % and
    % $$\beta_e([m,x]) = [em,x] = [m,x],$$
    % so $\beta_k \circ \beta_n = \beta_{kn}$ for each $k, n \in M$ and $\beta_e = id_{X_M^+/R}$.
\end{proof}

\begin{lema}\label{iota-Rsharp-def}
    The map $\ol{\iota} : X \to X_M^+/R^\#$ given by
    \begin{equation}\label{bar-iota-def}
        \ol{\iota}(x) = [e,x]
    \end{equation}
    is a morphism from $\af$ to $\ol{\beta}$.
\end{lema}

\begin{proof}
    First, observe that $\ol{\iota}$ is a semigroup homomorphism from $X$ to $X_M^+/R^\#$. Indeed, given $x,y \in X$, since $[e,x] [e,y] R^\# [e,xy]$ in $X_M^+$, by \cref{bar-iota-def} we have
    $$\bar{\iota}(x)\bar{\iota}(y) = [e,x][e,y] = [e,xy] = \bar{\iota}(xy).$$
    Furthermore, given $m \in M$ and $x \in \dom\af_m$, we have
    $$\ol{\beta}_m(\ol{\iota}(x)) = \ol{\beta}_m([e,x]) = [m,x] = [e,\af_m(x)] = \ol{\iota}(\af_m(x)),$$
    so $\ol{\iota}$ is a morphism from $\af$ to $\ol{\beta}$.
\end{proof}

\begin{proposicao}\label{reflexao-em-semigrupos-eh-quociente-da-reflexao-em-set}
    The pair $(\ol{\beta},\ol{\iota})$ from \cref{beta-Rsharp-def,iota-Rsharp-def} is a reflection of $\alpha$ in $\act{\Sem}$.
\end{proposicao}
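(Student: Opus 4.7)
The plan is to verify the universal property in \cref{morphism-between-partial-actions-def} directly, bootstrapping from two facts already in hand: the set-theoretic reflection in \cref{Y-set-theoretic-glob-def} and the universal property of the free semigroup $X_M^+$ over $X_M$. Given a global action $\gamma$ of $M$ on a semigroup $Y$ and a morphism $\kappa : \af \to \gamma$ in $\pact{\Sem}$, I need to produce a unique morphism $\kappa' : \beta \to \gamma$ in $\act{\Sem}$ with $\kappa' \circ \iota = \kappa$.

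First I would apply the set-theoretic reflection to the underlying set map of $\kappa$: this yields a unique equivariant set map $\tilde\kappa : X_M \to Y$ extending $\kappa$ along $\iota_{\Set}$. By equivariance, $\tilde\kappa([m,x]) = \gamma_m(\kappa(x))$. Next, since $X_M^+$ is free over $X_M$ in $\Sem$, the set map $\tilde\kappa$ extends uniquely to a semigroup homomorphism $\hat\kappa : X_M^+ \to Y$. The crux is that $\hat\kappa$ respects the defining relations of $R^\#$: for the generating pairs one checks
\[
\hat\kappa([m,x][m,y]) = \gamma_m(\kappa(x))\,\gamma_m(\kappa(y)) = \gamma_m(\kappa(x)\kappa(y)) = \gamma_m(\kappa(xy)) = \hat\kappa([m,xy]),
\]
using that $\gamma_m$ and $\kappa$ are both semigroup homomorphisms. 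Thus $\hat\kappa$ descends to a semigroup homomorphism $\kappa' : X_M^+/R^\# \to Y$.

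It then remains to verify that $\kappa'$ is a morphism in $\act{\Sem}$ and fits the required factorization. Equivariance $\kappa' \circ \beta_n = \gamma_n \circ \kappa'$ need only be checked on the generators $[m,x]$, where it reduces to $\gamma_{nm}(\kappa(x)) = \gamma_n(\gamma_m(\kappa(x)))$; since both sides of the equivariance identity are semigroup homomorphisms, this suffices. The factorization $\kappa' \circ \iota = \kappa$ is immediate from $\kappa'([e,x]) = \gamma_e(\kappa(x)) = \kappa(x)$. For uniqueness, any semigroup homomorphism $\kappa'' : X_M^+/R^\# \to Y$ that is equivariant and satisfies $\kappa'' \circ \iota = \kappa$ is forced on the generating set by $\kappa''([m,x]) = \kappa''(\beta_m(\iota(x))) = \gamma_m(\kappa(x))$, and thus agrees with $\kappa'$ throughout.

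The only step that requires any care is the descent of $\hat\kappa$ through $R^\#$, and even that is essentially automatic: the relation $R$ was built so that the would-be formula $[m,x] \mapsto \gamma_m(\kappa(x))$ is compatible with semigroup multiplication, so the verification is a one-line calculation. The rest of the argument is bookkeeping between the set-theoretic reflection and the free-semigroup functor, so I expect no genuine obstacle.
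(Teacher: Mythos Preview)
Your proposal is correct and follows essentially the same approach as the paper's own proof: both start from the set-theoretic reflection to obtain the formula $[m,x]\mapsto\gamma_m(\kappa(x))$, extend via the universal property of $X_M^+$, verify compatibility with the generating relation $R$ using that $\gamma_m$ and $\kappa$ are homomorphisms, and then check equivariance and uniqueness on generators. The only differences are cosmetic (you name the intermediate maps $\tilde\kappa$, $\hat\kappa$ rather than overloading $\kappa'$).
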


\begin{proof}
    Take $Z \in \Sem$, $\gamma$ a global action of $M$ on $Z$ and $\kappa$ a morphism from $\af$ to $\gamma$.

    Since $(\beta,\iota)$ is a reflection of $\alpha$ in $\act{\Set}$, there exists a unique morphism $\kappa' : \beta \to \gamma$ in $\act{\Set}$ such that $\kappa' \circ \iota = \kappa$. Then, for each $[m,x] \in X_M$, using \cref{beta-def,iota-def} we have
    \begin{align*}
        \kappa'([m,x]) = \kappa'(\beta_m([e,x])) = \kappa'(\beta_m(\iota(x))) = \gamma_m(\kappa'(\iota(x))) = \gamma_m(\kappa(x)).
    \end{align*}
    By the universal property of $X_M^+$, there exists a unique semigroup homomorphism $\hat{\kappa} : X_M^+ \to Z$ given on the generators by
    \begin{equation}\label{hat-kappa-def}
        \hat{\kappa}([m,x]) = \kappa'([m,x]) = \gamma_m(\kappa(x)).
    \end{equation}
    Since $\gamma_m$, $\kappa$ and $\hat{\kappa}$ are semigroup homomorphisms, by \cref{hat-kappa-def} we have
    \begin{align*}
        \hat{\kappa}([m,x][m,y]) &= \gamma_m(\kappa(x)) \gamma_m(\kappa(y)) = \gamma_m(\kappa(xy)) = \hat{\kappa}([m,xy]),
    \end{align*}
    so $\hat{\kappa}$ factors through a homomorphism $\ol{\kappa} : X_M^+/R^\# \to Z$. By \cref{hat-kappa-def}, observe that $\ol{\kappa}$ is given on the generators $[m,x]$ by
    \begin{equation}\label{bar-kappa-def}
        \ol{\kappa}([m,x]) = \gamma_m(\kappa(x)).
    \end{equation}

    Since $\gamma$ is an action, by \cref{bar-iota-def,bar-kappa-def} we have
    $$\ol{\kappa}(\ol{\iota}(x)) = \ol{\kappa}([e,x]) = \gamma_e(\kappa(x)) = \kappa(x)$$
    for each $x \in X$, so $\ol{\kappa} \circ \ol{\iota} = \kappa$. Moreover, for each $m \in M$, since $\gamma$ is an action, by \cref{bar-kappa-def,beta-Rsharp-def}, on the generators of $X_M^+/R^\#$ we have
    $$\gamma_m(\ol{\kappa}([n,x])) = \gamma_m(\gamma_n(\kappa(x))) = \gamma_{mn}(\kappa(x)) = \ol{\kappa}([mn,x]) = \ol{\kappa}(\ol{\beta}_m([n,x])),$$
    so $\gamma_m \circ \ol{\kappa} = \ol{\kappa} \circ \beta_m$ and, thus, $\ol{\kappa}$ is a morphism from $\ol{\bt}$ to $\gamma$ in $\act{\Sem}$.

    For the uniqueness of $\ol{\kappa}$, let $\mu : \ol\bt \to \gamma$ be a morphism in $\act{\Sem}$ such that
    \begin{align}\label{kappa''-circ-iota-=-kappa}
        \mu \circ \ol{\iota} = \kappa.
    \end{align}
    Then, by \cref{kappa''-circ-iota-=-kappa,bar-kappa-def,bar-iota-def,beta-Rsharp-def}, on the generators of $X_M^+/R^\#$ we have
    \begin{align*}
        \mu([m,x]) &= \mu(\ol{\beta}_m([e,x])) = \gamma_m(\mu([e,x])) = \gamma_m(\mu(\ol\iota(x))) \\
        &= \gamma_m(\kappa(x)) = \ol\kappa([m,x]),
    \end{align*}
    %$$\mu([m,x]) = \mu(\beta_m([e,x])) = \gamma_m(\mu([e,x])) = \gamma_m(\mu(\iota(x))) = \gamma_m(\kappa(x)) = \kappa'([m,x]),$$
    so $\mu = \ol\kappa$.
\end{proof}

\subsection{The abstract rewriting system associated with $\af$}

\begin{definicao}\label{definicao-rewriting-system-arrow-on-SY}
    Define the following abstract rewriting system on $X_M^+$:
		\begin{align*}%\label{rewriting-system-arrow-on-SY}
			u[m,x][m,y]v\to u[m,xy]v,\text{ for all } u,v\in X_M^*,\ m\in M,\ x,y\in X.
		\end{align*}
\end{definicao}

\begin{rem}\label{Rsharp-=-leftrightarrowstar}
    Observe that $\to$ is the smallest left and right compatible relation containing $R$ (that is, $R^c$ in the notation of \cite[pp.\,25--26]{Howie-Fundamentals}), and $\overset{*}{\leftrightarrow}$ is the smallest equivalence relation containing $\to$ (that is, $\to^e$ in the notation of \cite[p.\,21]{Howie-Fundamentals}). Then by \cite[Proposition 1.5.8]{Howie-Fundamentals} we have $R^\# = \overset{*}{\leftrightarrow}$.
\end{rem}

%Since $\to$ is the smallest left and right compatible relation containing $R$, by \cite[Proposition 1.5.8]{Howie-Fundamentals} we have $R^\# = \overset{*}{\leftrightarrow}$.

%Let $\overset{*}{\leftrightarrow}$ be the smallest equivalence relation containing $\to$.

% \begin{proposicao} 
% 	The congruence $R^\#$ on $X_M^+$ is exactly $\overset{*}{\leftrightarrow}$.
% \end{proposicao}

% \begin{proof}
%     It follows from \cite[Proposition 1.5.8]{Howie-Fundamentals}, since $\to$ is the smallest left and right compatible relation containing $R$.
% %
%     % Since $R$ is a congruence on $X_M^+$ generated by the pairs $([m,x][m,y],[m,xy])$, it contains any pair of the form $(u[m,x][m,y]v , u[m,xy]v)$ where $u, v \in X_M^*$, $m \in M$ and $x,y \in X$. Hence, $R$ contains $\overset{*}{\leftrightarrow}$.
% %
%     % On the other hand, we have $[m,x][m,y] \to [m,xy]$ for all $x,y \in X$ and $m \in M$, so the generators of $R$ are contained in $\overset{*}{\leftrightarrow}$. Since $\to$ is closed under multiplication on the left and on the right, by \cite[Proposition 1.5.8]{Howie-Fundamentals} the equivalence $\overset{*}{\leftrightarrow}$ is a congruence. Thus, $R \subseteq \overset{*}{\leftrightarrow}$, so $R = \overset{*}{\leftrightarrow}$.
% \end{proof}

\begin{rem}
    Consider the map $\vf : (M \times X)^+ \to X_M^+$ defined on the generators by $\vf(m,x) = [m,x]$, the abstract rewriting system $\to'$ on $(M\times X)^+$ determined by $(m,x)(m,y) \to' (m,xy)$ for all $m \in M$ and $x,y \in X$, and $\to'' = (\ker \vf) \circ \to' \circ (\ker \vf)$. Then $X_M^+ \cong (M\times X)^+/\ker\vf$ and $X_M^+/R^\# \cong (M \times X)^+/\overset{*}{\leftrightarrow}\!\!{}''$.
\end{rem}

\begin{proposicao}\label{locally-confluent-iff-2-3-letter-words-are-locally-confluent}
    The rewriting system $(X_M^+,\to)$ is locally confluent if and only if the words $w \in X_M^+$ with $|w| \in \{2,3\}$ are locally confluent.
\end{proposicao}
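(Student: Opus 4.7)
The forward direction is immediate: local confluence of the whole system specializes to words of length $2$ and $3$. The substantive direction is the converse, and my plan is to carry out a standard overlap analysis of a peak $u \leftarrow w \to v$ in an arbitrary word $w = a_1 \dots a_n \in X_M^+$.

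Each rewriting step is specified by a position $i \in \{1,\dots,n-1\}$ together with a choice of representatives $m \in M$ and $x,y \in X$ with $a_i = [m,x]$ and $a_{i+1} = [m,y]$, producing the merged letter $[m,xy]$ at that position. Labelling the two steps of the peak by positions $i \leq j$, I would split into three cases. If $j \geq i+2$, the two pairs of letters are disjoint and the rewrites commute: applying both in either order yields a common reduct $z$, so $u \to z \leftarrow v$ directly. If $j = i+1$, both steps act within the length-$3$ sub-word $a_i a_{i+1} a_{i+2}$; writing $w = p \cdot a_i a_{i+1} a_{i+2} \cdot q$ for appropriate $p,q \in X_M^*$, the hypothesis gives a common reduct $z'$ of the two local rewrites, and substituting into the context yields $u \overset{*}{\to} p z' q \overset{*}{\leftarrow} v$. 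If $j = i$, both steps act on the same length-$2$ sub-word $a_i a_{i+1}$, and the hypothesis on length-$2$ words supplies the needed joinability, which lifts to $w$ exactly as before.

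The point I would emphasize, and the main subtlety, is that the third case is not vacuous. A letter $a \in X_M$ may admit distinct representatives $(m,x),(n,x') \in M \times X$ identified by $\approx$, and when this happens for both $a_i$ and $a_{i+1}$, two rewrites at the same pair of positions produce different single letters $[m,xy]$ and $[n,x'y']$; their joinability is a genuine assumption. This is precisely why local confluence of length-$2$ words must be included in the hypothesis alongside that of length-$3$ words. Beyond recognizing this subtlety, I do not anticipate any serious technical obstacle.
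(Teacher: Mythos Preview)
Your proposal is correct and follows essentially the same approach as the paper: a case split on the relative positions $i \leq j$ of the two rewrite steps, handling the disjoint case ($j \geq i+2$) by commuting the rewrites, the overlapping case ($j = i+1$) by invoking the length-$3$ hypothesis on the sub-word $a_i a_{i+1} a_{i+2}$, and the coincident case ($j=i$) by invoking the length-$2$ hypothesis. Your emphasis that the $j=i$ case is genuinely non-trivial because of multiple representatives is exactly right and matches what the paper does; in fact the paper observes (in a footnote) that in this case the two reducts $[m,xy]$ and $[n,x'y']$, being joinable one-letter words in normal form, must actually be equal.
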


\begin{proof}
    %The abstract rewriting system $(X_M^+,\to)$ may be regarded as the string rewriting system whose alphabet is $X_M$ and rewriting rules are $\{[m,x][m,y] \to [m,xy] : m \in M, x,y \in X\}$
    It is a direct consequence of the Critical Branching Lemma, since the critical branchings of $(X_M^+,\to)$ arise exclusively from reductions of words of lengths two and three.
\end{proof}

\begin{lema}\label{to-is-terminating}
    The rewriting system $(X_M^+,\to)$ is terminating.
\end{lema}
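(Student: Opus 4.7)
The plan is to use a very simple well-founded measure, namely the length function $|\cdot|:X_M^+\to\N$. By inspection of \cref{definicao-rewriting-system-arrow-on-SY}, every single rewriting step $u[m,x][m,y]v\to u[m,xy]v$ replaces two consecutive letters of the word by a single letter, so the length of the word decreases by exactly one. Thus if $w_1\to w_2$, then $|w_2|=|w_1|-1$.

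Consequently, any chain $w_1\to w_2\to w_3\to\dots$ starting at a word $w_1$ of length $n$ must terminate after at most $n-1$ steps, since lengths are nonnegative integers (or, equivalently, since a word of length $1$ is in normal form). Hence there is no infinite reduction chain, which is precisely the definition of $(X_M^+,\to)$ being terminating.

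There is no real obstacle here: the rule is length-reducing, so termination is immediate from the well-foundedness of $(\N,<)$. The only thing to be careful about is distinguishing between length in $X_M^+$ and the $R^\#$-congruence class notation from the previous subsection; since the rewriting system is defined on $X_M^+$ itself (not on the quotient), the length is a genuine function $X_M^+\to\N$ and the argument goes through without any subtlety.
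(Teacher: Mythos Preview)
Your proof is correct and takes essentially the same approach as the paper: the paper's proof is the one-line observation that $w\to w'$ implies $|w'|<|w|$, which is exactly the length-decreasing argument you spell out.
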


\begin{proof}
    %The length of any word $[m_1,x_1][m_2,x_2]\dots[m_k,x_k]$ in $X_M^+$ will always decrease through $\to$. Since the minimum length of an element of $X_M^+$ is $1$, any chain must end.
    It follows from the fact that $w \to w' \implies |w'| < |w|$.
\end{proof}

\begin{corolario}\label{loc-conf-implies-unique-normal-form}
    Assume that $(X_M^+,\to)$ is locally confluent. If $w,w' \in X_M^+$ are in normal form and $w \overset{*}{\leftrightarrow} w'$, then $w = w'$.
\end{corolario}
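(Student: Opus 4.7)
The plan is to observe that this corollary is essentially an immediate consequence of two results already established in the excerpt, so no new technical content is required — only a direct citation and combination.

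First, I would note that \cref{loc-conf-terminating-implies-unique-normal-form} gives exactly the desired conclusion, but under two hypotheses: local confluence \emph{and} termination of the abstract rewriting system in question. Local confluence of $(X_M^+,\to)$ is assumed in the statement of the corollary, so the only remaining ingredient is termination.

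Next, I would invoke \cref{to-is-terminating}, which asserts that $(X_M^+,\to)$ is terminating (the proof being that each rewriting step strictly decreases word length). With both hypotheses of \cref{loc-conf-terminating-implies-unique-normal-form} now verified for the rewriting system $(X_M^+,\to)$, applying that proposition to $w, w' \in X_M^+$ in normal form with $w \overset{*}{\leftrightarrow} w'$ yields $w = w'$, which is exactly the claim.

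There is no real obstacle here — the corollary is a direct specialization of the general fact about abstract rewriting systems (local confluence plus termination implies uniqueness of normal forms) to the specific rewriting system $(X_M^+,\to)$ introduced in \cref{definicao-rewriting-system-arrow-on-SY}. The proof can be written in one or two lines.
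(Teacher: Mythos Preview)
Your proposal is correct and matches the paper's own proof exactly: the paper simply writes ``It follows from \cref{to-is-terminating,loc-conf-terminating-implies-unique-normal-form}.''
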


\begin{proof}
    It follows from \cref{to-is-terminating,loc-conf-terminating-implies-unique-normal-form}.
\end{proof}

\begin{proposicao}\label{unique-normal-form-implies-glob}
    % If $(X_M^+,\to)$ has the unique normal form property, then $(\beta,\iota)$ is a universal globalization of $\alpha$.
    % Suppose that for all $m \in M$ and $x,y \in X$ with $[m,x] \overset{*}{\leftrightarrow} [e,y]$ we have $[m,x] = [e,y]$. Then the reflection $(\beta,\iota)$ from \cref{reflexao-em-semigrupos-eh-quociente-da-reflexao-em-set} is a universal globalization of $\alpha$ in $\act{\Sem}$.
    The reflection $(\ol\beta,\ol\iota)$ from \cref{reflexao-em-semigrupos-eh-quociente-da-reflexao-em-set} is a universal globalization of $\alpha$ in $\act{\Sem}$ if and only if for all $m \in M$ and $x,y \in X$ with $[m,x] \overset{*}{\leftrightarrow} [e,y]$ we have $[m,x] = [e,y]$.
\end{proposicao}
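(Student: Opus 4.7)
The plan is to reduce everything to pure translation: the two sides of the biconditional are the same condition phrased in the language of $\act{\Sem}$ on the one hand and of the rewriting system $(X_M^+,\to)$ on the other. First, I would invoke Remark~\ref{globalizable-iff-reflection-is-globalization} to replace "$(\beta,\iota)$ is a universal globalization" by "$(\beta,\iota)$ is a globalization"; so by Definition~\ref{globalization-def} the left-hand side becomes the statement that for all $m \in M$ and $x,y \in X$, the equality $\beta_m(\iota(x)) = \iota(y)$ in $X_M^+/R^\#$ forces $x \in \dom\af_m$ and $\af_m(x)=y$.

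Next, I would translate the hypothesis. By \cref{beta-def,iota-def}, $\beta_m(\iota(x)) = [m,x]$ and $\iota(y) = [e,y]$ in $X_M^+/R^\#$, so $\beta_m(\iota(x)) = \iota(y)$ reads $[m,x]\,R^\#\,[e,y]$, which by Remark~\ref{Rsharp-=-leftrightarrowstar} is the same as $[m,x] \overset{*}{\leftrightarrow} [e,y]$ in $(X_M^+,\to)$.

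Then I would translate the conclusion. As recalled in Definition~\ref{Y-set-theoretic-glob-def}, $(\beta,\iota)$ is already a globalization of $\af$ in $\act{\Set}$, so for the set-theoretic global action on $X_M$ we have the biconditional "$[m,x] = [e,y]$ in $X_M$ if and only if $x \in \dom\af_m$ and $\af_m(x) = y$". Hence the conclusion "$x \in \dom\af_m$ and $\af_m(x) = y$" is equivalent to the equality of the single-letter words $[m,x]$ and $[e,y]$, i.e., to $[m,x]=[e,y]$ in $X_M$.

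Combining the two translations, the semigroup-theoretic globalization property becomes exactly the statement: for all $m\in M$, $x,y\in X$ with $[m,x] \overset{*}{\leftrightarrow} [e,y]$, we have $[m,x]=[e,y]$. This gives both directions of the proposition simultaneously. There is no real obstacle beyond bookkeeping; the only delicate point is that "$[m,x]=[e,y]$" must be read as equality in $X_M$ (equivalently, as equality of length-one words in $X_M^+$), not as equality of their $R^\#$-classes, for otherwise the condition would be tautological in view of Remark~\ref{Rsharp-=-leftrightarrowstar}.
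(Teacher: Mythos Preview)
Your proposal is correct and follows essentially the same approach as the paper: translate $\beta_m(\iota(x))=\iota(y)$ into $[m,x]\overset{*}{\leftrightarrow}[e,y]$ via \cref{Rsharp-=-leftrightarrowstar}, translate $x\in\dom\af_m$ and $\af_m(x)=y$ into $[m,x]=[e,y]$ in $X_M$ using that the set-theoretic $(\beta,\iota)$ is a globalization, and conclude via \cref{globalizable-iff-reflection-is-globalization,globalization-def}. Your explicit remark that the equality $[m,x]=[e,y]$ must be read in $X_M$ rather than in $X_M^+/R^\#$ is a welcome clarification.
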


\begin{proof}

    Given $m \in M$ and $x,y \in X$, by \cref{Rsharp-=-leftrightarrowstar} we have $\ol\beta_m(\ol\iota(x)) = \ol\iota(y)$ if and only if $[m,x] \overset{*}{\leftrightarrow} [e,y]$. Moreover, since $(\beta,\iota)$ from \cref{Y-set-theoretic-glob-def} is a globalization of $\af$ in $\act{\Set}$, by \cref{globalization-def,morphism-between-partial-actions-def} we have $[m,x] = [e,y]$ in $X_M$ if and only if $x \in \dom\af_m$ and $\af_m(x) = y$. Hence, the result follows from \cref{globalizable-iff-reflection-is-globalization,globalization-def}.
\end{proof}

\begin{corolario}\label{locally-confluent-implies-glob}
    If $(X_M^+,\to)$ is locally confluent, then $\alpha$ is globalizable.% $(\beta,\iota)$ is a universal globalization of $\alpha$.
\end{corolario}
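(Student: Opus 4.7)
The plan is to combine \cref{unique-normal-form-implies-glob} with \cref{loc-conf-implies-unique-normal-form}: the former reduces globalizability to checking that whenever $[m,x]\overset{*}{\leftrightarrow}[e,y]$ one has $[m,x]=[e,y]$, and the latter guarantees exactly this kind of uniqueness, \emph{provided} the two words involved are in normal form.

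First I would observe that the generators $[m,x]\in X_M^+$ are single-letter words, so by inspection of the reduction rule in \cref{definicao-rewriting-system-arrow-on-SY} (which requires a subword of the form $[m,x][m,y]$ of length at least two) they cannot be reduced. Consequently, $[m,x]$ and $[e,y]$ are both in normal form for any $m\in M$ and $x,y\in X$. Assuming local confluence of $(X_M^+,\to)$, \cref{loc-conf-implies-unique-normal-form} then yields that $[m,x]\overset{*}{\leftrightarrow}[e,y]$ forces $[m,x]=[e,y]$.

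Finally, I would invoke \cref{unique-normal-form-implies-glob} to conclude that $(\beta,\iota)$ is a (universal) globalization of $\af$ in $\act{\Sem}$, so in particular $\af$ is globalizable in the sense of \cref{globalization-def}.

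There is essentially no obstacle here: the work was already done in \cref{unique-normal-form-implies-glob,loc-conf-implies-unique-normal-form}, and the only small point to verify explicitly is that single-letter words are in normal form, which is immediate from the shape of the rewriting rule.
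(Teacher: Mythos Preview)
Your proof is correct and follows exactly the same route as the paper: invoke \cref{loc-conf-implies-unique-normal-form,unique-normal-form-implies-glob} together with the observation that single-letter words $[m,x]$ are in normal form.
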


\begin{proof}
    % By hypothesis and \cref{to-is-terminating} we have that $(X_M^+,\to)$ is locally confluent and terminating, so it follows from \cite[Theorem 1.2.1, Theorem 1.2.2]{Terese03} that every element in $X_M^+$ has a unique normal form. Hence, the result follows by \cref{unique-normal-form-implies-glob}.
    
    %By hypothesis and \cref{to-is-terminating} we have that $(X_M^+,\to)$ is locally confluent and terminating, so it follows from \cite[Fact 2.1.7, Lemma 2.7.2]{Term-rewriting-and-all-that} that every element in $X_M^+$ has at most one normal form. Hence, $(\beta,\iota)$ is a universal globalization of $\af$ by \cref{unique-normal-form-implies-glob}.
    It follows from \cref{loc-conf-implies-unique-normal-form,unique-normal-form-implies-glob} and the fact that the words $[m,x]$ are in normal form for all $m \in M$ and $x \in X$.
\end{proof}

\begin{proposicao}
    If $\af$ is a global action, then $(X_M^+,\to)$ is locally confluent.
\end{proposicao}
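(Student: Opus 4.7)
The plan is to invoke \cref{locally-confluent-iff-2-3-letter-words-are-locally-confluent} and verify local confluence on words of length $2$ and $3$. The crucial observation for a global action is that $\dom\af_k = X$ for every $k \in M$, so taking $n = e$ and $k = m$ in the definition of $\rightharpoonup$ yields $(m,x) \rightharpoonup (e, \af_m(x))$, and therefore $[m,x] = [e, \af_m(x)]$ for every $(m,x) \in M \times X$. Thus $\iota : X \to X_M$, $x \mapsto [e,x]$, is surjective. Since $(\af, \id_X)$ is trivially a globalization of $\af$ in $\act{\Set}$, by \cref{globalizable-iff-reflection-is-globalization} the reflection $(\beta,\iota)$ from \cref{Y-set-theoretic-glob-def} is also a globalization, and \cref{betaiota-glob-implies-iota-injective} then forces $\iota$ to be injective. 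Hence $\iota$ is a bijection.

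For a length-$2$ word $w = ab \in X_M^+$, I would write $a = [e,x]$ and $b = [e,y]$ with $x,y \in X$ uniquely determined. Any reduct of $w$ arises by choosing representatives $a = [m_1, x_1']$ and $b = [m_1, y_1']$ sharing a common first coordinate $m_1$, producing $w_1 = [m_1, x_1' y_1']$. Injectivity of $\iota$ forces $\af_{m_1}(x_1') = x$ and $\af_{m_1}(y_1') = y$, and since $\af_{m_1}$ is a semigroup homomorphism on $X$,
\[
w_1 = [e, \af_{m_1}(x_1' y_1')] = [e, \af_{m_1}(x_1')\af_{m_1}(y_1')] = [e, xy].
\]
So every reduct of a length-$2$ word equals $[e, xy]$, settling local confluence at length $2$ (in fact, yielding uniqueness of the reduct).

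For a length-$3$ word, the previous step lets me write $w = [e,x][e,y][e,z]$ uniquely, and the only two positions at which a reduction can begin produce the unique reducts $w_1 = [e, xy][e, z]$ and $w_2 = [e, x][e, yz]$ (by the length-$2$ analysis applied to each position). Applying that analysis once more, both $w_1$ and $w_2$ reduce to a single letter, namely $[e,(xy)z]$ and $[e, x(yz)]$ respectively, which coincide by associativity in $X$. Thus $w_1 \downarrow w_2$, and by \cref{locally-confluent-iff-2-3-letter-words-are-locally-confluent} the rewriting system $(X_M^+,\to)$ is locally confluent. I anticipate no real obstacle: the only subtle point is that different choices of representatives could \emph{a priori} produce different reducts, but the bijectivity of $\iota$ paired with the homomorphism property of each $\af_m$ rules this out at length $2$, and the length-$3$ case then reduces to associativity in $X$.
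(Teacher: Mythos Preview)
Your proof is correct and follows essentially the same route as the paper: reduce to words of length $2$ and $3$ via \cref{locally-confluent-iff-2-3-letter-words-are-locally-confluent}, use $[m,x]=[e,\af_m(x)]$ together with injectivity of $\iota$ to show that at length $2$ all reducts coincide with $[e,xy]$, and then handle length $3$ by reducing both branches to $[e,xyz]$. The only cosmetic difference is that you justify injectivity of $\iota$ by passing through the trivial globalization $(\af,\id_X)$ and \cref{globalizable-iff-reflection-is-globalization}, whereas the paper already records in \cref{Y-set-theoretic-glob-def} that $(\beta,\iota)$ is a globalization of any strong partial action in $\act{\Set}$, so injectivity of $\iota$ is available directly from \cref{betaiota-glob-implies-iota-injective} without the detour.
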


\begin{proof}
    We show that words of length $2$ or $3$ in $(X_M^+,\to)$ are locally confluent and apply \cref{locally-confluent-iff-2-3-letter-words-are-locally-confluent}. Let $w_1 \leftarrow w \to w_2$.

    \textit{Case 1}. $|w| = 2$. Then there exist $m,n \in M$ and $x,x',y,y'\in X$ with $w = [m,x][m,y] = [n,x'][n,y']$, $w_1 = [m,xy]$ and $w_2 = [n,x'y']$. Since $[m,x] = [n,x']$ and $[m,y] = [n,y']$, we have $\af_m(x) = \af_n(x')$ and $\af_m(y) = \af_n(y')$. Then
    $$w_1 = [e,\af_m(xy)] = [e,\af_m(x)\af_m(y)] = [e,\af_n(x')\af_n(y')] = [e,\af_n(x'y')] = w_2.$$
    Thus, $w$ is locally confluent.

    \textit{Case 2}. $|w| = 3$. Then, without loss of generality, we can assume that there exist $m,n \in M$ and $x,y,y',z \in X$ with $w = [m,x][m,y][n,z] = [m,x][n,y'][n,z]$, $w_1 = [m,xy][n,z]$ and $w_2 = [m,x][n,y'z]$. Since $[m,y] = [n,y']$ we have $\af_m(y) = \af_n(y')$. Then
    \begin{align*}
    w_1 &= [e,\af_m(xy)][e,\af_n(z)] \to [e,\af_m(xy)\af_n(z)] = [e,\af_m(x)\af_m(y)\af_n(z)] \\
    &= [e,\af_m(x) \af_n(y')\af_n(z)] = [e,\af_m(x) \af_n(y'z)] \leftarrow [e,\af_m(x)][e,\af_n(y'z)] = w_2.
    \end{align*}
    Thus, $w$ is locally confluent.
\end{proof}

The converse of \cref{locally-confluent-implies-glob} is false, as the following example illustrates.

\begin{exemplo}\label{exemplo-non-locally-confluent-but-globalizable}
    % Let $G$ be the group $\{1,g\}$, $M = G^0$ and $Z$ the multiplicative semigroup $\Z \times \Z$. Define an action $\bt$ of $M$ on $Z$ where $\bt_g$ and $\bt_0$ are given by
    % $$\bt_g((a,b)) = (b,a) \ \ \text{and} \ \ \bt_0((a,b)) = (1,1).$$

    % Let $X = 2\Z \times 2\Z \subseteq Z$ and consider the restriction $\af$ of $\bt$ to $X$. Observe that $\dom\af_g = X$ and $\dom\af_0 = \emptyset$.

    % Now, take $x = (2,0)$ and $y = (0,2)$. Then, in $X_M^+$,
    % $$[0,x][0,y] \to [0,xy] = [0,(0,0)]$$
    % and
    % $$[0,x][0,y] = [0,x][0g,y] = [0,x][0,\af_g(y)] \to [0,x\af_g(y)] = [0,(4,0)],$$
    % but $[0,(0,0)] \neq [0,(4,0)]$, because $[0,(0,0)] = \{(0,(0,0))\}$. Hence, the rewriting system associated with $\af$ is not locally confluent.
    Let $G$ be the group $\{1,g\}$, $M = G^0$ and $X$ the multiplicative semigroup $2\Z \times 2\Z$. Consider the strong partial action $\af$ of $M$ on $X$ where $\dom\af_g = X$, $\dom\af_0 = \emptyset$ and $\af_g : X \to X$ is given by $\af_g((a,b)) = (b,a)$.
    
    Observe that $\af$ has a globalization given by the pair $(\beta,\iota)$, where $\bt$ is the action of $M$ on $Z = \Z \times \Z$ with $\bt_g$ and $\bt_0$ given by
    $$\bt_g((a,b)) = (b,a) \ \ \text{and} \ \ \bt_0((a,b)) = (1,1),$$
    and $\iota : X \to Z$ is the inclusion map.

    Now, take $x = (2,0)$ and $y = (0,2)$ in $X$. Then, in $X_M^+$,
    \begin{align*}
        [0,x][0,y] &\to [0,xy] = [0,(0,0)]\text{ and, on the other hand,} \\
        [0,x][0,y] &= [0,x][0g,y] = [0,x][0,\af_g(y)] \to [0,x\af_g(y)] = [0,(4,0)],
    \end{align*}
    % $$[0,x][0,y] \to [0,xy] = [0,(0,0)]$$
    % $$[0,x][0,y] = [0,x][0g,y] = [0,x][0,\af_g(y)] \to [0,x\af_g(y)] = [0,(4,0)],$$
    but $[0,(0,0)] \neq [0,(4,0)]$, because $[0,(0,0)] = \{(0,(0,0))\}$. Hence, since $[0,(0,0)]$ and $[0,(4,0)]$ are in normal form, the rewriting system associated with $\af$ is not locally confluent.
\end{exemplo}

\section{Partial actions of $G^0$ on semigroups}\label{section-Gsqcup0}

Now we move to the main part of the paper, where we restrict ourselves to the case $M=G^0$ with $G$ being a group. Thus, we are going to study the strong partial actions $\af$ of $G^0$ on semigroups, and our purpose is to characterize the globalizability of $\af$ when its domains and images are ideals. 

Fix $X \in \Sem$, $G$ a group, $M = G^0$ and $\alpha$ a strong partial action of $M$ on $X$. Denote $\im\af_m=\af_m(\dom\af_m)$ for all $m\in M$.

\subsection{The $\approx$-equivalence classes of $M \times X$}

Let $X_M$ be as in \cref{Y-set-theoretic-glob-def}. Recall that the elements of $X_M$ are the $\approx$-equivalence classes of $M \times X$. In this subsection, we give a full description of these equivalence classes.

%In the following lemmas, assume that $M$ is the monoid $G^0$, where $G$ is a group.

\begin{lema}\label{imaf0-is-invariant-by-all-m}
    For each $m \in M$ we have $\im\af_0 \subseteq \dom\af_m$ and $\af_m(x) = x$ for all $x \in \im\af_0$. 
\end{lema}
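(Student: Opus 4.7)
The plan is to exploit the key absorbing property of the zero element, namely $m\cdot 0 = 0$ in $M = G^0$ for every $m \in M$, and apply the strong partial action axioms \ref{AP2 set strong} and \ref{AP3 set} with the roles chosen appropriately. Since $\af_0^{-1}$ and $\af_0$ are inverse bijections between the domain of $\af_0$ and its image (after the first step proves $\im\af_0 \subseteq \dom\af_0$), both claims will drop out almost immediately.

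First I would fix $m \in M$ and apply axiom \ref{AP2 set strong} in the form where the ``$m$'' of the axiom is taken to be $0$ and the ``$n$'' of the axiom is taken to be $m$. This gives
\[
\af_0^{-1}(\dom\af_m) = \dom\af_{m\cdot 0} \cap \dom\af_0 = \dom\af_0 \cap \dom\af_0 = \dom\af_0,
\]
using that $m\cdot 0 = 0$. In particular, this says $\af_0(\dom\af_0) \subseteq \dom\af_m$, i.e.\ $\im\af_0 \subseteq \dom\af_m$, which is the first claim.

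Next I would pick an arbitrary $x \in \im\af_0$, write $x = \af_0(y)$ for some $y \in \dom\af_0$, and apply axiom \ref{AP3 set} with the same choice of roles. Since $y \in \dom\af_0 = \af_0^{-1}(\dom\af_m)$, we have
\[
\af_m(x) = \af_m(\af_0(y)) = \af_{m\cdot 0}(y) = \af_0(y) = x,
\]
again because $m\cdot 0 = 0$. This establishes the second claim.

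There is no real obstacle here; the proof is a one-line application of \ref{AP2 set strong} and \ref{AP3 set} combined with the absorbing behaviour of $0$ in $G^0$. The only thing to watch is applying the axioms with the intended substitution (treating $0$ as the ``inner'' element whose image lands in $\dom\af_m$), which is why the equality $m\cdot 0 = 0$ is doing all the work.
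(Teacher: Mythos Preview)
Your proof is correct and follows essentially the same route as the paper: apply \ref{AP2 set strong} with the inner map $\af_0$ to get $\af_0^{-1}(\dom\af_m)=\dom\af_0$, hence $\im\af_0\subseteq\dom\af_m$, and then apply \ref{AP3 set} to an arbitrary $x=\af_0(y)$ to conclude $\af_m(x)=x$. One small caution: your parenthetical remark that ``$\af_0^{-1}$ and $\af_0$ are inverse bijections'' is not warranted in general (nothing here forces $\af_0$ to be injective), but fortunately your actual argument never uses it.
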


\begin{proof}
    Let $m \in M$. By \cref{AP2 set strong}, we have
    $$\af_0^{-1}(\dom\af_m) = \dom\af_{m 0} \cap \dom\af_0 = \dom\af_0,$$
    so $\im \af_0 = \af_0(\dom\af_0) \subseteq \dom\af_m$.

    Now, let $x = \af_0(x')$, where $x' \in \dom\af_0$. Then by \cref{AP3 set} we have
    $$\af_m(x) = \af_m(\af_0(x')) = \af_{m0}(x') = \af_0(x') = x,$$
    as desired.
\end{proof}

\begin{lema}\label{harpoon-relation-lema}
    Let $g,h \in G$ and $x,y \in X$. Then
    \begin{enumerate}
        \item\label{harpoon-1} 
        %for all $g \in G$ and $x,y \in X$ we have
        $
            (0,x) \rightharpoonup (g,y) \iff x \in \dom\alpha_0 \text{ and } \alpha_0(x) = y,
        $
        %$(0,x) \rightharpoonup (g,y)$ if and only if $x \in \dom\alpha_0$ and $\alpha_0(x) = y$,

        \item\label{harpoon-2} %for all $x,y \in X$ we have %$(0,x) \rightharpoonup (0,y)$ if and only if there exists $m \in M$ such that $x \in \dom\alpha_m$ and $\alpha_m(x) = y$,
        $
            (0,x) \rightharpoonup (0,y) \iff  \exists m \in M \text{ s.t. } x \in \dom\alpha_m \text{ and } \alpha_m(x) = y,
        $

        \item\label{harpoon-3} %for all $g,h \in G$ and $x,y \in X$ we have %$(g,x) \rightharpoonup (h,y)$ if and only if $x \in \dom\alpha_{h^{-1}g}$ and $\alpha_{h^{-1}g}(x) = y$.
        $
            (g,x) \rightharpoonup (h,y) \iff (g,x) \leftharpoonup (h,y) \iff x \in \dom\alpha_{h^{-1}g} \text{ and } \alpha_{h^{-1}g}(x) = y,
        $

        \item\label{harpoon-4} %for all $g,h \in G$ and $x,y \in X$ we have
        $
            (g,x) \not \rightharpoonup (0,y).
        $
    \end{enumerate}
\end{lema}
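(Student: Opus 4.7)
The plan is to unpack the definition of $\rightharpoonup$ in each case and use the multiplicative structure of $M = G^0$: inside $G^0$, the product $nk$ equals $0$ precisely when one of the factors is $0$, while products of group elements are determined uniquely by a single group element. So for each item I would identify exactly which $k \in M$ can witness the relation $m = nk$, then translate this into a condition on $\af_k$.

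For \cref{harpoon-1}, I would observe that $(0,x) \rightharpoonup (g,y)$ asks for $k \in M$ with $gk = 0$; since $g \in G$ is a unit in $G^0$ (hence not a zero-divisor), the only option is $k = 0$, which reduces the defining condition to $x \in \dom\af_0$ and $y = \af_0(x)$. For \cref{harpoon-2}, the equation $0 = 0 \cdot k$ holds for \emph{every} $k \in M$, so the condition becomes the existence of some $m \in M$ with $x \in \dom\af_m$ and $\af_m(x) = y$, as stated. For \cref{harpoon-4}, $(g,x) \rightharpoonup (0,y)$ would force $g = 0 \cdot k = 0$, contradicting $g \in G$.

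The main case is \cref{harpoon-3}. For the first equivalence, the equation $g = hk$ with $g,h \in G$ forces $k \in G$ (otherwise $k=0$ and $hk=0 \neq g$), and then $k = h^{-1}g$ is uniquely determined; the defining condition of $\rightharpoonup$ becomes exactly $x \in \dom\af_{h^{-1}g}$ and $y = \af_{h^{-1}g}(x)$. For the second equivalence, I would unfold $(g,x) \leftharpoonup (h,y)$, i.e. $(h,y) \rightharpoonup (g,x)$, via the same reasoning to get $y \in \dom\af_{g^{-1}h}$ and $x = \af_{g^{-1}h}(y)$. Since $g^{-1}h$ is the inverse of $h^{-1}g$ in $G \subseteq M$, the equivalence with the previous condition is precisely \cref{m-invertible-implies-afm-1-=-af(m-1)}.

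There is no serious obstacle here; the only subtle point is remembering in \cref{harpoon-3} that the symmetry between $\rightharpoonup$ and $\leftharpoonup$ on $G \times X$ is not formal but uses the invertibility of group elements via \cref{m-invertible-implies-afm-1-=-af(m-1)}, and noting that this symmetry fails in (i)--(ii) because $0$ is not invertible, which is consistent with (iv).
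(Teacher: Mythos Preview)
Your proposal is correct and follows essentially the same approach as the paper: in each case you unpack the definition of $\rightharpoonup$, solve the equation $m = nk$ in $G^0$, and for \cref{harpoon-3} invoke \cref{m-invertible-implies-afm-1-=-af(m-1)} to get the symmetry. The only cosmetic difference is that for the second equivalence in \cref{harpoon-3} the paper derives it directly from the first via the remark, whereas you unfold $(h,y) \rightharpoonup (g,x)$ first and then apply the remark, but this is the same argument.
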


\begin{proof}
    \cref{harpoon-1}. We have $(0,x) \rightharpoonup (g,y)$ if and only if there exists $m \in M$ such that $0 = gm$, $x \in \dom\alpha_m$ and $\af_m(x) = y$. However, $gm = 0\iff m = 0$, so we are done.
    %we get $x \in \dom\af_0$ and $\af_0(x) = y$. Conversely, if $x \in \dom\af_0$ and $\af_0(x) = y$ we have
    %$$(0,x) = (g0,x) \rightharpoonup (g,\af_0(x)) = (g,y).$$

    \cref{harpoon-2}. We have $(0,x) \rightharpoonup (0,y)$ if and only if there exists $m \in M$ such that $0 = 0m$, $x \in \dom\alpha_m$ and $\af_m(x) = y$. Since $0=0m$ holds for any $m\in M$, we are done.%Conversely, if there exists $m \in M$ such that $x \in \dom\af_m$ and $\af_m(x) = y$ we have
    %$$(0,x) = (0m,x) \rightharpoonup (0,\af_m(x)) = (0,y).$$

    \cref{harpoon-3}. We have $(g,x) \rightharpoonup (h,y)$ if and only if there exists $m \in M$ such that $g = hm$, $x \in \dom\af_m$ and $\af_m(x) =y$. Since $g = hm\iff m = h^{-1}g$, we have the first ``$\iff$'' of \cref{harpoon-3}. Then the second ``$\iff$'' follows from \cref{m-invertible-implies-afm-1-=-af(m-1)}.
    % %and we have what is desired. Conversely, if $x \in \dom\alpha_{h^{-1}g}$ and $\alpha_{h^{-1}g}(x) = y$ we have
    % %$$(g,x) = (h(h^{-1}g),x) \rightharpoonup (h,\alpha_{h^{-1}g}(x)) = (h,y).$$

    \cref{harpoon-4}. If $(g,x) \rightharpoonup (0,y)$, there would exist $m \in M$ such that $g = 0m$, which is impossible.
\end{proof}

Let $\leftrightharpoonup$ be the symmetric closure of $\rightharpoonup$.

% \begin{corolario}
%     It follows from \cref{harpoon-relation-lema}\cref{harpoon-3}
% \end{corolario}

\begin{lema}\label{0a-0b-harpoon-relation}
    Let $x,y \in X$ be such that $(0,x) \leftrightharpoonup (0,y)$. Then $x \in \dom\af_0\iff y \in \dom\af_0$, in which case $\af_0(x) = \af_0(y)$.
\end{lema}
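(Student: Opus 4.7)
The plan is to unwind the definition of $\leftrightharpoonup$ and invoke \cref{harpoon-relation-lema}\cref{harpoon-2}. Since the conclusion of the lemma is symmetric in $x$ and $y$, we may assume without loss of generality that $(0,x) \rightharpoonup (0,y)$, which by that part of the previous lemma produces some $m \in M$ with $x \in \dom\af_m$ and $\af_m(x)=y$. The proof then splits into two cases according to whether $m=0$ or $m\in G$.

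In the case $m=0$, we have $x\in\dom\af_0$ for free and $y=\af_0(x)\in\im\af_0$, so \cref{imaf0-is-invariant-by-all-m} gives both $y\in\dom\af_0$ and $\af_0(y)=y$. Hence $\af_0(x)=y=\af_0(y)$ and both $x,y$ are in $\dom\af_0$, so the equivalence is trivially satisfied.

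In the case $m\in G$, apply \cref{AP2 set strong} with the pair $(m,0)$ (noting $0\cdot m=0$): this yields
\[
\af_m^{-1}(\dom\af_0)=\dom\af_0\cap\dom\af_m.
\]
Since $x\in\dom\af_m$, this identity is precisely the statement $y=\af_m(x)\in\dom\af_0 \iff x\in\dom\af_0$. Assuming both lie in $\dom\af_0$, an application of \cref{AP3 set} gives $\af_0(y)=\af_0(\af_m(x))=\af_{0m}(x)=\af_0(x)$, finishing the proof.

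There is essentially no serious obstacle here: the argument is a direct case analysis using axioms \cref{AP2 set strong} and \cref{AP3 set}, together with \cref{imaf0-is-invariant-by-all-m} to handle the degenerate case $m=0$. The only mild point to keep track of is the symmetry reduction at the start, ensuring that the ``$\iff$'' is really established in both directions.
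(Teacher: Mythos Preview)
Your proof is correct and follows essentially the same approach as the paper: reduce by symmetry to $(0,x)\rightharpoonup(0,y)$, extract $m\in M$ via \cref{harpoon-relation-lema}\cref{harpoon-2}, then use \cref{AP2 set strong} and \cref{AP3 set}. The only difference is that your case split on $m=0$ versus $m\in G$ is unnecessary---the paper observes that $0m=0$ for \emph{all} $m\in M$, so the \cref{AP2 set strong}/\cref{AP3 set} argument you give for $m\in G$ already covers $m=0$ uniformly, without appealing to \cref{imaf0-is-invariant-by-all-m}.
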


\begin{proof}
     %Assume $x \in \dom\af_0$ and $(0,x) \rightharpoonup (0,y)$. Then by \cref{harpoon-relation-lema} there exists $m \in M$ such that $x \in \dom\af_m$ and $\af_m(x) = y$. By \cref{AP2 set strong}, $\dom\af_0 \cap \dom\af_m = \af_m^{-1}(\dom\af_0)$, so $x \in \af_m^{-1}(\dom\af_0)$, so we have $y = \af_m(x) \in \dom\af_0$. Furthermore, by \cref{AP3 set}, $\af_0(y) = \af_0(\af_m(x)) = \af_0(x)$.

    %Now assume $x \in \dom\af_0$ and $(0,x) \leftharpoonup (0,y)$. Then by \cref{harpoon-relation-lema} there exists $m \in M$ such that $y \in \dom\af_m$ and $\af_m(y) = x$. By \cref{AP2 set strong}, $y \in \af_m^{-1}(\dom\af_0) = \dom\af_0 \cap \dom\af_m \subseteq \dom\af_0$. And by \cref{AP3 set} we then get $\af_0(x) = \af_0(\af_m(y)) = \af_0(y)$, as desired.

    First, assume $(0,x) \rightharpoonup (0,y)$. Then by \cref{harpoon-relation-lema}\cref{harpoon-2} there exists $m \in M$ such that $x \in \dom\af_m$ and $\af_m(x) = y$. By \cref{AP2 set strong} we have $\dom\af_0 \cap \dom\af_m = \af_m^{-1}(\dom\af_0)$. Hence, $x \in \dom\af_0$ if and only if $y \in \dom\af_0$. Moreover, by \cref{AP3 set} we have $\af_0(y) = \af_0(\af_m(x)) = \af_0(x)$. The case $(0,x) \leftharpoonup (0,y)$ is proved by interchanging $x$ and $y$ above.
\end{proof}

\begin{lema}\label{0a-0b-harpoon-relation-x-not-in-daf0}
    %If $x \not \in \dom\af_0$, then $(0,x) \leftrightharpoonup (0,y)$ if and only if there exists $g \in G$ such that $\af_g(x) = y$.

    Let $x,y \in X$ be such that $x \not\in\dom\af_0$. Then $(0,x) \leftrightharpoonup (0,y)$ if and only if there exists $g\in G$ such that $x \in \dom\af_g$ and $\af_g(x) = y$.
\end{lema}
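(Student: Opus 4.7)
The plan is to handle each direction separately using the machinery already set up in \cref{harpoon-relation-lema,0a-0b-harpoon-relation,m-invertible-implies-afm-1-=-af(m-1)}.

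For the ``if'' direction, suppose $g \in G$ satisfies $x \in \dom\af_g$ and $\af_g(x) = y$. Since $g \in G \subseteq M$, \cref{harpoon-relation-lema}\cref{harpoon-2} gives $(0,x) \rightharpoonup (0,y)$ immediately by taking $m = g$, which is stronger than $(0,x) \leftrightharpoonup (0,y)$.

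For the ``only if'' direction, since $\leftrightharpoonup$ is the symmetric closure of $\rightharpoonup$, I would split into two cases. In the first case $(0,x) \rightharpoonup (0,y)$, \cref{harpoon-relation-lema}\cref{harpoon-2} yields some $m \in M$ with $x \in \dom\af_m$ and $\af_m(x) = y$. I would rule out $m = 0$ using the hypothesis $x \notin \dom\af_0$, so $m \in G$ and we may take $g = m$. In the second case $(0,x) \leftharpoonup (0,y)$, i.e.\ $(0,y) \rightharpoonup (0,x)$, the same lemma provides $m \in M$ with $y \in \dom\af_m$ and $\af_m(y) = x$. Here I need to rule out $m = 0$: by \cref{0a-0b-harpoon-relation} applied to $(0,x) \leftrightharpoonup (0,y)$, the hypothesis $x \notin \dom\af_0$ forces $y \notin \dom\af_0$ as well, so $m \in G$. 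Then \cref{m-invertible-implies-afm-1-=-af(m-1)} converts the relation $y \in \dom\af_m$, $\af_m(y) = x$ into $x \in \dom\af_{m^{-1}}$, $\af_{m^{-1}}(x) = y$, so $g = m^{-1}$ works.

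There is no significant obstacle here; the only subtlety is remembering to invoke \cref{0a-0b-harpoon-relation} in the second case to transfer the hypothesis $x \notin \dom\af_0$ to $y \notin \dom\af_0$ before excluding $m = 0$.
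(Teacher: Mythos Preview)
Your proposal is correct and matches the paper's proof essentially line for line: the same split into the two cases of $\leftrightharpoonup$, the same appeal to \cref{harpoon-relation-lema}\cref{harpoon-2} to extract $m\in M$, the exclusion of $m=0$ via the hypothesis (transferred to $y$ in the second case using \cref{0a-0b-harpoon-relation}), and the final inversion via \cref{m-invertible-implies-afm-1-=-af(m-1)}.
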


\begin{proof}
    The ``if'' part follows immediately by \cref{harpoon-relation-lema}\cref{harpoon-2}, so it suffices to show the ``only if'' part.

    If $(0,x) \rightharpoonup (0,y)$, by \cref{harpoon-relation-lema}\cref{harpoon-2} there exists $m \in M$ such that $x \in \dom\af_m$ and $\af_m(x) = y$. Since $x \not\in \dom\af_0$, we have $m \in G$.

    % If $(0,x) \leftharpoonup (0,y)$, by \cref{harpoon-relation-lema}\cref{harpoon-2} there exists $m \in M$ such that $y \in \dom\af_m$ and $\af_m(y) = x$. Since $x \not\in\dom\af_0$, $x \not \in \im\af_0$, so $m \in G$. Now, by \cref{AP2 set strong,AP1 set} we have that
    % $$y \in \dom\af_m = \dom\af_e \cap \dom\af_m = \af_m^{-1}(\dom\af_{m^{-1}}),$$
    % so $x = \af_m(y) \in \dom\af_{m^{-1}}$. Also, by \cref{AP3 set,AP1 set} it follows that 
    % $$\af_{m^{-1}}(x) = \af_{m^{-1}}(\af_m(y)) = \af_e(y) = y,$$
    % so in this case we also have what was desired.

    If $(0,x) \leftharpoonup (0,y)$, then $x \not\in\dom\af_0$ implies $y \not\in\dom\af_0$ by \cref{0a-0b-harpoon-relation}. Then, as above, there exists $m \in G$ such that $y \in \dom\af_m$ and $\af_m(y) = x$. %Then using $m^{-1}m=e$ and \cref{acao_parcial_em_conjuntos}, %Since $\{\af_g\}_{g\in G}$ is a partial group action, 
    %Thus, by \cref{AP1 set,AP2 set strong,AP3 set} 
    Then $x \in \dom\af_{m^{-1}}$ and $y = \af_{m^{-1}}(x)$ by \cref{m-invertible-implies-afm-1-=-af(m-1)}.
\end{proof}

\begin{lema}\label{harpoon-chain-G-0-G-may-be-removed}
    Let $g,h \in G$ and $x,y\in X$ be such that there exist $x=x_1,\dots,x_n=y$ with 
    $$(g,x) = (g,x_1) \leftrightharpoonup (0,x_2) \leftrightharpoonup \dots \leftrightharpoonup (0,x_{n-1}) \leftrightharpoonup (h,x_n) = (h,y)$$
    in $M \times X$. Then $(g,x) \leftrightharpoonup (h,y)$ and $x = y$.
\end{lema}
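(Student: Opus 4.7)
The plan is to exploit the asymmetry between the group elements and $0$: the only way to produce a chain that starts at $(g,x)$ with $g\in G$ and steps to $(0,x_2)$ is in the direction $(0,x_2)\rightharpoonup(g,x)$, because \cref{harpoon-relation-lema}\cref{harpoon-4} forbids the other direction. This will immediately force $x,y\in\im\af_0$, after which \cref{imaf0-is-invariant-by-all-m} will give $(g,x)\leftrightharpoonup(h,y)$ almost for free.

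First, I would analyze the endpoints of the chain. The relation $(g,x_1)\leftrightharpoonup(0,x_2)$ combined with \cref{harpoon-relation-lema}\cref{harpoon-4} forces $(0,x_2)\rightharpoonup(g,x_1)$, and then \cref{harpoon-relation-lema}\cref{harpoon-1} yields $x_2\in\dom\af_0$ with $\af_0(x_2)=x$. In particular, $x\in\im\af_0$. By the symmetric argument applied to $(0,x_{n-1})\leftrightharpoonup(h,x_n)$, we get $x_{n-1}\in\dom\af_0$ with $\af_0(x_{n-1})=y$, so $y\in\im\af_0$ as well.

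Next, I would push the information about $\dom\af_0$ and $\af_0$ through the middle of the chain by induction. Since $x_2\in\dom\af_0$ and $(0,x_2)\leftrightharpoonup(0,x_3)$, \cref{0a-0b-harpoon-relation} gives $x_3\in\dom\af_0$ and $\af_0(x_3)=\af_0(x_2)=x$. Iterating, one obtains $x_i\in\dom\af_0$ and $\af_0(x_i)=x$ for every $i=2,\dots,n-1$. In particular, $x=\af_0(x_{n-1})=y$, which settles the second claim.

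Finally, to conclude $(g,x)\leftrightharpoonup(h,y)=(h,x)$, I invoke \cref{imaf0-is-invariant-by-all-m}: since $x\in\im\af_0$, we have $x\in\dom\af_{h^{-1}g}$ and $\af_{h^{-1}g}(x)=x$, so \cref{harpoon-relation-lema}\cref{harpoon-3} gives $(g,x)\leftrightharpoonup(h,x)$ directly. There is no real obstacle here; the subtlety to keep straight is simply that both extremal steps of the chain must go ``upward'' from $(0,\cdot)$ to the group-coordinate element, which is exactly what \cref{harpoon-4} guarantees and is what puts $x$ (and hence $y$) into $\im\af_0$.
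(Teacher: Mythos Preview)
Your proof is correct and follows essentially the same approach as the paper: use \cref{harpoon-relation-lema}\cref{harpoon-4} at the two extremal steps to force $x_2,x_{n-1}\in\dom\af_0$ with $\af_0(x_2)=x$ and $\af_0(x_{n-1})=y$, propagate through the middle via \cref{0a-0b-harpoon-relation} to get $x=y$, and then finish with \cref{imaf0-is-invariant-by-all-m} and \cref{harpoon-relation-lema}\cref{harpoon-3}.
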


\begin{proof}
    Since $(g,x) \leftrightharpoonup (0,x_2)$, by \cref{harpoon-relation-lema}\cref{harpoon-4} we have $(g,x) \leftharpoonup (0,x_2)$, so $x_2 \in \dom\af_0$ and $\af_0(x_2) = x$ by \cref{harpoon-relation-lema}\cref{harpoon-1}. Similarly, since $(0,x_{n-1}) \leftrightharpoonup (h,y)$ we have $x_{n-1} \in \dom\af_0$ and $\af_0(x_{n-1}) = y$. 

    Now, by \cref{0a-0b-harpoon-relation}, since $x_2 \in \dom\af_0$ and $(0,x_2) \leftrightharpoonup (0,x_3)$, we have $x_3 \in \dom\af_0$ and $x = \af_0(x_2) = \af_0(x_3)$. Inductively, since $(0,x_i) \leftrightharpoonup (0,x_{i+1})$ for all $i = 2, \dots, n-2$, it follows that $x_i \in \dom\af_0$ for all $i = 2, \dots, n-1$ and
    $$x = \af_0(x_2) = \af_0(x_3) = \dots = \af_0(x_{n-1}) = y,$$
    as desired.

    % Observe that in this case we have $(g,x) \leftrightharpoonup (h,y)$ because, by \cref{AP2 set strong}, $x \in \im\af_0 \subseteq \dom\af_m$ for all $m \in M$, so, in particular, $x \in \dom \af_{h^{-1}g}$ and, by \cref{AP3 set},
    % $$\af_{h^{-1}g}(x) = \af_{h^{-1}g}(\af_0(x_2)) = \af_0(x_2) = y.$$
    Since $x \in \im\af_0$, by \cref{imaf0-is-invariant-by-all-m} we have $x \in \dom \af_{h^{-1}g}$ and $\af_{h^{-1}g}(x) = x$. Hence, by \cref{harpoon-relation-lema}\cref{harpoon-3}, $(g,x) \leftrightharpoonup (h,y)$ because $x = y$.%, because by \cref{imaf0-is-invariant-by-all-m} we have $x \in \dom \af_{h^{-1}g}$ and $\af_{h^{-1}g}(x) = x = y$.
\end{proof}

\begin{lema}\label{[gx]-x-not-in-imaf0}
        Let $g \in G$ and $x \in X \setminus \im\af_0$. Then $[g,x] = [m,y]$ if and only if $m \in G$, $x \in \dom\af_{m^{-1}g}$ and $y = \af_{m^{-1}g}(x)$.
        
        %$$[g,x] = \{(h,y) \in G \times X : x \in \dom\alpha_{h^{-1}g} \text{ and } \alpha_{h^{-1}g}(x) = y\}.$$
\end{lema}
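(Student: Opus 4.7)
The plan is to split the biconditional. For the ``if'' direction, I would appeal directly to the earlier lemmas: if $m\in G$, $x \in \dom\af_{m^{-1}g}$ and $y=\af_{m^{-1}g}(x)$, then \cref{harpoon-relation-lema}\cref{harpoon-3} gives $(g,x) \leftrightharpoonup (m,y)$, and hence $[g,x] = [m,y]$.

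For the ``only if'' direction, I would fix a witnessing chain $(g,x) = (m_0, x_0) \leftrightharpoonup (m_1, x_1) \leftrightharpoonup \dots \leftrightharpoonup (m_n, x_n) = (m,y)$ and prove by induction on $i$ the compound statement ``$m_i \in G$ and $x_i \notin \im\af_0$''. The base case $i=0$ is the hypothesis on $(g,x)$. For the inductive step, I would first argue that $m_{i+1}\neq 0$: otherwise \cref{harpoon-relation-lema}\cref{harpoon-4} would force the arrow to point from $(0,x_{i+1})$ to $(m_i,x_i)$, and then \cref{harpoon-relation-lema}\cref{harpoon-1} would yield $x_i = \af_0(x_{i+1}) \in \im\af_0$, contradicting the inductive hypothesis. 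Once $m_{i+1}\in G$ is established, I would apply \cref{harpoon-relation-lema}\cref{harpoon-3} together with \cref{m-invertible-implies-afm-1-=-af(m-1)} to obtain both $x_{i+1} = \af_{m_{i+1}^{-1}m_i}(x_i)$ and $x_i = \af_{m_i^{-1}m_{i+1}}(x_{i+1})$; if $x_{i+1}$ were in $\im\af_0$, then \cref{imaf0-is-invariant-by-all-m} would give $x_i = \af_{m_i^{-1}m_{i+1}}(x_{i+1}) = x_{i+1}\in\im\af_0$, another contradiction.

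Once that induction is complete, we know in particular that $m = m_n \in G$ and that every link in the chain lives in $G\times X$. A second, routine induction on $n$ using \cref{AP2 set strong} and \cref{AP3 set} would then close the argument: assuming inductively that $x \in \dom\af_{m_{n-1}^{-1}g}$ with $\af_{m_{n-1}^{-1}g}(x) = x_{n-1}$, and reading off from the last arrow of the chain that $x_{n-1} \in \dom\af_{m^{-1}m_{n-1}}$ with $\af_{m^{-1}m_{n-1}}(x_{n-1}) = y$, axiom \cref{AP2 set strong} yields $x \in \dom\af_{m^{-1}g}$ and axiom \cref{AP3 set} yields $\af_{m^{-1}g}(x) = y$.

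The main obstacle I foresee is the first induction: propagating the property $x_i \notin \im\af_0$ across a single $G$-step requires simultaneously exploiting the asymmetry of $\rightharpoonup$ at $0$ (\cref{harpoon-relation-lema}\cref{harpoon-1,harpoon-4}) and the pointwise fixedness of $\im\af_0$ under every $\af_m$ from \cref{imaf0-is-invariant-by-all-m}, so that a $G$-step cannot silently push a point into $\im\af_0$. After that is handled, the telescoping identification $y = \af_{m^{-1}g}(x)$ is straightforward.
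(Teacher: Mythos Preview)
Your proposal is correct and follows essentially the same approach as the paper's proof. The paper also fixes a chain and argues inductively that each $m_i\in G$ and $x_i\notin\im\af_0$ (invoking \cref{harpoon-relation-lema}\cref{harpoon-1,harpoon-4} to rule out $m_{i+1}=0$, and a fixed-point argument on $\im\af_0$---via \cref{AP3 set} rather than \cref{imaf0-is-invariant-by-all-m}, which amounts to the same thing---to rule out $x_{i+1}\in\im\af_0$), then telescopes via \cref{AP2 set strong,AP3 set} to obtain $y=\af_{m^{-1}g}(x)$; the only cosmetic difference is that the paper runs the two inductions simultaneously.
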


\begin{proof}
    In view of \cref{harpoon-relation-lema}\cref{harpoon-3}, we only need to prove the ``only if'' part. Let $[g,x] = [m,y]$, so there is a chain
    $$(g,x) = (m_1,x_1) \leftrightharpoonup (m_2,x_2) \leftrightharpoonup \dots \leftrightharpoonup (m_{n-1},x_{n-1}) \leftrightharpoonup (m_n,x_n) = (m,y).$$
    %where, say, $w_i = (m_i, x_i)$ for each $i = 1, \dots, n$.

    By \cref{harpoon-1,harpoon-4} of \cref{harpoon-relation-lema}, since $(g,x) \leftrightharpoonup (m_2,x_2)$ and $x \not \in \im \af_0$, we have $m_2 \neq 0$, so $x_2 = \af_{m_2^{-1} g}(x)$ by \cref{harpoon-relation-lema}\cref{harpoon-3}. Furthermore, $x_2 \not \in \im \af_0$, otherwise, by \cref{AP3 set,m-invertible-implies-afm-1-=-af(m-1)} we would have 
    $$x = \af_{g^{-1}m_2}(x_2) = \af_{g^{-1}m_2}(\af_0(x')) = \af_0(x') \in \im\af_0$$
    for some $x' \in \dom\af_0$.

    Since $m_2\in G$ and $x_2 \not \in \im \af_0$, as above, $(m_2,x_2) \leftrightharpoonup (m_3,x_3)$ implies $m_3 \in G$, $x_3 = \af_{m_3^{-1}m_2}(x_2)$ and $x_3 \not\in\im\af_0$. So, it follows by \cref{AP2 set strong,AP3 set} that $x \in \dom\af_{m_3^{-1}g}$ and
    $$x_3 = \af_{m_3^{-1}m_2}(x_2) = \af_{m_3^{-1}m_2}(\af_{m_2^{-1} g}(x)) = \af_{m_3^{-1}g}(x).$$

    By induction, for each $i$ we have $m_i \in G$ and $x_i = \af_{m_i^{-1}g}(x)$. In particular, it follows that $m \in G$, $x \in \dom\alpha_{m^{-1}g}$ and $y = \af_{m^{-1}g}(x)$.
    %Hence, $[g,x] \subseteq \{(h,y) \in G \times X : x \in \dom\alpha_{h^{-1}g} \text{ and } \alpha_{h^{-1}g}(x) = y\}$. Conversely, any $(h,y) \in G \times X$ such that $x \in \dom\alpha_{h^{-1}g}$ and $\alpha_{h^{-1}g}(x) = y$ is such that $(g,x) \rightharpoonup (h,y)$ by \cref{harpoon-relation-lema}, so we have what is desired.
\end{proof}

\begin{lema}\label{[gx]-x-in-imaf0}
        Let $g \in G$ and $x \in \im\af_0$. Then $[g,x] = [m,y]$ if and only if one of the following two conditions holds:
        \begin{enumerate}
            \item\label{[gx]-x-in-imaf0-1} $m \in G$ and $y = x$;
            \item\label{[gx]-x-in-imaf0-2} $m = 0$, $y \in \dom\af_0$ and $x = \af_0(y)$.
        \end{enumerate}
        %$$[g,x] = (G\times\{x\}) \sqcup (\{0\}\times{\alpha_0^{-1}(x)}).$$
\end{lema}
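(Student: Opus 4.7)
The plan is to describe $[g,x]$ explicitly as the set $A$ of all pairs $(m,y) \in M \times X$ satisfying either condition (i) or condition (ii), and then to prove both inclusions $A \subseteq [g,x]$ and $[g,x] \subseteq A$ separately.

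For $A \subseteq [g,x]$, which is the ``if'' direction, the argument will be immediate. In case (i), the assumption $x \in \im \af_0$ together with \cref{imaf0-is-invariant-by-all-m} gives $x \in \dom \af_{m^{-1}g}$ and $\af_{m^{-1}g}(x) = x$, so \cref{harpoon-relation-lema}\cref{harpoon-3} yields $(g,x) \leftrightharpoonup (m,x)$. In case (ii), the hypothesis $\af_0(y) = x$ together with \cref{harpoon-relation-lema}\cref{harpoon-1} gives $(0,y) \rightharpoonup (g,x)$. Either way, $[g,x] = [m,y]$.

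For the reverse inclusion $[g,x] \subseteq A$, I will show that $A$ is closed under the symmetric relation $\leftrightharpoonup$. Since $(g,x) \in A$ trivially (take (i) with $m = g$, $y = x$), induction on the length of a $\leftrightharpoonup$-chain witnessing $(g,x) \approx (m,y)$ will then force $(m,y) \in A$. To verify closure, I will take $(p,z) \in A$ with $(p,z) \leftrightharpoonup (q,w)$ and split into four sub-cases according to whether each of $p,q$ lies in $G$ or equals $0$: when both lie in $G$, \cref{harpoon-relation-lema}\cref{harpoon-3} gives $w = \af_{q^{-1}p}(z)$ and then \cref{imaf0-is-invariant-by-all-m} forces $w = x$; the two mixed cases will be handled by combining \cref{harpoon-relation-lema}\cref{harpoon-4} (which pins down the direction of the arrow) with \cref{harpoon-relation-lema}\cref{harpoon-1} (which yields the required $\af_0$-relation); and the case where both first coordinates equal $0$ follows directly from \cref{0a-0b-harpoon-relation}.

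I do not expect any real obstacle, since every sub-case will reduce to a one-line application of a previously established lemma. The only conceptual point worth flagging is that $x \in \im \af_0$ makes $x$ a common fixed point of every $\af_m$ by \cref{imaf0-is-invariant-by-all-m}; this is precisely what propagates the property ``$y = x$'' along any segment of the chain whose first coordinates stay in $G$, and the property ``$\af_0(y) = x$'' along any segment whose first coordinates stay at $0$.
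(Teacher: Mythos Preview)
Your proof is correct. The ``if'' direction matches the paper's exactly. For the ``only if'' direction, your invariant argument --- showing that the set $A$ is closed under $\leftrightharpoonup$ and contains $(g,x)$ --- works cleanly; each of the four sub-cases goes through precisely as you outline, with the key observation being that in the $(p,q)\in G\times G$ case condition (i) propagates via \cref{imaf0-is-invariant-by-all-m}, in the $p=q=0$ case condition (ii) propagates via \cref{0a-0b-harpoon-relation}, and the two mixed cases convert one condition into the other using \cref{harpoon-relation-lema}\cref{harpoon-1} and \cref{harpoon-relation-lema}\cref{harpoon-4}.

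The paper takes a slightly different organizational route: it first invokes \cref{harpoon-chain-G-0-G-may-be-removed} to normalize the $\leftrightharpoonup$-chain so that all the $G$-coordinates come first and all the $0$-coordinates come afterward, and then analyzes the two segments separately. Your closure argument avoids \cref{harpoon-chain-G-0-G-may-be-removed} altogether, at the cost of treating four local sub-cases instead of two global segments. Both rely on the same underlying lemmas (\cref{imaf0-is-invariant-by-all-m}, \cref{harpoon-relation-lema}, \cref{0a-0b-harpoon-relation}); your version is arguably more self-contained, while the paper's version exposes more structure of the chain.
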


\begin{proof}
    The ``only if'' part. Assume $[g,x] = [m,y]$. Then there is a chain
    $$(g,x) = (m_1,x_1) \leftrightharpoonup (m_2,x_2) \leftrightharpoonup \dots \leftrightharpoonup (m_n,x_n) = (m,y).$$
    By \cref{harpoon-chain-G-0-G-may-be-removed}, we may assume without loss of generality that there exists $k$ such that $m_i \in G$ for all $i \leq k$ and $m_i = 0$ for all $i > k$.

    It follows from \cref{harpoon-relation-lema}\cref{harpoon-3}, \cref{AP2 set strong,AP3 set} that $\leftrightharpoonup$ is transitive when restricted to $(G\times X)\times (G \times X)$, so $(m_1,x_1) \leftrightharpoonup (m_k,x_k)$, i.e. $x \in \dom\af_{m_k^{-1}g}$ and $\af_{m_k^{-1}g}(x) = x_k$, by \cref{harpoon-relation-lema}\cref{harpoon-3}. Since moreover $x \in \im\af_0$, there exists $x' \in \dom\af_0$ such that $\af_0(x') = x$, so by \cref{AP3 set} we have
    $$x_k = \af_{m_k^{-1}g}(x) = \af_{m_k^{-1}g}(\af_0(x')) = \af_0(x') = x.$$
    If $n = k$, then $[m,y] = [m_k,x_k] = [m_k,x]$, so we have \cref{[gx]-x-in-imaf0-1}. If $k < n$, then $(m_k,x_k) \leftrightharpoonup (0,x_{k+1})$ implies $x_{k+1} \in \dom\af_0$ and $\af_0(x_{k+1}) = x_k = x$, by \cref{harpoon-1,harpoon-4} of \cref{harpoon-relation-lema}. Applying \cref{0a-0b-harpoon-relation} consecutively to $(m_i,x_i)\leftrightharpoonup(m_{i+1},x_{i+1})$ for all $i=k+1,\dots,n-1$, we have $x_i \in \dom\af_0$ for all $i = k+1,\dots, n$ and $\af_0(x_i) = \af_0(x_{k+1}) = x$. In particular, $\af_0(x_n) = x$, so we get \cref{[gx]-x-in-imaf0-2}.

    %Thus, $[g,x] \subseteq (G\times\{x\}) \sqcup (\{0\}\times{\alpha_0^{-1}(x)})$.

    % The ``if'' part. let $h \in G$. Then by \cref{imaf0-is-invariant-by-all-m}, since $x \in \im\af_0$ we have $x \in \dom \af_{h^{-1}g}$ and
    % $$\af_{h^{-1}g}(x) = x,$$
    % so $(g,x) \leftrightharpoonup (h,x)$ by \cref{harpoon-relation-lema} and, thus, $(h,x) \in [g,x]$.
    
    % And if $x' \in \dom\af_0$ is such that $\af_0(x') = x$, then $(g,x) \leftharpoonup (0,x')$ by \cref{harpoon-relation-lema}, so $(0,x') \in [g,x]$.

    % Hence, $(G\times\{x\}) \sqcup (\{0\}\times{\alpha_0^{-1}(x)}) \subseteq [g,x]$, and we have what is desired.

    The ``if'' part. First assume \cref{[gx]-x-in-imaf0-1}. By \cref{imaf0-is-invariant-by-all-m}, since $x \in \im\af_0$ we have $x \in \dom \af_{m^{-1}g}$ and $\af_{m^{-1}g}(x) = x$,
    so $(g,x) \leftrightharpoonup (m,x) = (m,y)$, by \cref{harpoon-relation-lema}\cref{harpoon-3}. Hence, $[g,x] = [m,y]$. Now assume \cref{[gx]-x-in-imaf0-2}. Then $[g,x] = [m,y]$ follows immediately from \cref{harpoon-relation-lema}\cref{harpoon-1}.
\end{proof}

\begin{lema}\label{[0x]-x-not-in-domaf0}
    Let $x\in X\setminus\dom\alpha_0$. Then $[0,x] = [m,y]$ if and only if $m = 0$ and there exists $g \in G$ such that $x \in \dom\af_g$ and $y = \af_g(x)$.
        %$$[0,x] = \{(0,\alpha_g(x)) : x \in \dom\af_g\}.$$
\end{lema}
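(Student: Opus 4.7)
The plan is to prove both implications separately, with the ``if'' direction being immediate and the ``only if'' direction requiring a careful analysis of an $\approx$-chain connecting $(0,x)$ to $(m,y)$.

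For the ``if'' direction, assume $m=0$ and that there exists $g\in G$ with $x\in\dom\af_g$ and $y=\af_g(x)$. Then by \cref{harpoon-relation-lema}\cref{harpoon-2} applied with this $g$, we have $(0,x)\rightharpoonup(0,y)$, so $[0,x]=[0,y]=[m,y]$.

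For the ``only if'' direction, suppose $[0,x]=[m,y]$ and fix a chain
\[(0,x)=(m_1,x_1)\leftrightharpoonup(m_2,x_2)\leftrightharpoonup\cdots\leftrightharpoonup(m_n,x_n)=(m,y).\]
First I would argue by induction on $i$ that $m_i=0$ and $x_i\notin\dom\af_0$ for all $i$. The base case is the hypothesis. For the inductive step, suppose $m_i=0$ and $x_i\notin\dom\af_0$. If $m_{i+1}\in G$, then $(0,x_i)\rightharpoonup(m_{i+1},x_{i+1})$ would force $x_i\in\dom\af_0$ by \cref{harpoon-relation-lema}\cref{harpoon-1}, while $(0,x_i)\leftharpoonup(m_{i+1},x_{i+1})$ is ruled out by \cref{harpoon-relation-lema}\cref{harpoon-4}; hence $m_{i+1}=0$. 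Then \cref{0a-0b-harpoon-relation} applied to $(0,x_i)\leftrightharpoonup(0,x_{i+1})$ gives $x_{i+1}\notin\dom\af_0$, since $x_i\notin\dom\af_0$. In particular $m=m_n=0$.

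Having established that $m_i=0$ and $x_i\notin\dom\af_0$ for all $i$, I would apply \cref{0a-0b-harpoon-relation-x-not-in-daf0} to each step $(0,x_i)\leftrightharpoonup(0,x_{i+1})$ to obtain $g_{i+1}\in G$ with $x_i\in\dom\af_{g_{i+1}}$ and $x_{i+1}=\af_{g_{i+1}}(x_i)$. A straightforward induction using \cref{AP2 set strong,AP3 set} then yields $x\in\dom\af_{g_n g_{n-1}\cdots g_2}$ and $\af_{g_n\cdots g_2}(x)=x_n=y$; setting $g=g_n\cdots g_2\in G$ (with the convention $g=e$ when $n=1$, which trivially works since $x=y$) finishes the argument. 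The main obstacle I anticipate is the case analysis when showing $m_i=0$ for all $i$, since one must invoke both \cref{harpoon-1} and \cref{harpoon-4} of \cref{harpoon-relation-lema} to exclude either direction of a hypothetical edge $(0,x_i)\leftrightharpoonup(g,x_{i+1})$; once this is clear, the rest reduces to a clean inductive composition of group elements.
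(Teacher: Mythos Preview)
Your proof is correct and follows essentially the same approach as the paper: both use the chain characterization of $\approx$, invoke \cref{harpoon-relation-lema}\cref{harpoon-1,harpoon-4} to force each $m_i=0$, use \cref{0a-0b-harpoon-relation} to propagate $x_i\notin\dom\af_0$, and then apply \cref{0a-0b-harpoon-relation-x-not-in-daf0} together with \ref{AP3 set} to compose the resulting group elements. The only cosmetic difference is that you separate the induction showing $m_i=0$ and $x_i\notin\dom\af_0$ from the subsequent extraction of the $g_i$'s, whereas the paper carries out both in a single inductive pass.
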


\begin{proof}
    The ``only if'' part. Assume $[0,x] = [m,y]$. Then there is a chain
    $$(0,x) = (m_1,x_1) \leftrightharpoonup (m_2,x_2) \leftrightharpoonup \dots \leftrightharpoonup (m_n,x_n) = (m,y).$$
    Since $x \not\in \dom\af_0$, by \cref{harpoon-1,harpoon-4} of \cref{harpoon-relation-lema} we have $m_2 = 0$. Then by \cref{0a-0b-harpoon-relation-x-not-in-daf0} there exists $g_1 \in G$ such that $x \in \dom\af_{g_1}$ and $x_2 = \af_{g_1}(x_1)$. Moreover, $x_2 \not \in\dom\af_0$ by \cref{0a-0b-harpoon-relation}.
    Applying consecutively the same argument to $(m_i,x_i) \leftrightharpoonup (m_{i+1},x_{i+1})$, we find $g_i \in G$ such that $x_i \in \dom \af_{g_i}$ and $(m_{i+1},x_{i+1}) = (0,\af_{g_i}(x_i))$, for all $i = 1, \dots, n-1$.
    Recursively and by \cref{AP3 set} we then have
    $$y = x_n = \af_{g_{n-1}}(x_{n-1})=\dots=(\af_{g_{n-1}} \circ \af_{g_{n-2}} \circ \dots \circ \af_{g_1})(x) = \af_{g_{n-1} g_{n-2} \dots g_1}(x).$$
    Hence, $(m,y) = (0,\alpha_g(x))$ for some $g \in G$ such that $x \in \dom\af_g$.

    %On the other hand, any element of the form $(0,\alpha_g(x))$ for some $g$ such that $x \in \dom\af_g$ is $\leftrightharpoonup$-related to $(0,x)$ by \cref{harpoon-relation-lema}.
    The ``if'' part follows directly from \cref{harpoon-relation-lema}\cref{harpoon-2}.
\end{proof}

\begin{proposicao}\label{equiv-classes-of-Gsqcup0}
    Assume that $M=G^0$, where $G$ is a group. Then $X_M$ is comprised of the classes
    \begin{align}
        [0,x] &= \{(0,\alpha_g(x)) : g \in G, x \in \dom\af_g\}, &&\text{ if } x \not\in\dom\alpha_0,\label{0-x-equiv-class} \\
        [g,x] &= \{(h,y): h\in G,x \in \dom\alpha_{h^{-1}g}, \alpha_{h^{-1}g}(x) = y\}, &&\text{ if } g\in G, \ x \not\in  \im\af_0, \label{g-x-equiv-class}\\
        [g,x] &= (G\times\{x\}) \sqcup (\{0\}\times{\alpha_0^{-1}(x)}), &&\text{ if } g\in G, \ x \in \im\af_0. \label{g-x-0-x-equiv-class}
    \end{align}
\end{proposicao}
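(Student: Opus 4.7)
The plan is straightforward: the statement is a direct consolidation of the three preceding lemmas \cref{[0x]-x-not-in-domaf0,[gx]-x-not-in-imaf0,[gx]-x-in-imaf0}, each of which characterizes the $\approx$-class $[m,x]$ in one of the three scenarios appearing in the conclusion. So my proof would simply translate each lemma into the corresponding set-theoretic description of the class, and verify that the pieces in each case form the set on the right-hand side.

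For \cref{0-x-equiv-class}, I would invoke \cref{[0x]-x-not-in-domaf0} with the input $[0,x]$ (using $x \notin \dom\af_0$): it asserts that $[0,x] = [m,y]$ iff $m = 0$ and $y = \af_g(x)$ for some $g \in G$ with $x \in \dom\af_g$. Rewriting this as a set of pairs $(m,y)$ yields exactly $\{(0,\af_g(x)) : g \in G,\ x \in \dom\af_g\}$. For \cref{g-x-equiv-class}, I would similarly apply \cref{[gx]-x-not-in-imaf0} and rename the variable $m$ to $h$, reading off the set of pairs $(h,y)$ with $h \in G$, $x \in \dom\af_{h^{-1}g}$, and $y = \af_{h^{-1}g}(x)$.

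For \cref{g-x-0-x-equiv-class} I would apply \cref{[gx]-x-in-imaf0}, which splits the representatives $(m,y)$ of $[g,x]$ into two disjoint families: those with $m \in G$ and $y = x$, producing the component $G \times \{x\}$, and those with $m = 0$, $y \in \dom\af_0$, and $\af_0(y) = x$, producing the component $\{0\} \times \af_0^{-1}(x)$. The disjointness is automatic from the first coordinates (either in $G$ or equal to $0$), justifying the symbol $\sqcup$.

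I do not anticipate a real obstacle; this is essentially bookkeeping. The one sanity check worth making explicit is in case \cref{g-x-0-x-equiv-class}: that for every $h \in G$ the pair $(h,x)$ really does represent the class, which is why the first component is all of $G \times \{x\}$ (and not, say, restricted to those $h$ with $x \in \dom\af_{h^{-1}g}$). This is guaranteed by \cref{imaf0-is-invariant-by-all-m}, since $x \in \im\af_0 \subseteq \dom\af_{h^{-1}g}$ with $\af_{h^{-1}g}(x) = x$.
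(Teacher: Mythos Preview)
Your proposal is correct and follows exactly the same approach as the paper: the paper's proof simply cites \cref{[0x]-x-not-in-domaf0,[gx]-x-in-imaf0,[gx]-x-not-in-imaf0} and adds the observation that these three cases exhaust all equivalence classes in $X_M$. Your write-up is more detailed than the paper's one-line proof, and the only point you leave implicit is that exhaustiveness observation (the remaining case $m=0$, $x\in\dom\af_0$ is absorbed into \cref{g-x-0-x-equiv-class} via $[0,x]=[e,\af_0(x)]$).
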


\begin{proof}
    The result follows from \cref{[0x]-x-not-in-domaf0,[gx]-x-in-imaf0,[gx]-x-not-in-imaf0} and the observation that these are in fact all the equivalence classes in $X_M$.
\end{proof}

% \begin{rem}\label{m-x-=-0-y-ynotindomaf0}
%     If $[m,x] = [0,y]$ with $y \not\in\dom\af_0$ we have $m = 0$ and $x \not\in\dom\af_0$.
% \end{rem}

\begin{corolario}\label{remark-igualdade-de-classes-implicacao}
    Let $[m,x] = [m',x']$. If $[m,x]$ is of the form \cref{0-x-equiv-class} or \cref{g-x-equiv-class}, then there exists $g \in G$ such that $x \in \dom\af_g$ and $\af_g(x) = x'$. If $[m,x]$ is of the form \cref{g-x-0-x-equiv-class}, then $x,x' \in \dom\af_0$ and $\af_0(x) = \af_0(x')$.
\end{corolario}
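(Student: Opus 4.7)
My plan is to prove the corollary by a straightforward case analysis based on the three possible forms of the equivalence class $[m,x]$ given by \cref{equiv-classes-of-Gsqcup0}. Since $[m,x]=[m',x']$ means $(m',x')\in [m,x]$, in each case I simply read off the conclusion from the explicit description.

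First, if $[m,x]$ is of the form \cref{0-x-equiv-class}, then $m=0$, $x\notin\dom\af_0$, and $(m',x')\in [0,x]$ forces $x'=\af_g(x)$ for some $g\in G$ with $x\in\dom\af_g$; this is exactly the conclusion. If $[m,x]$ is of the form \cref{g-x-equiv-class}, then $m\in G$, $x\notin\im\af_0$, and $(m',x')\in [m,x]$ yields $m'\in G$, $x\in\dom\af_{(m')^{-1}m}$ and $x'=\af_{(m')^{-1}m}(x)$, so I may take $g=(m')^{-1}m\in G$.

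The only case requiring a small extra observation is \cref{g-x-0-x-equiv-class}, where $m\in G$ and $x\in\im\af_0$. By \cref{imaf0-is-invariant-by-all-m} applied with the index $0$, we have $\im\af_0\subseteq\dom\af_0$ and $\af_0(x)=x$ for every $x\in\im\af_0$, so in particular $x\in\dom\af_0$. Then $(m',x')\in [m,x]=(G\times\{x\})\sqcup(\{0\}\times\af_0^{-1}(x))$ splits into two subcases: either $m'\in G$ and $x'=x$, in which case $x'\in\dom\af_0$ and trivially $\af_0(x)=\af_0(x')$; or $m'=0$ and $x'\in\af_0^{-1}(x)$, in which case $x'\in\dom\af_0$ and $\af_0(x')=x=\af_0(x)$.

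There is essentially no obstacle here: the entire content of the corollary is unpacking the explicit description of $X_M$ established in \cref{equiv-classes-of-Gsqcup0}, with the only non-bookkeeping input being the fact that $\im\af_0\subseteq\dom\af_0$ together with $\af_0\vert_{\im\af_0}=\id$, both supplied by \cref{imaf0-is-invariant-by-all-m}.
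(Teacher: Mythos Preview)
Your argument is correct and is exactly the routine case analysis the paper has in mind; indeed, the paper states this corollary without proof, treating it as an immediate consequence of \cref{equiv-classes-of-Gsqcup0}. One small point: in the case \cref{g-x-0-x-equiv-class} you tacitly take $(m,x)$ to be the canonical representative with $m\in G$ and $x\in\im\af_0$, but strictly speaking $(m,x)$ could also lie in $\{0\}\times\af_0^{-1}(x_0)$ for the relevant $x_0\in\im\af_0$; your own analysis of $(m',x')$ covers this by symmetry, so the conclusion $x\in\dom\af_0$ and $\af_0(x)=\af_0(x')$ still follows.
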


\begin{corolario}\label{mx-=-nx-if-x-in-imaf0}
    Let $x \in \im\af_0$. Then $[m,x] = [n,x]$ for all $m,n \in M$.
\end{corolario}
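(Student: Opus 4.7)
The plan is to reduce everything to the observation that, when $x\in\im\af_0$, the element $x$ is simultaneously a fixed point of $\af_0$ and lies in $\af_0^{-1}(x)$, so a single $\rightharpoonup$-step links $(0,x)$ to every $(g,x)$.

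More concretely, I would proceed as follows. First, I invoke \cref{imaf0-is-invariant-by-all-m}: since $\im\af_0\subseteq\dom\af_m$ and $\af_m$ fixes every element of $\im\af_0$, in particular $x\in\dom\af_0$ and $\af_0(x)=x$. Then, for any $g\in G$, \cref{harpoon-relation-lema}\cref{harpoon-1} immediately yields $(0,x)\rightharpoonup(g,x)$, hence $[0,x]=[g,x]$. Combining this with the trivial identity $[0,x]=[0,x]$ and with the fact that, for $g,g'\in G$, $[g,x]=[0,x]=[g',x]$ by transitivity of $\approx$, we conclude $[m,x]=[n,x]$ for all $m,n\in M$.

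Alternatively, one can read the same conclusion straight off of \cref{equiv-classes-of-Gsqcup0}: for $g\in G$ the class $[g,x]$ is described by \cref{g-x-0-x-equiv-class} as $(G\times\{x\})\sqcup(\{0\}\times\af_0^{-1}(x))$, which manifestly does not depend on $g$; and since $x\in\dom\af_0$ with $\af_0(x)=x$, we have $(0,x)\in\{0\}\times\af_0^{-1}(x)$, so $[0,x]$ also coincides with this set. This is essentially a bookkeeping step, with no real obstacle — the only point to double-check is that one is entitled to apply \cref{equiv-classes-of-Gsqcup0} in the case $m=0$ as well, which is handled by the preceding remark that $x\in\im\af_0\subseteq\dom\af_0$.
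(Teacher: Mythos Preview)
Your proposal is correct, and your second argument (reading the conclusion off from \cref{equiv-classes-of-Gsqcup0} via \cref{g-x-0-x-equiv-class} and the fact that $x\in\af_0^{-1}(x)$) is exactly what the paper intends: the statement is recorded as an unproved corollary of \cref{equiv-classes-of-Gsqcup0}. Your first argument, going directly through \cref{imaf0-is-invariant-by-all-m} and \cref{harpoon-relation-lema}\cref{harpoon-1}, is a valid and slightly more elementary alternative that avoids appealing to the full classification of classes.
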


\subsection{Necessary and sufficient conditions for globalization}

Let $M$ be the monoid $G^0$, where $G$ is a group, $X$ a semigroup and $\af$ a strong partial action of $M$ on $X$. Assume that $\dom\af_g$ is an ideal of $X$ for all $g \in G$. Consider the following conditions\footnote{The choice of names for these conditions will become clear in \cref{locally-confluent-necessary-and-sufficient-conditions}.} on $\af$:
\begin{enumerate}[label=(LC\arabic*)]
    \item\label{afg1-afg((xy)z)-group} for all $g \in G$ and $x,y, z\in X$ with $y \in \dom\af_g$:
        \begin{align*}
            \af_{g^{-1}}(\af_g(xy)z) = x \af_{g^{-1}}(\af_g(y)z),
        \end{align*}

    \item\label{af0xyz-af0afgxafhyz} for all $m \in M$, $y \in \dom \af_m$ and $x,z \in X^1$:
        \begin{align*}
            xyz \in \dom\af_0 &\implies x\af_m(y) z \in \dom\af_0 \text{ and } \af_0(xyz) = \af_0(x \af_m(y)z).
        \end{align*}
\end{enumerate}

\begin{proposicao}\label{af-globalizable-implies-conditions}
    If $\af$ is globalizable, then it satisfies \cref{af0xyz-af0afgxafhyz,afg1-afg((xy)z)-group}.
\end{proposicao}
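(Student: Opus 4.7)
The plan is to fix a globalization $(\beta,\iota)$ of $\af$ — so $\iota : X \to Y$ is an injective semigroup homomorphism (by \cref{betaiota-glob-implies-iota-injective}) into some $Y \in \Sem$ carrying a global action $\beta$ of $M$ by semigroup endomorphisms — and then to translate each of \cref{afg1-afg((xy)z)-group,af0xyz-af0afgxafhyz} into an identity inside $Y$ that follows formally from the globality of $\beta$ together with the fact that every $\beta_m$ is a semigroup homomorphism.

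For \cref{afg1-afg((xy)z)-group}, I first check that both sides are well-defined: since $\dom\af_g$ is an ideal we have $xy\in\dom\af_g$; since $g$ is invertible, $\im\af_g=\dom\af_{g^{-1}}$, which is also an ideal, so $\af_g(xy)z,\af_g(y)z\in\dom\af_{g^{-1}}$. Applying $\iota$ to the left-hand side, using $\iota\circ\af_n=\beta_n\circ\iota$ on $\dom\af_n$, that $\beta_{g^{-1}}$ is a semigroup endomorphism of $Y$, and that $\beta_{g^{-1}}\circ\beta_g=\beta_e=\id_Y$, the expression telescopes to $\iota(x)\iota(y)\beta_{g^{-1}}(\iota(z))$; the right-hand side telescopes to the same element by an analogous computation. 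Injectivity of $\iota$ then yields \cref{afg1-afg((xy)z)-group}.

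For \cref{af0xyz-af0afgxafhyz}, I will compute $\beta_0(\iota(x\af_m(y)z))$ in $Y$. Using that $\beta_0$ is a semigroup homomorphism, that $\beta_0\circ\beta_m=\beta_{0m}=\beta_0$, and that $\iota(\af_m(y))=\beta_m(\iota(y))$, the expression collapses to $\beta_0(\iota(x)\iota(y)\iota(z))=\beta_0(\iota(xyz))$. Since $xyz\in\dom\af_0$, this equals $\iota(\af_0(xyz))$. Hence $\beta_0(\iota(x\af_m(y)z))\in\iota(X)$, and the defining property of a globalization in \cref{globalization-def} then gives $x\af_m(y)z\in\dom\af_0$ and $\af_0(x\af_m(y)z)=\af_0(xyz)$, as required.

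The only mild subtlety is that \cref{af0xyz-af0afgxafhyz} allows $x$ or $z$ to be the external identity $1\in X^1$ while $\iota$ is defined only on $X$; this is handled by omitting the corresponding factor $\iota(x)$ or $\iota(z)$ in the telescoping above. In the completely degenerate case $x=z=1$ the statement reduces directly to \cref{AP2 set strong,AP3 set} applied to the equation $0=0m$, and does not require the globalization at all. Beyond this bookkeeping, I expect no serious obstacle: both identities are essentially global-action identities in $Y$ pulled back along the injection $\iota$.
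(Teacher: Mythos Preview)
Your proposal is correct and follows essentially the same approach as the paper: fix a globalization $(\beta,\iota)$, push both conditions into $Y$ via $\iota$, and use that each $\beta_m$ is a semigroup endomorphism together with $\beta_{g^{-1}}\beta_g=\id_Y$ and $\beta_0\beta_m=\beta_0$. The only cosmetic difference is that for \cref{af0xyz-af0afgxafhyz} the paper invokes the reformulation $\dom\af_0=\iota^{-1}(\iota(X)\cap\beta_0^{-1}(\iota(X)))$ from \cref{betaiota-glob-implies-iota-injective}, whereas you appeal directly to the defining property in \cref{globalization-def}; these are equivalent, and your handling of the $X^1$ bookkeeping is fine.
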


\begin{proof}
    Let $(\beta,\iota)$ be a globalization of $\af$. In view of \cref{betaiota-glob-implies-iota-injective}, we will identify $x \in X$ with $\iota(x) \in \iota(X)$.
    
    Let $g \in G$ and $x,y, z \in X$ with $y \in \dom\af_g$. Then
    \begin{align*}
        \af_{g^{-1}}(\af_g(xy)z) = \bt_{g^{-1}}(\bt_g(xy)z) = \bt_{g^{-1}}(\bt_g(x)) \bt_{g^{-1}}(\bt_g(y)z) = x \af_{g^{-1}}(\af_g(y)z),
    \end{align*}
    so \cref{afg1-afg((xy)z)-group} follows.

    Now, let $m \in M$ and $x,y,z \in X^1$ with $y \in \dom \af_m$ and $xyz \in \dom\af_0$. By \cref{betaiota-glob-implies-iota-injective} we have $\dom\af_0 = X \cap \beta_0^{-1}(X)$, so $xyz \in X \cap \beta_0^{-1}(X)$ and thus
    \begin{align}\label{bt0-afgx-afhy}
        \beta_0(x \af_m(y) z) &= \beta_0(x \bt_m(y) z) = \beta_0(x) \bt_0(\bt_m(y)) \bt_0(z)\notag \\
        &= \bt_0(x) \bt_0(y) \bt_0(z) = \bt_0(xyz) \in X.
    \end{align}
    Hence, $x \af_m(y) z \in X \cap \bt_0^{-1}(X) = \dom\af_0$. Moreover, \cref{bt0-afgx-afhy} implies
    $$\af_0(xyz) = \bt_0(xyz) = \bt_0(x \af_m(y)z) = \af_0(x\af_m(y)z),$$
    concluding the proof of \cref{af0xyz-af0afgxafhyz}.
\end{proof}

\begin{rem}
    If the domains of each $\af_g$ are not necessarily ideals of $X$, it is easy to see that one can replace \cref{afg1-afg((xy)z)-group} in \cref{af-globalizable-implies-conditions} with the following condition:
\begin{itemize}
    \item[(LC1')] for all $g \in G$ and $x,y, z\in X$ with $y \in \dom\af_g$:
        \begin{align*}
            xy \in \dom\af_g, \ \af_g(y)z, \af_g(xy)z \in \dom\af_{g^{-1}} \implies \af_{g^{-1}}(\af_g(xy)z) = x \af_{g^{-1}}(\af_g(y)z).
        \end{align*}
\end{itemize}
\end{rem}

We are going to show that the converse of \cref{af-globalizable-implies-conditions} holds if the subsemigroups $\dom\af_m$ and $\im\af_m$ are ideals of $X$ for all $m \in M$. The proof will follow from \cref{lema-af0xyz-af0afgxafhyz-2-expandido,lema-f-preserves-to,map-f-from-U-to-X-well-defined,w-in-U'-goes-to-0-fw,U'-preserved-by-to,w-in-U'-is-locally-confluent,U0-goes-to-0fw,U0capU-locally-confluent,words-in-UG-are-loc-conf,reducts-of-UG-are-in-UG,UG-goes-to-0fw,element-related-to-one-that-reduces-to-0fw-does-so-too,w-in-olU'-goes-to-0fw,olU'-is-locally-confluent,two-conditions-imply-ex-unique-normal-form,reduct-of-U0-is-in-U0}, in which this stronger condition is assumed. %$\dom\alpha_m$ and $\im\alpha_m$ are ideals of $X$ for all $m \in M$.

%In the following lemmas assume that $\dom\alpha_m$ and $\im\alpha_m$ are ideals of $X$ for all $m \in M$.

%In the following lemmas, assume that $M$ is the monoid $G^0$, where $G$ is a group, and that $\dom\alpha_m$ and $\im\alpha_m$ are ideals of $X$ for all $m \in M$.

\begin{lema}\label{lema-af0xyz-af0afgxafhyz-2-expandido}
    Suppose $\af$ satisfies \cref{af0xyz-af0afgxafhyz}. If $m_i \in M$, $x_i \in \dom\af_{m_i}$, $i = 1, \dots, k$, and $x_1 \dots x_k \in \dom\af_0$, then $\af_{m_1}(x_1) \dots \af_{m_k}(x_k) \in \dom\af_0$ and
    $$\af_0(x_1\dots x_k) = \af_0(\af_{m_1}(x_1) \dots \af_{m_k}(x_k)).$$
\end{lema}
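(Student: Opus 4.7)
The plan is to prove this by a simple induction that replaces the $x_i$'s by $\af_{m_i}(x_i)$ one at a time, applying condition \cref{af0xyz-af0afgxafhyz} at each step.

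More precisely, I would show by induction on $i\in\{0,1,\dots,k\}$ the following statement $P(i)$: the product $\af_{m_1}(x_1)\dotsm\af_{m_i}(x_i)x_{i+1}\dotsm x_k$ lies in $\dom\af_0$, and
\[
\af_0\bigl(\af_{m_1}(x_1)\dotsm\af_{m_i}(x_i)x_{i+1}\dotsm x_k\bigr)=\af_0(x_1\dotsm x_k).
\]
The base case $P(0)$ is exactly the hypothesis. For the inductive step $P(i-1)\Rightarrow P(i)$, I would set
\[
x=\af_{m_1}(x_1)\dotsm\af_{m_{i-1}}(x_{i-1})\in X^1,\quad y=x_i,\quad z=x_{i+1}\dotsm x_k\in X^1,\quad m=m_i,
\]
with the convention that empty products equal $1\in X^1$ (which covers the boundary cases $i=1$ and $i=k$). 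Then $y\in\dom\af_{m_i}$ by hypothesis and $xyz\in\dom\af_0$ by $P(i-1)$, so \cref{af0xyz-af0afgxafhyz} yields $x\af_m(y)z\in\dom\af_0$ and $\af_0(xyz)=\af_0(x\af_m(y)z)$. Combined with $P(i-1)$, this gives $P(i)$. Taking $i=k$ gives the statement of the lemma.

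The only thing to be slightly careful about is the use of $X^1$ to accommodate the empty prefix at $i=1$ and empty suffix at $i=k$; condition \cref{af0xyz-af0afgxafhyz} is formulated precisely to allow this, so there is no real obstacle. Indeed, the lemma is essentially the iterated version of \cref{af0xyz-af0afgxafhyz}, and no further ideas from the paper are needed.
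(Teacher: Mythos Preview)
Your proof is correct and follows essentially the same approach as the paper: both replace the $x_i$ by $\af_{m_i}(x_i)$ one at a time via \cref{af0xyz-af0afgxafhyz}, using the $X^1$ convention to handle the boundary prefix and suffix. The paper merely writes out the first two steps and then says ``inductively, it follows what is desired,'' whereas you state the induction hypothesis $P(i)$ explicitly.
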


\begin{proof}
    By \cref{af0xyz-af0afgxafhyz}, since $1 x_1 (x_2 \dots x_k) \in \dom\af_0$ and $x_1 \in \dom\af_{m_1}$ it follows that $1 \af_{m_1}(x_1) (x_2 \dots x_k) \in \dom\af_0$ and
    $$\af_0(\af_{m_1}(x_1) x_2 \dots x_k) = \af_0(x_1 x_2 \dots x_k).$$
    Similarly, since $\af_{m_1}(x_1) x_2 (x_3 \dots x_k) \in \dom\af_0$ and $x_2 \in \dom\af_{m_2}$ we get
    $$\af_{m_1}(x_1) \af_{m_2}(x_2) (x_3 \dots x_k) \in \dom\af_0$$
    and
    $$\af_0(\af_{m_1}(x_1) \af_{m_2}(x_2) \dots x_k) = \af_0(\af_{m_1}(x_1) x_2 \dots x_k) = \af_0(x_1 x_2 \dots x_k).$$

    Inductively, it follows what is desired.
\end{proof}

Denote by $U$ the set of words of $X_M^+$ of the form $[m_1,x_1] \dots [m_k,x_k]$ such that 
\begin{equation}\label{x1x2...xk-in-domaf0}
    x_1 \dots x_k \in \dom\af_0.
\end{equation}

\begin{lema}\label{map-f-from-U-to-X-well-defined}
    % If $\af$ satisfies \cref{af0xyz-af0afgxafhyz}, then the map $f : U \to X$ given by
    % $$f([m_1,x_1][m_2,x_2]\dots[m_k,x_k]) = \af_0(x_1x_2 \dots x_k)$$
    % is well defined.

Suppose $\af$ satisfies \cref{af0xyz-af0afgxafhyz}.
    \begin{enumerate}
        \item\label{f-well-defined-item-1} Condition \cref{x1x2...xk-in-domaf0} does not depend on the choice of representatives of $[m_i,x_i]$, $i = 1, \dots, k$.

        \item\label{f-well-defined-item-2} The map $f : U \to \im \af_0 \subseteq X$ given by
    $$f([m_1,x_1]\dots[m_k,x_k]) = \af_0(x_1 \dots x_k)$$
    is well defined.
    \end{enumerate}
\end{lema}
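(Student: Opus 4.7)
The plan is to reduce both parts to a single \emph{one-step replacement} claim: whenever $[m,x]=[m',x']$ in $X_M$ and $u,v\in X^1$, one has $uxv\in\dom\af_0$ if and only if $ux'v\in\dom\af_0$, and in that case $\af_0(uxv)=\af_0(ux'v)$. Parts \cref{f-well-defined-item-1} and \cref{f-well-defined-item-2} then follow by changing the representative of $[m_i,x_i]$ one position at a time, with $u=x_1\cdots x_{i-1}$ and $v=x_{i+1}\cdots x_k$ (and $u=1$ if $i=1$, $v=1$ if $i=k$), chaining the resulting biconditionals and equalities across the $k$ positions.

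To prove the one-step replacement, I would split into two cases according to the form of $[m,x]$ given by \cref{equiv-classes-of-Gsqcup0}. If the class is of form \cref{0-x-equiv-class} or \cref{g-x-equiv-class}, then by \cref{equiv-classes-of-Gsqcup0} there is $n\in G$ with $x\in\dom\af_n$ and $x'=\af_n(x)$; by \cref{m-invertible-implies-afm-1-=-af(m-1)} also $x'\in\dom\af_{n^{-1}}$ and $x=\af_{n^{-1}}(x')$. Applying \cref{af0xyz-af0afgxafhyz} with $m=n$ and then with $m=n^{-1}$ yields both directions of the biconditional together with $\af_0(uxv)=\af_0(ux'v)$. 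If the class is of form \cref{g-x-0-x-equiv-class}, inspection of \cref{equiv-classes-of-Gsqcup0} combined with \cref{imaf0-is-invariant-by-all-m} shows that both $x$ and $x'$ lie in $\dom\af_0$ and satisfy $\af_0(x)=\af_0(x')$. Since $\dom\af_0$ is assumed to be an ideal of $X$, the products $uxv$ and $ux'v$ automatically belong to $\dom\af_0$, giving the biconditional; applying \cref{af0xyz-af0afgxafhyz} with $m=0$ at the factors $x$ and $x'$ then yields
$$\af_0(uxv)=\af_0(u\af_0(x)v)=\af_0(u\af_0(x')v)=\af_0(ux'v).$$

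The main obstacle is the second case: when $x\in\im\af_0$ but $x'\in\af_0^{-1}(x)$ does not itself lie in $\im\af_0$, one cannot express $x'$ as $\af_m(x)$ for any $m\in M$, since $\af_m(x)=x$ for every $m$ by \cref{imaf0-is-invariant-by-all-m}, so \cref{af0xyz-af0afgxafhyz} cannot be used to directly transform $x$ into $x'$. The ideal hypothesis on $\dom\af_0$ is precisely what circumvents this obstruction, by making membership of a product in $\dom\af_0$ automatic from membership of any single factor; the comparison of $\af_0$-values then proceeds through the common intermediate value $\af_0(x)=\af_0(x')$.
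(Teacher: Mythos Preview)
Your proof is correct. The paper takes a slightly different organizational route: rather than replacing one representative at a time, it chooses for each position $i$ an element $n_i\in M$ (some $g_i\in G$ when the class is of form \cref{0-x-equiv-class} or \cref{g-x-equiv-class}, and $n_i=0$ when it is of form \cref{g-x-0-x-equiv-class}, via \cref{remark-igualdade-de-classes-implicacao}) and then applies the iterated version of \cref{af0xyz-af0afgxafhyz}, namely \cref{lema-af0xyz-af0afgxafhyz-2-expandido}, to all positions simultaneously; a residual case split on whether some $n_i=0$ then finishes the argument, using the ideal hypothesis on $\dom\af_0$ in exactly the place you identify. Your one-step replacement is arguably cleaner: it avoids the global case split and the second pass through \cref{lema-af0xyz-af0afgxafhyz-2-expandido}, at the price of invoking \cref{af0xyz-af0afgxafhyz} twice (with $n$ and $n^{-1}$) to get the biconditional in the invertible case. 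Both proofs rest on the same two ingredients---the description of classes in \cref{equiv-classes-of-Gsqcup0} and the ideal property of $\dom\af_0$ to handle the asymmetric case \cref{g-x-0-x-equiv-class}---so the difference is structural rather than conceptual.
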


\begin{proof}
    Let 
    $$[m_1,x_1]\dots[m_k,x_k] = [m_1',x_1']\dots[m_k',x_k'] \in U,$$
    %with $x_1 x_2 \dots x_k \in \dom\af_0$. For each $i \in I \coloneqq \{1, \dots, k\}$, since $[m_i,x_i] = [m_i',x_i']$ by \cref{remark-igualdade-de-classes-implicacao} we have that either there exists $g_i \in G$ such that $x_i \in \dom\af_{g_i}$ and $\af_{g_i}(x_i) = x_i'$ or $x_i,x_i' \in \dom\af_0$ and $\af_0(x_i) = \af_0(x_i')$. If $x_i \in \dom\af_{g_i}$, then define $n_i = g_i$, otherwise define $n_i = 0$.
    with $x_1 \dots x_k \in \dom\af_0$. For each $i \in I \coloneqq \{1, \dots, k\}$, since $[m_i,x_i] = [m_i',x_i']$, by \cref{remark-igualdade-de-classes-implicacao} we have one of the two (not necessarily mutually exclusive) possibilities.
    \begin{enumerate}[label=(\arabic*)]
        \item\label{condition-i-f-well-defined} There exists $g_i \in G$ such that $x_i \in \dom\af_{g_i}$ and $\af_{g_i}(x_i) = x_i'$.% In this case we define $n_i = g_i$ (if there are several such $g_i$, we choose one of them).
        \item\label{condition-ii-f-well-defined} $x_i,x_i' \in \dom\af_0$ and $\af_0(x_i) = \af_0(x_i')$. %In this case we define $n_i = 0$.
    \end{enumerate}
    If \cref{condition-ii-f-well-defined} holds, we define $n_i = 0$. Otherwise, we define $n_i$ to be any of the $g_i$ satisfying \cref{condition-i-f-well-defined}.

    Then by \cref{lema-af0xyz-af0afgxafhyz-2-expandido} we have $\af_{n_1}(x_1) \dots \af_{n_k}(x_k) \in \dom\af_0$ and
    \begin{equation}\label{af0-x1x2xk-=-af0-afn1x1}
        \af_0(x_1\dots x_k) = \af_0(\af_{n_1}(x_1) \dots \af_{n_k}(x_k)).
    \end{equation}

    To complete the proof, it remains to show that $x_1' \dots x_k' \in \dom\af_0$ and the right-hand side of \cref{af0-x1x2xk-=-af0-afn1x1} equals $\af_0(x_1' \dots x_k')$. Let $J = \{i \in I : n_i = 0\}$.
    % \begin{equation}\label{af0-x1'x2'xk'-=-af0-afn1x1}
    %     \af_0(x_1' x_2' \dots x_k') = \af_0(\af_{n_1}(x_1) \af_{n_2}(x_2) \dots \af_{n_k}(x_k))
    % \end{equation}

    \textit{Case 1}. $J = \emptyset$. In this case, we immediately have $x_1' \dots x_k' = \af_{n_1}(x_1) \dots \af_{n_k}(x_k)$. 
    %$$\af_0(x_1 x_2 \dots x_k) = \af_0(x_1' x_2' \dots x_k').$$

    \textit{Case 2}. $J \neq \emptyset$. In this case, there exists $j_0 \in I$ such that $x_{j_0},x_{j_0}' \in \dom\af_0$, so $x_1' \dots x_k' \in \dom\af_0$. For each $i \in I$ let $p_i = 0$ if $i \in J$ and $p_i = e$ otherwise. Observe that if $i \in J$ then $\af_{p_i}(x_i') = \af_0(x_i') = \af_0(x_i) = \af_{n_i}(x_i)$, and if $i \in I \setminus J$ then $\af_{p_i}(x_i') = x_i' = \af_{n_i}(x_i)$. Hence, by \cref{lema-af0xyz-af0afgxafhyz-2-expandido} we have
    $$\af_0(x_1' \dots x_k') = \af_0(\af_{p_1}(x_1') \dots \af_{p_k}(x_k')) = \af_0(\af_{n_1}(x_1) \dots \af_{n_k}(x_k)).$$
\end{proof}

\begin{lema}\label{lema-f-preserves-to}
    Suppose $\af$ satisfies \cref{af0xyz-af0afgxafhyz} and $w \overset{*}{\leftrightarrow} w'$. Then $w \in U$ if and only if $w' \in U$, in which case $f(w) = f(w')$.
\end{lema}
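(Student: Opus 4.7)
Since $\overset{*}{\leftrightarrow}$ is the reflexive symmetric transitive closure of $\to$ (\cref{Rsharp-=-leftrightarrowstar}), by a straightforward induction on the length of a connecting chain $w = w_0 \leftrightarrow w_1 \leftrightarrow \dots \leftrightarrow w_n = w'$ it suffices to handle the case of a single step, and by symmetry we may assume $w \to w'$. Thus $w = u[m,x][m,y]v$ and $w' = u[m,xy]v$ for some $u, v \in X_M^*$, $m \in M$ and $x,y \in X$.

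Choose representatives so that $u = [m_1,x_1]\dots[m_i,x_i]$ and $v = [m_{i+3},x_{i+3}]\dots[m_k,x_k]$ (allowing $i = 0$ or $i + 2 = k$ to cover the cases where $u$ or $v$ is empty). The underlying product in $X$ read off the letters of $w$ is
\begin{equation*}
x_1 \dots x_i \cdot x \cdot y \cdot x_{i+3} \dots x_k,
\end{equation*}
while the one read off the letters of $w'$ is
\begin{equation*}
x_1 \dots x_i \cdot (xy) \cdot x_{i+3} \dots x_k.
\end{equation*}
By associativity of multiplication in the semigroup $X$, these are the same element of $X$.

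Consequently, the defining condition \cref{x1x2...xk-in-domaf0} for membership in $U$ holds for $w$ if and only if it holds for $w'$, giving $w \in U \iff w' \in U$; and when both lie in $U$, the definition of $f$ yields
\begin{equation*}
f(w) = \af_0(x_1 \dots x_i \cdot x \cdot y \cdot x_{i+3} \dots x_k) = \af_0(x_1 \dots x_i \cdot (xy) \cdot x_{i+3} \dots x_k) = f(w').
\end{equation*}
That the chosen representatives of the letters are immaterial is precisely the content of \cref{map-f-from-U-to-X-well-defined}, which also guarantees that membership in $U$ is intrinsic to the word. There is no substantive obstacle in this lemma: a single rewriting step collapses $[m,x][m,y]$ into $[m,xy]$, and this operation manifestly preserves the product in $X$ to which $\af_0$ is ultimately applied; hypothesis \cref{af0xyz-af0afgxafhyz} is not even needed for this step (it was only used to set up the well-definedness of $f$ in \cref{map-f-from-U-to-X-well-defined}).
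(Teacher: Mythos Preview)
Your proof is correct and follows essentially the same route as the paper: reduce to a single rewriting step by symmetry and transitivity, then observe that collapsing $[m,x][m,y]$ into $[m,xy]$ leaves the underlying product in $X$ unchanged by associativity, so both membership in $U$ and the value of $f$ are preserved. Your explicit appeal to \cref{map-f-from-U-to-X-well-defined} and the remark that \cref{af0xyz-af0afgxafhyz} enters only through the well-definedness of $f$ are accurate refinements not spelled out in the paper.
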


\begin{proof}
    Let $w, w' \in X_M^+$. Assume first that $w \to w'$. Then $w$ is of the form $[m_1,x_1] \dots [m_k,x_k]$ with $m_i = m_{i+1}$ for some $i \in \{1, \dots, k-1\}$, and $w'$ of the form $[m_1,x_1]\dots [m_i,x_i x_{i+1}] \dots [m_k,x_k]$. Then since $x_1 \dots x_k = x_1 \dots (x_i x_{i+1}) \dots x_k$, it follows that $w \in U$ if and only if $w' \in U$, in which case $f(w) = f(w')$.

    The result then follows by symmetry and transitivity.
    %Hence, whenever $w \leftrightarrow w'$ and $w \in U$ we have $w' \in U$ and $f(w) = f(w')$. Now, if $w \overset{*}{\leftrightarrow} w'$ and $w \in U$, there exists a chain $w = w_1 \leftrightarrow w_2 \leftrightarrow \dots \leftrightarrow w_n = w'$. Inductively, we then have $w_i \in U$ for all $i$ and $f(w) = f(w_2) = \dots = f(w_i)$. In particular, $w' \in U$ and $f(w) = f(w')$, as desired.
\end{proof}

Let $U'$ be the ideal of $X_M^+$ generated by elements of the form $[0,x]$ with $x \in \dom\af_0$. Notice that $U' \subseteq U$ because $\dom\af_0$ is an ideal of $X$.

\begin{lema}\label{w-in-U'-goes-to-0-fw}
    Suppose $\af$ satisfies \cref{af0xyz-af0afgxafhyz}. If $w \in U'$, then $w \overset{*}{\to} [0,f(w)]$.
\end{lema}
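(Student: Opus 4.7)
The plan is to proceed by induction on the length $k = |w|$, with a single-step ``absorption'' claim as the core. The base case $k = 1$ is immediate: any $w \in U'$ of length one must be $w = [0, x]$ with $x \in \dom\af_0$, and then $[0, x] = [0, \af_0(x)] = [0, f(w)]$ in $X_M$ (using \cref{harpoon-relation-lema}\cref{harpoon-2}), so the claim holds trivially.

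The key absorption step to prove is: for any $x \in \dom\af_0$, any $m \in M$ and any $x' \in X$,
\begin{equation*}
[0, x][m, x'] \to [0, \af_0(xx')]
\end{equation*}
in a single $\to$-step (and symmetrically for $[m, x'][0, x]$). I would prove this in three moves. First, relabel $[0, x] = [m, \af_0(x)]$: the identification $[0,x]=[0,\af_0(x)]$ follows from $x \in \dom\af_0$, and $\af_0(x) \in \im\af_0$ then allows \cref{mx-=-nx-if-x-in-imaf0} to swap the first coordinate to any $m$. Second, execute the honest rewriting step $[m, \af_0(x)][m, x'] \to [m, \af_0(x)x']$. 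Third, relabel the outcome as $[0, \af_0(xx')]$, using that $\af_0(x)x' \in \im\af_0$ (since $\im\af_0$ is an ideal containing $\af_0(x)$), that $\im\af_0 \subseteq \dom\af_0$ by \cref{imaf0-is-invariant-by-all-m}, and that \cref{af0xyz-af0afgxafhyz} yields $\af_0(\af_0(x)x') = \af_0(xx')$.

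Granted the absorption, the induction closes quickly. For $w = [m_1, x_1] \dots [m_k, x_k] \in U'$ with $k \geq 2$, I would choose the representation so that some $m_{j_0} = 0$ and $x_{j_0} \in \dom\af_0$ (this is possible because $U'$ is generated, as an ideal, by such letters). Then apply the absorption to the pair $[0, x_{j_0}][m_{j_0+1}, x_{j_0+1}]$ if $j_0 < k$, or to $[m_{k-1}, x_{k-1}][0, x_k]$ if $j_0 = k$. In either case $w \to w'$ with $|w'| = k - 1$, the new letter is of the form $[0, \af_0(\cdot)]$ with second coordinate in $\im\af_0 \subseteq \dom\af_0$ so $w' \in U'$, and \cref{lema-f-preserves-to} gives $f(w') = f(w)$. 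The inductive hypothesis then delivers $w' \overset{*}{\to} [0, f(w)]$, and concatenation finishes.

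The step I expect to be most delicate is the third move of the absorption, which routes the identification $[m, \af_0(x)x'] = [0, \af_0(xx')]$ through the intermediate $[0, \af_0(x)x'] = [0, \af_0(\af_0(x)x')]$: the first equality is a relabeling via \cref{mx-=-nx-if-x-in-imaf0}, the second uses $\af_0(x)x' \in \dom\af_0$, and the final passage to $\af_0(xx')$ is exactly where \cref{af0xyz-af0afgxafhyz} enters essentially. Everything else — the compatibility of $f$ under a $\to$-step, the preservation of $U'$ by the absorbed letter, and the induction mechanics — is bookkeeping.
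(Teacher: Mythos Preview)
Your proof is correct and follows essentially the same strategy as the paper's: induction on $|w|$, with the core being the ``absorption'' computation $[0,x][m,x'] = [m,\af_0(x)][m,x'] \to [m,\af_0(x)x'] = [0,\af_0(xx')]$ via \cref{mx-=-nx-if-x-in-imaf0}, the ideal property of $\im\af_0$, and \cref{af0xyz-af0afgxafhyz}. The only organizational difference is that the paper peels off an end letter, applies the induction hypothesis to the remaining length-$(k-1)$ subword in $U'$, and then performs the absorption, whereas you absorb first at the distinguished letter and then invoke the induction hypothesis together with \cref{lema-f-preserves-to}; both routes rest on the identical computation.
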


\begin{proof}
    We prove by induction on $|w|$.

    Base case: $w = [0,x]$ with $x \in \dom\af_0$. Then $w = [0,x] = [0,\af_0(x)] = [0,f(w)]$.

    Inductively, suppose that for all $w \in U'$ with $|w| = k-1$ we have $w \overset{*}{\to} [0,f(w)]$, and let $w = [m_1,x_1] \dots [m_k,x_k] \in U'$. Then either $[m_1,x_1] \dots [m_{k-1},x_{k-1}] \in U'$ or $[m_2,x_2] \dots [m_k,x_k] \in U'$. We will prove that $w \overset{*}{\to} [0,f(w)]$ in the case where $w' = [m_1,x_1] \dots [m_{k-1},x_{k-1}] \in U'$, as the other follows similarly. Since $|w'| = k-1$, by the induction hypothesis $w' \overset{*}{\to} [0,f(w')]$. Then by \cref{mx-=-nx-if-x-in-imaf0} and the facts that $\im\af_0$ is an ideal of $X$ and $f(w')\in\im\af_0$ followed by \cref{lema-af0xyz-af0afgxafhyz-2-expandido} we have
    \begin{align*}
            w &= w' [m_k,x_k] \overset{*}{\to} [0,f(w')][m_k,x_k] = [m_k,f(w')][m_k, x_k] \to [m_k,f(w')x_k] \\
            &= [0,f(w')x_k] = [0,\af_0(f(w')x_k)] = [0,\af_0(\af_0(x_1 \dots x_{k-1}) x_k)] \\
            &= [0,\af_0(x_1 \dots x_k)] = [0,f(w)].
    \end{align*}
    % If $[m_k, x_k]$ is of the form $[0,x]$ with $x \in \dom\af_0$, then a similar argument is applied to $w' = [m_2, x_2] \dots [m_k,x_k] \in U'$, and we again have $w \overset{*}{\to} [0,f(w)]$.

    Hence, the result follows by induction.
\end{proof}

\begin{lema}\label{U'-preserved-by-to}
    Let $w \to w'$. If $w \in U'$, then $w' \in U'$.
\end{lema}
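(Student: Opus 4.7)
The plan is a case analysis on how the rewriting step interacts with the distinguished letter that witnesses $w\in U'$. Since $U'$ is the ideal of $X_M^+$ generated by letters of the form $[0,x]$ with $x\in\dom\af_0$, I can write $w=a\,c\,b$ for some $a,b\in X_M^*$ and a letter $c=[0,x_0]\in X_M$ with $x_0\in\dom\af_0$. A single rewriting step $w\to w'$ replaces two adjacent letters $[m,y][m,z]$ of $w$ by $[m,yz]$.

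If the two letters being rewritten both lie inside $a$ or both inside $b$, then $c$ appears unchanged in $w'$ and one immediately has $w'\in U'$. The only non-trivial situation is therefore when one of the rewritten letters is $c$ itself, so that (by left/right symmetry) the neighbor of $c$ on its right side gets absorbed: $b=c'\,b''$ with representations $c=[m,y]$ and $c'=[m,z]$, and $w'=a\,[m,yz]\,b''$. To finish the proof I would need to understand which representations $[m,y]$ of $c=[0,x_0]$ are possible, and this is the main obstacle.

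The key ingredient is that $x_0\in\dom\af_0$ implies $\af_0(x_0)\in\im\af_0$, and so by \cref{mx-=-nx-if-x-in-imaf0} the letter $c$ equals $[g,\af_0(x_0)]$ for every $g\in M$. Combining this identification with the description in \cref{equiv-classes-of-Gsqcup0}, and using \cref{0a-0b-harpoon-relation} in the case $m=0$, I would argue that the second entry $y$ in any representation $[m,y]$ of $c$ satisfies $y=\af_0(x_0)\in\im\af_0$ when $m\in G$, and $y\in\dom\af_0$ when $m=0$.

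Finally, since $\im\af_0\subseteq\dom\af_0$ by \cref{imaf0-is-invariant-by-all-m} and both $\im\af_0$ and $\dom\af_0$ are ideals of $X$ by the standing hypothesis, the element $yz$ lies in $\dom\af_0$ in either sub-case; and if $m\in G$, a further application of \cref{mx-=-nx-if-x-in-imaf0} rewrites $[m,yz]$ as $[0,yz]$, since $yz\in\im\af_0$. In every sub-case $w'$ contains a letter of the form $[0,x']$ with $x'\in\dom\af_0$, so $w'\in U'$. No computation beyond what the cited classification lemmas already provide is required; the whole argument is a careful bookkeeping of representations of the distinguished letter.
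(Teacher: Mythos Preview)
Your proof is correct and follows essentially the same route as the paper's: both arguments reduce to the case where the distinguished letter $[0,x_0]$ (with $x_0\in\dom\af_0$) is one of the two letters being merged, and then split into the subcases $m\in G$ (use $\im\af_0$ is an ideal and \cref{mx-=-nx-if-x-in-imaf0}) versus $m=0$ (use $\dom\af_0$ is an ideal). The only cosmetic difference is that the paper starts from the rewriting step and locates the witness, whereas you start from the witness and locate the rewriting step; also, your appeal to \cref{0a-0b-harpoon-relation} for the $m=0$ case is unnecessary, since the needed fact ($y\in\dom\af_0$) follows directly from the description \cref{g-x-0-x-equiv-class} in \cref{equiv-classes-of-Gsqcup0}.
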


\begin{proof}
    Since $w \to w'$, there exist $m \in M$, $x,y \in X$ and $u,v \in X_M^*$ such that $w = u[m,x][m,y]v$ and $w' = u[m,xy]v$. If $u$ or $v$ is in $U'$, then it is immediate that $w' \in U'$, so assume $u,v \not\in U'$.
    
    Since $w \in U'$, either $[m,x] = [0,x']$ for some $x' \in \dom\af_0$ or $[m,y] = [0,y']$ for some $y' \in \dom\af_0$. Without loss of generality, assume that $[m,x] = [0,x']$ for some $x' \in \dom\af_0$. We now separate into cases.

    \textit{Case 1}. $m \in G$. Then by \cref{equiv-classes-of-Gsqcup0} we have $x \in \im\af_0$, so $xy \in \im\af_0$ because $\im\af_0$ is an ideal of $X$. Hence, by \cref{mx-=-nx-if-x-in-imaf0} we get
    $$w' = u[m,xy]v = u[0,xy]v \in U'.$$

    \textit{Case 2}. $m = 0$. Then by \cref{remark-igualdade-de-classes-implicacao} we have $x \in \dom\af_0$, so $xy \in \dom\af_0$ because $\dom\af_0$ is an ideal of $X$. Hence, $w' = u[0,xy]v \in U'$.
\end{proof}

\begin{lema}\label{w-in-U'-is-locally-confluent}
    Suppose $\af$ satisfies \cref{af0xyz-af0afgxafhyz}. If $w \in U'$, then $w$ is locally confluent.
\end{lema}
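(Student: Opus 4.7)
The plan is to combine the three preceding lemmas in a direct way: $U'$ is closed under $\to$ (by \cref{U'-preserved-by-to}), every element of $U'$ reduces to the normal-form word $[0,f(w)]$ (by \cref{w-in-U'-goes-to-0-fw}), and $f$ is invariant under $\overset{*}{\leftrightarrow}$ on $U\supseteq U'$ (by \cref{lema-f-preserves-to}). So any two one-step reducts of $w$ are joinable through this common ``canonical reduct''.

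More concretely, I would take $w_1,w_2\in X_M^+$ with $w_1\leftarrow w\to w_2$. Since $w\in U'$, \cref{U'-preserved-by-to} gives $w_1,w_2\in U'$. Applying \cref{w-in-U'-goes-to-0-fw} to each, I obtain
\begin{equation*}
w_1\overset{*}{\to}[0,f(w_1)]\quad\text{and}\quad w_2\overset{*}{\to}[0,f(w_2)].
\end{equation*}
Because $w\to w_i$ implies $w\overset{*}{\leftrightarrow}w_i$ and $U'\subseteq U$, \cref{lema-f-preserves-to} yields $f(w_1)=f(w)=f(w_2)$. Hence $[0,f(w_1)]=[0,f(w_2)]$, which witnesses $w_1\downarrow w_2$. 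This shows that $w$ is locally confluent.

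I do not expect any obstacle here: the three preceding lemmas were set up precisely to make this argument immediate. The only thing to double-check is the implicit use of $U'\subseteq U$ (so that $f$ is defined on $w_1,w_2$), which follows from the fact that $\dom\af_0$ is an ideal of $X$, noted right before \cref{w-in-U'-goes-to-0-fw}.
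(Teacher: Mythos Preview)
Your proposal is correct and matches the paper's proof essentially line for line: the paper also takes $w_1\leftarrow w\to w_2$, uses \cref{U'-preserved-by-to} to get $w_1,w_2\in U'$, applies \cref{w-in-U'-goes-to-0-fw} to obtain $w_i\overset{*}{\to}[0,f(w_i)]$, and then invokes \cref{lema-f-preserves-to} to conclude $f(w_1)=f(w_2)$ and hence $w_1\downarrow w_2$. The only cosmetic difference is that the paper writes $w_1\overset{*}{\leftrightarrow}w_2$ directly rather than passing through $f(w)$.
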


\begin{proof}
    Let $w_1, w_2 \in X_M^+$ be such that $w_1 \leftarrow w \to w_2$. By \cref{U'-preserved-by-to} it follows that $w_1, w_2 \in U'$. Then by \cref{w-in-U'-goes-to-0-fw} we have $w_1 \overset{*}{\to} [0,f(w_1)]$ and $w_2 \overset{*}{\to} [0,f(w_2)]$. Since $w_1 \overset{*}{\leftrightarrow} w_2$, then $f(w_1) = f(w_2)$ by \cref{lema-f-preserves-to}. Hence, $[0,f(w_1)] = [0,f(w_2)]$, so that $w_1 \downarrow w_2$, as desired.
\end{proof}

Let $U_0$ and $U_G$ be the sets of words of $X_M^+$ of the form $[0,x_1]\dots [0,x_k]$ and $[g_1,x_1]\dots [g_k,x_k]$ with $g_1, \dots, g_k \in G$, respectively.

\begin{lema}\label{U0-goes-to-0fw}
    Suppose $\af$ satisfies \cref{af0xyz-af0afgxafhyz}. If $w \in U_0 \cap U$, then $w \overset{*}{\to} [0,f(w)]$.
\end{lema}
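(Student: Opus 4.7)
The plan is to proceed by induction on the length $k = |w|$, exploiting the crucial feature of $U_0$ that all letters share the same first coordinate $0$, so the rewriting rule of \cref{definicao-rewriting-system-arrow-on-SY} always applies to any consecutive pair of letters.

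For the base case $k = 1$, we have $w = [0,x_1]$ with $x_1 \in \dom\af_0$ (from $w \in U$), and by the definition of $\approx$ (using $0 = 0 \cdot 0$ and $\af_0(x_1) \in \im\af_0$) we get $[0,x_1] = [0,\af_0(x_1)] = [0,f(w)]$, so $w \overset{*}{\to} [0,f(w)]$ in zero steps.

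For the inductive step, let $w = [0,x_1]\dots[0,x_k] \in U_0 \cap U$ with $k \geq 2$. Apply the rewriting rule to the last two letters to obtain
\begin{equation*}
    w \to w' \coloneqq [0,x_1]\dots [0,x_{k-2}][0,x_{k-1}x_k].
\end{equation*}
Clearly $w' \in U_0$, and since $x_1\dots x_{k-2}(x_{k-1}x_k) = x_1\dots x_k \in \dom\af_0$, also $w' \in U$. Since $|w'| = k-1$, by the induction hypothesis $w' \overset{*}{\to} [0,f(w')]$, and by \cref{lema-f-preserves-to} (or directly from the definition of $f$) we have $f(w') = f(w)$. Chaining gives $w \to w' \overset{*}{\to} [0,f(w)]$.

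There is no real obstacle here: the hypothesis that every letter has first coordinate $0$ ensures that the pairwise reduction rule is always available, so unlike the more delicate \cref{w-in-U'-goes-to-0-fw} (where one needed to first absorb a non-zero letter into a class lying in $\im\af_0$ via \cref{mx-=-nx-if-x-in-imaf0}) no case analysis on $m_i$ or invocation of \cref{af0xyz-af0afgxafhyz} is required. Condition \cref{af0xyz-af0afgxafhyz} only enters implicitly through \cref{map-f-from-U-to-X-well-defined}, which guarantees that $f(w)$ is well defined and that $f(w') = f(w)$.
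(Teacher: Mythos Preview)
Your proof is correct and follows essentially the same approach as the paper: both exploit that all letters share first coordinate $0$, so the rewriting rule applies repeatedly until one reaches $[0,x_1\dots x_k]=[0,\af_0(x_1\dots x_k)]=[0,f(w)]$. The only difference is cosmetic---the paper asserts the reduction $w\overset{*}{\to}[0,x_1\dots x_k]$ in one line rather than unwinding it as an induction on $|w|$.
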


\begin{proof}
    Since $w \in U_0$, it is of the form $[0,x_1]\dots [0,x_k]$, and since $w \in U$ we have $x_1 \dots x_k \in \dom\af_0$, so it follows that
    $$w \overset{*}{\to} [0,x_1 \dots x_k] = [0,\af_0(x_1 \dots x_k)] = [0,f(w)].$$
\end{proof}

\begin{lema}\label{reduct-of-U0-is-in-U0}
    Let $w \to w'$. If $w \in U_0$, then $w' \in U_0$.
\end{lema}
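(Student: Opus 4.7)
My plan is to unpack the reduction $w \to w'$ using \cref{definicao-rewriting-system-arrow-on-SY}: write $w = u[m,x][m,y]v$ and $w' = u[m,xy]v$ for suitable $u,v \in X_M^*$, $m \in M$ and $x,y \in X$. Since $w \in U_0$, every letter of $w$ is an equivalence class of the form $[0,\cdot]$. In particular, the letters making up $u$ and $v$ are already of that shape, so $u$ and $v$ contribute no obstruction. The entire task therefore reduces to verifying that the merged letter $[m,xy]$ also admits a representative whose first coordinate is $0$.

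I would split on $m$. If $m = 0$, then $[m,xy] = [0,xy]$ tautologically, so $w' \in U_0$. The substantive case is $m = g \in G$. Since $w \in U_0$, we have $[g,x] = [0,x']$ for some $x' \in X$. Looking at the classification provided by \cref{equiv-classes-of-Gsqcup0}, a class $[g,x]$ with $g \in G$ is of the form \cref{g-x-equiv-class} when $x \notin \im\af_0$, and this form contains no pair with first coordinate $0$; hence the equality $[g,x] = [0,x']$ forces $x \in \im\af_0$, with the class having the form \cref{g-x-0-x-equiv-class}. The same argument, applied to $[g,y]$, yields $y \in \im\af_0$.

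Using now the standing hypothesis of the subsection that $\im\af_0$ is an ideal of $X$ (and therefore a subsemigroup), I get $xy \in \im\af_0$. By \cref{mx-=-nx-if-x-in-imaf0}, this gives $[g,xy] = [0,xy]$. Consequently $w' = u[0,xy]v$, which is a product of letters of the form $[0,\cdot]$, so $w' \in U_0$.

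The argument is essentially a case analysis driven by the explicit description of the equivalence classes in \cref{equiv-classes-of-Gsqcup0}; no real calculation is required. The only subtle point — and the step I would flag as the main obstacle — is the case $m \in G$, where one must observe that the coincidence $[g,x] = [0,x']$ can occur only through form \cref{g-x-0-x-equiv-class}, hence forcing $x,y \in \im\af_0$ before one can collapse the product back to a class with a $0$-representative via \cref{mx-=-nx-if-x-in-imaf0}.
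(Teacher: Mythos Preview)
Your proof is correct and follows essentially the same approach as the paper's: both unpack the reduction, handle $u,v$ trivially, split on whether $m=0$ or $m\in G$, and in the latter case use the classification of \cref{equiv-classes-of-Gsqcup0} to force $x,y\in\im\af_0$, then invoke \cref{mx-=-nx-if-x-in-imaf0}. The only cosmetic difference is that you spell out why form \cref{g-x-equiv-class} is excluded, whereas the paper states directly that the class must be of form \cref{g-x-0-x-equiv-class}.
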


\begin{proof}
    Since $w \to w'$, there exist $u,v\in X_M^*,\ m\in M$ and $x,y\in X$ such that $w = u[m,x][m,y]v$ and $w' = u[m,xy]v$. It follows from $w \in U_0$ that $u,v \in U_0$. If $m = 0$, then $w' \in U_0$, so assume $m \in G$.
    
    Then $w \in U_0$ implies that $[m,x]$ and $[m,y]$ are of the form \cref{g-x-0-x-equiv-class}, so $x,y \in \im\af_0$. Since $\im\af_0$ is a subsemigroup of $X$, then $xy \in \im\af_0$, so by \cref{mx-=-nx-if-x-in-imaf0} we have $[m,xy] = [0,xy]$, and thus $w' \in U_0$.
\end{proof}

\begin{lema}\label{U0capU-locally-confluent}
    Suppose $\af$ satisfies \cref{af0xyz-af0afgxafhyz}. If $w \in U_0 \cap U$, then $w$ is locally confluent.
\end{lema}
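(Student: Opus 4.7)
The plan is to adapt the proof of \cref{w-in-U'-is-locally-confluent} almost verbatim, with $U'$ replaced by $U_0 \cap U$ and \cref{w-in-U'-goes-to-0-fw} replaced by \cref{U0-goes-to-0fw}. The only preliminary point that needs checking is that $U_0 \cap U$ is preserved by the reduction relation $\to$: if $w \in U_0 \cap U$ and $w \to w'$, then $w' \in U_0$ by \cref{reduct-of-U0-is-in-U0}, and $w' \in U$ by \cref{lema-f-preserves-to} (since $w \overset{*}{\leftrightarrow} w'$), with moreover $f(w') = f(w)$.

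With that in hand, I would take an arbitrary $w \in U_0 \cap U$ and any $w_1, w_2 \in X_M^+$ with $w_1 \leftarrow w \to w_2$. By the preservation observation, both $w_1$ and $w_2$ lie in $U_0 \cap U$, and $f(w_1) = f(w) = f(w_2)$. Applying \cref{U0-goes-to-0fw} to each gives
\begin{equation*}
    w_1 \overset{*}{\to} [0,f(w_1)] = [0,f(w_2)] \overset{*}{\leftarrow} w_2,
\end{equation*}
so $w_1 \downarrow w_2$, which is exactly what is needed.

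There is no real obstacle here: the whole argument is a formal reassembly of the three lemmas immediately preceding the statement (\cref{U0-goes-to-0fw}, \cref{reduct-of-U0-is-in-U0}, and \cref{lema-f-preserves-to}), exactly mirroring how \cref{w-in-U'-is-locally-confluent} was obtained from \cref{w-in-U'-goes-to-0-fw,U'-preserved-by-to,lema-f-preserves-to}. The condition \cref{af0xyz-af0afgxafhyz} enters only indirectly, through the hypotheses of \cref{U0-goes-to-0fw} and \cref{lema-f-preserves-to}.
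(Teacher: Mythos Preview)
Your proposal is correct and matches the paper's proof essentially verbatim: the paper invokes \cref{reduct-of-U0-is-in-U0} and \cref{lema-f-preserves-to} to get $w_1,w_2\in U_0\cap U$ with $f(w_1)=f(w)=f(w_2)$, then applies \cref{U0-goes-to-0fw} to obtain the same displayed chain $w_1 \overset{*}{\to} [0,f(w_1)] = [0,f(w_2)] \overset{*}{\leftarrow} w_2$.
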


\begin{proof}
    Let $w \in U_0 \cap U$ and $w_1,w_2 \in X_M^+$ be such that $w_1 \leftarrow w \to w_2$. Then by \cref{lema-f-preserves-to,reduct-of-U0-is-in-U0} we have $w_1, w_2 \in U_0 \cap U$, and together with \cref{U0-goes-to-0fw} this implies
    $$w_1 \overset{*}{\to} [0,f(w_1)] = [0,f(w)] = [0,f(w_2)] \overset{*}{\leftarrow} w_2,$$
    giving us $w_1 \downarrow w_2$. Hence, $w$ is locally confluent.
\end{proof}

\begin{lema}\label{words-in-UG-are-loc-conf}
    Suppose $\af$ satisfies \cref{af0xyz-af0afgxafhyz,afg1-afg((xy)z)-group}. If $w \in U_G$, then $w$ is locally confluent.
\end{lema}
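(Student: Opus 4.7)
The plan is to adapt the case analysis in the proof of \cref{locally-confluent-iff-2-3-letter-words-are-locally-confluent} to $U_G$. First I would establish the preservation property that any reduct of a $U_G$-word is again in $U_G$: contractions $[m, x][m, y] \to [m, xy]$ with $m \in G$ preserve $U_G$ trivially; for $m = 0$, since the contracting letters arise from $U_G$-letters $[g_i, a_i], [g_{i+1}, a_{i+1}]$, \cref{[gx]-x-in-imaf0} forces $a_i, a_{i+1} \in \im\af_0$ and the representatives $x, y \in \dom\af_0$, so $xy \in \dom\af_0$ (as $\dom\af_0$ is an ideal) and $[0, xy] = [0, \af_0(xy)] = [g, \af_0(xy)]$ for any $g \in G$ by \cref{mx-=-nx-if-x-in-imaf0}. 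With this in hand, the case breakdown of \cref{locally-confluent-iff-2-3-letter-words-are-locally-confluent} runs inside $U_G$, reducing the problem to local confluence of length-2 and length-3 words of $U_G$.

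The length-2 case $w = [g_1, a_1][g_2, a_2]$ is handled directly: two generic reducts $[m, b_1 b_2]$ and $[m', b_1' b_2']$ (with $[g_i, a_i] = [m, b_i] = [m', b_i']$; WLOG $m, m' \in G$ using the preservation observation) satisfy $b_i' = \af_{m'^{-1}m}(b_i)$ by \cref{AP3 set}, and since $\dom\af_{m'^{-1}m}$ is an ideal and $\af_{m'^{-1}m}$ is a semigroup homomorphism there, $\af_{m'^{-1}m}(b_1 b_2) = b_1' b_2'$ gives $[m, b_1 b_2] = [m', b_1' b_2']$ in $X_M$.

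The main calculation is the length-3 overlapping case $w = [g_1, a_1][g_2, a_2][g_3, a_3]$ with reductions at positions 1--2 and 2--3, giving $R_1 = [m, b_1 b_2][g_3, a_3]$ and $R_2 = [g_1, a_1][m', c_2 c_3]$ where $b_i = \af_{m^{-1}g_i}(a_i)$, $c_j = \af_{m'^{-1}g_j}(a_j)$, and $m, m' \in G$ WLOG. Using $a_2 \in \dom\af_{m^{-1}g_2} \cap \dom\af_{m'^{-1}g_2}$ and the ideal hypothesis, both reducts admit further reductions to length-1 words $R_1 \to [m', \af_{m'^{-1}m}(b_1 b_2) c_3]$ and $R_2 \to [m, b_1 \af_{m^{-1}m'}(c_2 c_3)]$. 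Setting $h = m^{-1}m' \in G$ and applying \cref{afg1-afg((xy)z)-group} with $g = h^{-1}$, $x = b_1$, $y = b_2$, $z = c_3$, together with the identity $c_2 = \af_{h^{-1}}(b_2)$ from \cref{AP3 set}, I would derive $\af_{h^{-1}}(b_1 b_2) c_3 = \af_{h^{-1}}(b_1 \af_h(c_2 c_3))$, which shows the two single-letter reducts define the same element of $X_M$. The same-position length-3 subcase follows from the length-2 case.

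The main obstacle is identifying \cref{afg1-afg((xy)z)-group} as the precise combinatorial identity that closes the length-3 overlap, and then handling the bookkeeping of domain conditions --- each \cref{AP3 set} application requires verifying membership in specific domains $\dom\af_{m^{-1}m'}$, $\dom\af_{m'^{-1}m}$, which relies critically on the hypothesis that every $\dom\af_m$ and $\im\af_m$ is an ideal.
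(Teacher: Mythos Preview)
Your argument is correct and takes a genuinely different route from the paper's. The paper disposes of the lemma in two lines by splitting on membership in $U'$: if $w\in U'$ it invokes \cref{w-in-U'-is-locally-confluent} (which relies on \cref{af0xyz-af0afgxafhyz}); if $w\notin U'$ then every letter of $w$ is of the form \cref{g-x-equiv-class}, so all representatives have first coordinate in $G$ and the computation is literally the group case, for which the paper cites \cite[Theorem~5.1]{KN16}. You instead establish closure of $U_G$ under $\to$ (this is the content of \cref{reducts-of-UG-are-in-UG}, which the paper states only \emph{after} the present lemma), run the \cref{locally-confluent-iff-2-3-letter-words-are-locally-confluent} reduction inside $U_G$, and then carry out the length-$2$ and overlapping length-$3$ computations directly, absorbing the $m=0$ possibility into ``WLOG $m\in G$'' via \cref{mx-=-nx-if-x-in-imaf0} and \cref{imaf0-is-invariant-by-all-m}. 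Your approach is self-contained (no appeal to \cite{KN16}) and in fact shows that the hypothesis \cref{af0xyz-af0afgxafhyz} is not needed here---only \cref{afg1-afg((xy)z)-group} and the ideal assumptions enter your argument. The paper's route is shorter on the page but leans on the \cref{af0xyz-af0afgxafhyz}-dependent machinery already built; yours is longer but more elementary and proves a slightly stronger statement.
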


\begin{proof}
    % If $w \in U'$, then $w$ is locally confluent by \cref{w-in-U'-is-locally-confluent}. So assume that $w \not\in U'$ and let $w_1, w_2 \in X_M^+$ be such that $w_1 \leftarrow w \to w_2$. By \cref{equiv-classes-of-Gsqcup0}, any letter of $w$ is of the form $[g,x]$, where $x\not\in\im\af_0$. The local confluence of $w$ is then proved in the same way as was done in the group case (see \cite[Theorem 5.1]{KN16}).

    If $w \in U'$, then $w$ is locally confluent by \cref{w-in-U'-is-locally-confluent}, so assume that $w \not\in U'$. Then any letter of $w$ is of the form \cref{g-x-equiv-class}. The local confluence of $w$ is then proved in the same way as was done in the group case (see \cite[Theorem 5.1]{KN16}).
\end{proof}

% \begin{lema}\label{words-in-UG-are-loc-conf}
%     Suppose $\af$ satisfies \cref{af0xyz-af0afgxafhyz,afg1-afg((xy)z)-group} and let $w \in U_G$. Then $w$ is locally confluent.
% \end{lema}

% \begin{proof}
%     %Let $w_1, w_2 \in X_M^+$ be such that $w \to w_1, w_2$. Then there exist $u_1,u_2,v_1,v_2 \in X_M^*$, $m_1,m_2 \in M$ and $x_1,x_2,y_1,y_2 \in X$ such that $w = u_1[m_1,x_1][m_1,y_1]v_1 = u_2[m_2,x_2][m_2,y_2]v_2$, $w_1 = u_1[m_1,x_1y_1]v_1$ and $w_2 = u_2[m_2,x_2y_2]v_2$.

%     It follows from \cref{two-and-three-letter-words-in-UG-are-loc-conf}.
% \end{proof}

\begin{lema}\label{reducts-of-UG-are-in-UG}
    Let $w \to w'$. If $w \in U_G$, then $w' \in U_G$.
\end{lema}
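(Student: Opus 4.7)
The plan is to unpack a rewrite step into its standard form and then to use the classification of equivalence classes provided by \cref{equiv-classes-of-Gsqcup0}. Write $w = u[m,x][m,y]v$ for some $u,v \in X_M^*$, $m \in M$, $x,y \in X$, so that $w' = u[m,xy]v$. Since membership in $U_G$ is a letter-by-letter condition, and $u$ and $v$ (when nonempty) automatically consist of letters that are already known to be in $U_G$, it suffices to show that the single letter $[m,xy]$ belongs to $U_G$.

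If $m \in G$, the claim is immediate, since the representative $(m,xy)$ has first coordinate in $G$ by construction. The only nontrivial case is $m = 0$, where I need to exploit the fact that the letters $[0,x]$ and $[0,y]$ of $w$ are themselves in $U_G$. Consulting \cref{equiv-classes-of-Gsqcup0}, a class $[0,x]$ admits a representative with first coordinate in $G$ only when it is of the form \cref{g-x-0-x-equiv-class}; checking that description forces $x \in \dom\af_0$. The same reasoning applied to $[0,y]$ yields $y \in \dom\af_0$. The standing hypothesis that $\dom\af_0$ is an ideal then gives $xy \in \dom\af_0$, and \cref{harpoon-relation-lema}\cref{harpoon-1} furnishes $(0,xy) \rightharpoonup (g,\af_0(xy))$ for every $g \in G$. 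Hence $[0,xy] = [g,\af_0(xy)]$ is again of the form \cref{g-x-0-x-equiv-class}, so $[0,xy] \in U_G$, and therefore $w' \in U_G$.

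The only real subtlety is the translation, in the case $m = 0$, from the abstract statement ``$[0,x]$ has a representative in $G \times X$'' back to the concrete condition $x \in \dom\af_0$; once this is read off from \cref{equiv-classes-of-Gsqcup0}, the ideal property of $\dom\af_0$ closes the argument. No use is made of \cref{afg1-afg((xy)z)-group} or \cref{af0xyz-af0afgxafhyz}, so the statement is purely a structural consequence of the assumption that domains and images are ideals.
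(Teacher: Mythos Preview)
Your proof is correct and follows essentially the same approach as the paper: write $w=u[m,x][m,y]v$, handle $m\in G$ trivially, and for $m=0$ use \cref{equiv-classes-of-Gsqcup0} to extract $x,y\in\dom\af_0$, conclude $xy\in\dom\af_0$, and deduce $[0,xy]=[e,\af_0(xy)]\in U_G$. The only cosmetic differences are that the paper invokes merely the subsemigroup property of $\dom\af_0$ (sufficient here) rather than the ideal property, and writes $[0,xy]=[e,\af_0(xy)]$ directly instead of citing \cref{harpoon-relation-lema}\cref{harpoon-1}.
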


\begin{proof}
    Since $w \to w'$, there exist $u,v\in X_M^*,\ m\in M$ and $x,y\in X$ such that $w = u[m,x][m,y]v$ and $w' = u[m,xy]v$. It follows from $w \in U_G$ that $u,v \in U_G$. If $m \in G$, then $w' \in U_G$, so assume $m = 0$.
    
    Then $w \in U_G$ implies that $[m,x]$ and $[m,y]$ are of the form \cref{g-x-0-x-equiv-class}, so $x,y \in \dom\af_0$. Since $\dom\af_0$ is a subsemigroup of $X$, then $xy \in \dom\af_0$, so $[m,xy] = [0,xy] = [e,\af_0(xy)]$, and thus $w' \in U_G$.
\end{proof}

\begin{lema}\label{UG-goes-to-0fw}
    Suppose $\af$ satisfies \cref{af0xyz-af0afgxafhyz,afg1-afg((xy)z)-group} and let $w \to w'$. If $w \in U_G \cap U$ and $w \overset{*}{\to} [0,f(w)]$, then $w' \overset{*}{\to} [0,f(w')]$.
\end{lema}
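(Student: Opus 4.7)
The plan is to deduce the claim from Newman's lemma, applied to the sub-ARS of reducts of $w$. First, I would dispose of the trivial bookkeeping: since $w\to w'$, \cref{reducts-of-UG-are-in-UG} gives $w'\in U_G$, while \cref{lema-f-preserves-to} (applied to the one-step reduction $w\to w'$, which implies $w\overset{*}{\leftrightarrow}w'$) gives $w'\in U$ and $f(w')=f(w)$. In particular, the target $[0,f(w')]$ coincides with $[0,f(w)]$, so it suffices to establish $w'\overset{*}{\to}[0,f(w)]$.

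Next, let $V\sst X_M^+$ be the set of all reducts of $w$ (including $w$ itself) and consider $(V,\to|_V)$, the restriction of the ambient rewriting system to $V$. Since $V$ is by definition closed under $\to$, this is a well-defined ARS. Termination of $(V,\to|_V)$ is inherited from \cref{to-is-terminating}. As for local confluence: by \cref{reducts-of-UG-are-in-UG} we have $V\sst U_G$, so every $v\in V$ is locally confluent in the full system by \cref{words-in-UG-are-loc-conf}; because $V$ is closed under reduction, the joining element produced by local confluence stays in $V$, so local confluence transfers to $(V,\to|_V)$.

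Applying \cref{loc-conf-terminating-implies-unique-normal-form} to $(V,\to|_V)$, normal forms in $V$ are unique up to $\overset{*}{\leftrightarrow}$ in $V$. By hypothesis, $[0,f(w)]\in V$, and it is in normal form since it is a single letter. By termination, the reduct $w'\in V$ admits some normal form $v\in V$. As both $v$ and $[0,f(w)]$ are reducts of $w$, the chain $v\overset{*}{\leftarrow}w'\leftarrow w\overset{*}{\to}[0,f(w)]$ gives $v\overset{*}{\leftrightarrow}[0,f(w)]$ in $V$, so the uniqueness of normal forms yields $v=[0,f(w)]$. Hence $w'\overset{*}{\to}v=[0,f(w)]=[0,f(w')]$.

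The only genuinely delicate point is the transfer of local confluence from the full ARS to the sub-ARS $(V,\to|_V)$; this is legitimate precisely because \cref{reducts-of-UG-are-in-UG} ensures that $V$ is closed under $\to$. Once this closure is in hand, the argument is the standard Newman's lemma reasoning packaged by \cref{loc-conf-terminating-implies-unique-normal-form}.
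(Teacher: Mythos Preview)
Your proof is correct, and takes a cleaner route than the paper's. The paper argues by induction on $|w|$: it fixes a reduction chain $w=w_1\to\dots\to w_n=[0,f(w)]$, uses local confluence of $w$ (from \cref{words-in-UG-are-loc-conf}) to join $w'$ and $w_2$ at some $w''$, and then pushes the hypothesis ``$v_i\overset{*}{\to}[0,f(v_i)]$'' along the chain $w_2\to\dots\to w''$ using the inductive assumption on shorter words. In effect the paper is hand-unrolling Newman's lemma along that particular chain. You instead package the same ingredients once and for all: restrict to the downward-closed set $V$ of reducts of $w$, observe $V\sst U_G$ via \cref{reducts-of-UG-are-in-UG} so that \cref{words-in-UG-are-loc-conf} gives local confluence throughout $V$, and then invoke \cref{loc-conf-terminating-implies-unique-normal-form} directly. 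This is shorter and makes the role of $U_G$ more transparent; the paper's version is more self-contained but essentially reproves a special case of Newman's lemma.

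One small expository quibble: in your final paragraph you credit \cref{reducts-of-UG-are-in-UG} for the closure of $V$ under $\to$, but that closure is immediate from the definition of $V$ as the set of reducts of $w$. What \cref{reducts-of-UG-are-in-UG} actually buys you is $V\sst U_G$, which is what makes \cref{words-in-UG-are-loc-conf} applicable to every element of $V$. The transfer of local confluence to $(V,\to|_V)$ then follows because any join produced in the ambient system is a reduct of an element of $V$, hence lies in $V$. This is just a wording issue; the argument itself is sound.
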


\begin{proof}
    We prove by induction on $|w|$. If $|w| = 2$, then it follows from $w \overset{*}{\to} [0,f(w)]$ that $w \to [0,f(w)]$. Hence, $w' \downarrow [0,f(w)]$ because $w$ is locally confluent by \cref{words-in-UG-are-loc-conf}. Since $[0,f(w)]$ is in normal form, we have $w' = [0,f(w)]$, where $[0,f(w)] = [0,f(w')]$ by \cref{lema-f-preserves-to}.
    
    Now let $n > 2$, and assume that $w \in U_G \cap U$ and $w \overset{*}{\to} [0,f(w)]$ imply $w' \overset{*}{\to} [0,f(w')]$ for all $w \to w'$ with $|w| < n$. Let $w \to w'$ with $w \in U_G \cap U$, $w \overset{*}{\to} [0,f(w)]$ and $|w| = n$.

    Consider a chain $w = w_1 \to \dots \to w_n = [0,f(w)]$. Since $w' \leftarrow w \to w_2$ and $w$ is locally confluent by \cref{words-in-UG-are-loc-conf}, there exists $w'' \in X_M^+$ such that $w' \overset{*}{\to} w'' \overset{*}{\leftarrow} w_2$.
    
    By \cref{lema-f-preserves-to}, since $w \in U$ and $w \to w_2$, we have $w_2 \in U$ and $f(w) = f(w_2)$, so $w_2 \overset{*}{\to} [0,f(w)] = [0,f(w_2)]$. Also, since $w \in U_G$, by \cref{reducts-of-UG-are-in-UG} we have $w_2 \in U_G$. 

    Consider a chain $w_2 = v_1 \to \dots \to v_k = w''$. For each $i = 1, \dots, k-1$, since $|v_i| < n$ and by the induction hypothesis, if $v_i \in U \cap U_G$ and $v_i \overset{*}{\to} [0,f(v_i)]$, then $v_{i+1} \overset{*}{\to} [0,f(v_{i+1})]$ and, moreover, $v_{i+1} \in U \cap U_G$ by \cref{lema-f-preserves-to,reducts-of-UG-are-in-UG}. Since $v_1 \in U \cap U_G$ and $v_1 \overset{*}{\to} [0,f(v_1)]$, inductively it follows that $w'' \overset{*}{\to} [0,f(w'')]$.
    
    %Since $w_2 \in U \cap U_G$, $|w_2| < n$ and $w_2 \to v_2$, by the induction hypothesis we have $v_2 \overset{*}{\to} [0,f(v_2)]$. Moreover, $v_2 \in U \cap U_G$ by \cref{lema-f-preserves-to,reducts-of-UG-are-in-UG}. 
    Hence,
    $$w' \overset{*}{\to} w'' \overset{*}{\to} [0,f(w'')] = [0,f(w')],$$
    where $w' \in U$ and $f(w'') = f(w')$ by \cref{lema-f-preserves-to}. Thus, the result follows by induction.
\end{proof}

\begin{lema}\label{element-related-to-one-that-reduces-to-0fw-does-so-too}
    Suppose $\af$ satisfies \cref{af0xyz-af0afgxafhyz,afg1-afg((xy)z)-group} and let $w \leftrightarrow w'$. If $w \in U$ and $w \overset{*}{\to} [0,f(w)]$, then $w' \in U$ and $w' \overset{*}{\to} [0,f(w')]$.
\end{lema}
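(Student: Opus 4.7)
The plan is to split on the direction of the one-step equivalence $\leftrightarrow$. If $w' \to w$, then \cref{lema-f-preserves-to} gives $w' \in U$ and $f(w') = f(w)$, so $w' \to w \overset{*}{\to} [0, f(w)] = [0, f(w')]$, and we are done. Thus the main case is $w \to w'$, where \cref{lema-f-preserves-to} again supplies $w' \in U$ and $f(w') = f(w)$, so it suffices to prove $w' \overset{*}{\to} [0, f(w)]$.

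I would then classify $w$ according to the types of its letters. If $w \in U_G$, apply \cref{UG-goes-to-0fw} directly. If $w$ has at least one letter of the form $[0, x]$ with $x \in \dom\af_0$, then $w$ lies in the ideal $U'$; by \cref{U'-preserved-by-to} the reduct $w'$ is again in $U'$, and \cref{w-in-U'-goes-to-0-fw} concludes. If $w \in U_0$, then $w \in U_0 \cap U$; \cref{reduct-of-U0-is-in-U0} combined with \cref{lema-f-preserves-to} gives $w' \in U_0 \cap U$, and \cref{U0-goes-to-0fw} concludes.

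The remaining (mixed) case is that $w$ contains both a letter $[0, x]$ with $x \notin \dom\af_0$ and a letter $[g, y]$ with $g \in G$; by \cref{mx-=-nx-if-x-in-imaf0}, the underlying element of each such $[g, y]$ must lie outside $\im\af_0$, otherwise $w$ would already belong to $U'$. For this case I would induct on $|w|$. Pick a shortest chain $w = w_1 \to w_2 \to \dots \to w_k = [0, f(w)]$; if $w' = w_2$ we are done. Otherwise, the two reductions $w \to w'$ and $w \to w_2$ necessarily act on pairs of adjacent letters of matching type (either two $[0, \cdot]$ or two $[g, \cdot]$ with the same $g$). Using \cref{af0xyz-af0afgxafhyz,afg1-afg((xy)z)-group} together with the ideal hypotheses on $\dom\af_m$ and $\im\af_m$, I would construct a common reduct $v$ with $w' \overset{*}{\to} v \overset{*}{\leftarrow} w_2$, treating the non-overlapping, overlapping and same-pair-different-representative cases in the spirit of \cref{U0capU-locally-confluent,words-in-UG-are-loc-conf}. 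Since $|w_2| < |w|$ and $w_2 \overset{*}{\to} [0, f(w_2)]$, iterating the induction hypothesis along the chain $w_2 \overset{*}{\to} v$---each intermediate word having length strictly less than $|w|$---gives $v \overset{*}{\to} [0, f(v)] = [0, f(w)]$, whence $w' \overset{*}{\to} [0, f(w)]$. The hard part will be constructing this common reduct $v$, since there is no prior local-confluence statement available for arbitrary $w \in U$, and the argument must extract the needed confluence from the axioms and the ideal structure in order to reconcile the different choices of representatives and positions inherent to the two reductions.
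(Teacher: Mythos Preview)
Your treatment of the easy direction $w' \to w$ and of the cases $w \in U_G$, $w \in U'$, $w \in U_0$ is correct and matches the paper. The gap is in the mixed case.

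Your inductive scheme there is modelled on \cref{UG-goes-to-0fw}, whose proof works because every $w \in U_G$ is known to be locally confluent. For a mixed $w \notin U'$ you need the common reduct $v$ of $w'$ and $w_2$, and you propose to build it case by case from \cref{afg1-afg((xy)z)-group,af0xyz-af0afgxafhyz}. But consider the sub-case where both one-step reductions act on the \emph{same} pair of letters inside a $U_0$-block $u_j$ of $w$, with $u_j \notin U$ (only the whole word $w$, not each block, is assumed to lie in $U$). Writing the pair as $[0,a][0,b] = [0,a'][0,b']$ with $a,a',b,b' \notin \dom\af_0$, by \cref{0-x-equiv-class} there are $g,h \in G$ with $a' = \af_g(a)$, $b' = \af_h(b)$; the two reducts of the pair are $[0,ab]$ and $[0,\af_g(a)\af_h(b)]$, both in normal form. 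For these to coincide one needs $k \in G$ with $\af_k(ab) = \af_g(a)\af_h(b)$, i.e.\ exactly \cref{afkxyz-=-xafgyz}, which is \emph{not} assumed here. So the common reduct $v$ cannot in general be constructed locally from the axioms you list, and the induction stalls; the remark that ``the argument must extract the needed confluence from the axioms'' is precisely where it fails.

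The paper sidesteps this obstruction by never seeking a common reduct of $w'$ and $w_2$. Instead it writes $w = u_1\cdots u_k$ with the $u_j$ alternating between $U_0 \setminus U'$ and $U_G \setminus U'$, follows the given chain $w \overset{*}{\to} [0,f(w)]$ until it first enters $U'$, and records the block $u_{j_0}$ responsible --- which is then automatically in $U$ by \cref{lema-f-preserves-to}. If the reduction $w \to w'$ touches a block $j \ne j_0$, one simply replays the $u_{j_0}$-reductions inside $w'$ to land in $U'$; if it touches $j_0$ itself, one invokes \cref{U0-goes-to-0fw} or \cref{UG-goes-to-0fw} on that single block (now known to be in $U$). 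In either case $w'$ reaches $U'$, and \cref{w-in-U'-goes-to-0-fw} finishes. The key idea your proposal is missing is this block decomposition together with the observation that the block first entering $U'$ is forced to lie in $U$, which is what lets the existing lemmas apply.
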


\begin{proof}
    If $w \leftarrow w'$, we immediately have $w' \in U$ and $w' \to w \overset{*}{\to} [0,f(w)] = [0,f(w')]$ by \cref{lema-f-preserves-to}.

    Assume $w \to w'$. If $w \in U'$, then $w' \in U'$ by \cref{U'-preserved-by-to}, so $w' \overset{*}{\to} [0,f(w')]$ follows by \cref{w-in-U'-goes-to-0-fw}. So, assume $w \not\in U'$. By \cref{lema-f-preserves-to,w-in-U'-goes-to-0-fw}, it suffices to show that there exists $w'' \in U'$ such that $w' \overset{*}{\to} w''$.

    Let $w = w_1 \to \dots \to w_n = [0,f(w)]$ be a chain of reductions. Denote $i_0=\min\{i: w_i\in U'\}$, which exists because $[0,f(w)]\in U'$.
    
    %the index of the first element of the chain that is in $U'$.

    %Write $w_1 = u_1^1 u_2^1 \dots u_k^1$, where the words $u_j^1$ are elements of $U_0 \setminus U'$ or $U_G \setminus U'$ that alternate between the two sets. Since $w_1 \not\in U'$ and $w_1 \to w_2$, then $w_2$ is of the form $u_1^2 u_2^2 \dots u_k^2$ where there exists $j \in \{1, \dots, k\}$ such that $u_j^1 \to u_j^2$ and $u_i^2 = u_i^1$ for all $i \neq j$. By induction, since $w_i \not\in U'$ for all $i < i_0$, then each $w_i$, $i \leq i_0$, is of the form $u_1^i u_2^i \dots u_k^i$, where the words $u_j^i$ are elements of $U_0 \setminus U'$ or $U_G \setminus U'$ that alternate between the two sets. Moreover, since $w_i \to w_{i+1}$, there exists $j(i) \in \{1, \dots, k\}$ such that $u_{j(i)}^i \to u_{j(i)}^{i+1}$.

    Since $w_1 \not\in U'$, we may uniquely write $w_1 = u_1^1 \dots u_k^1$, where the words $u_j^1$ alternate between $U_0 \setminus U'$ and $U_G \setminus U'$. Then $w_1 \to w_2$ implies that $w_2$ is of the form $u_1^2 \dots u_k^2$ where $\exists j \in \{1, \dots, k\}$ such that $u_j^1 \to u_j^2$ and $u_l^2 = u_l^1$ for all $l \neq j$. Inductively, since $w_i \not\in U'$ and $w_i \to w_{i+1}$ for all $i < i_0$, each $w_i$, $i \leq i_0$, is of the form $u_1^i \dots u_k^i$, where for each $i < i_0$ there exists $j(i) \in \{1, \dots, k\}$ such that $u_{j(i)}^{i} \to u_{j(i)}^{i+1}$ and $u_l^{i} = u_l^{i+1}$ for all $l \neq j(i)$.

    Let $j_0 = j(i_0-1)$ and
    $$I = \{1 \leq i \leq i_0 : j(i-1) = j_0\}.$$
    List the elements of $I$ in increasing order as $i_1 < i_2 < \dots < i_l$. Observe that $i_l = i_0$, $u_{j_0}^{i_0} \in U'$ and
    \begin{equation}\label{uj0ir-to-uj0ir+1}
        u_{j_0}^{i_r} \to u_{j_0}^{i_{r+1}}, \ \  \forall r \in \{1, \dots, l-1\}.
    \end{equation}
    
    % Then for each $i \in I$ let
    % $$w_i = u_1^1 u_2^1 \dots u_{j_0}^i \dots u_{k-1}^1 u_k^1.$$
    % Then we have $w \to w_{i_1} \to w_{i_2} \to \dots \to w_{i_l}$.

    Now, since $w \to w'$, there exists $j \in \{1, \dots, k\}$ and $v_j \in X_M^+$ such that $u_j^1 \to v_j$ and
    $$w' = u_1^1 \dots v_j \dots u_k^1.$$
    We now separate into cases.

    \textit{Case 1}. $j \neq j_0$. If $j < j_0$, for each $i \in I$ let
    $$w_i' = u_1^1 \dots v_j \dots u_{j_0}^i \dots u_k^1.$$
    Since $u_{j_0}^{i_0} \in U'$ and $i_l = i_0$, we have $w_{i_l}' \in U'$. So, by \cref{uj0ir-to-uj0ir+1} we get
    $$w' \to w_{i_1}' \to \dots \to w_{i_l}' \in U'.$$

    Analogously we treat the case $j > j_0$.

    \textit{Case 2}. $j = j_0$.

    \textit{Case 2.1}. $u_{j_0}^1 \in U_0 \setminus U'$. Since $u_{j_0}^1 \to v_{j_0}$ we have $v_{j_0} \in U_0$ by \cref{reduct-of-U0-is-in-U0}. By \cref{lema-f-preserves-to} and $v_{j_0} \leftarrow u_{j_0}^1 \overset{*}{\to} u_{j_0}^{i_0} \in U' \subseteq U$ we get $v_{j_0} \in U$. Hence, $v_{j_0} \overset{*}{\to} [0,f(v_{j_0})]$ by \cref{U0-goes-to-0fw}. Thus,
    $$w' = u_1^1 \dots v_{j_0} \dots u_k^1 \overset{*}{\to} u_1^1 \dots [0,f(v_{j_0})] \dots u_k^1 \in U'.$$

    \textit{Case 2.2}. $u_{j_0}^1 \in U_G \setminus U'$. By \cref{w-in-U'-goes-to-0-fw}, since $u_{j_0}^{i_0} \in U'$, we have $u_{j_0}^{i_0} \overset{*}{\to} [0,f(u_{j_0}^{i_0})]$. Then by \cref{lema-f-preserves-to} we have $u_{j_0}^1 \in U$ and $u_{j_0}^1 \overset{*}{\to} u_{j_0}^{i_0} \overset{*}{\to} [0,f(u_{j_0}^{i_0})] = [0,f(u_{j_0}^1)]$. Hence, by \cref{UG-goes-to-0fw} and since $u_{j_0}^1 \to v_{j_0}$ we have $v_{j_0} \overset{*}{\to} [0,f(v_{j_0})]$. Thus,
    $$w' = u_1^1 \dots v_{j_0} \dots u_k^1 \overset{*}{\to} u_1^1 \dots [0,f(v_{j_0})] \dots u_k^1 \in U'.$$
\end{proof}

For each $W \subseteq X_M^+$, let $\ol{W} = \{w \in X_M^+ : \exists w' \in W \text{ such that } w \overset{*}{\leftrightarrow} w'\}$. As a consequence of \cref{element-related-to-one-that-reduces-to-0fw-does-so-too,w-in-U'-goes-to-0-fw} we have the following.

\begin{corolario}\label{w-in-olU'-goes-to-0fw}
    Suppose $\af$ satisfies \cref{af0xyz-af0afgxafhyz,afg1-afg((xy)z)-group}. If $w \in \ol{U'}$, then $w \in U$ and $w \overset{*}{\to} [0,f(w)]$.
\end{corolario}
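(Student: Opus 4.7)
The plan is to combine the two cited results by a straightforward induction on the length of a chain witnessing $w \in \ol{U'}$. By definition, if $w \in \ol{U'}$ there exists some $w^* \in U'$ together with a finite chain
\[ w = w_0 \leftrightarrow w_1 \leftrightarrow \cdots \leftrightarrow w_n = w^* \]
in $X_M^+$. Since $U' \subseteq U$, \cref{w-in-U'-goes-to-0-fw} applied to $w^*$ gives $w^* \in U$ and $w^* \overset{*}{\to} [0,f(w^*)]$. So the endpoint $w_n$ already has the desired two properties, and it remains to propagate them backwards along the chain to the starting point $w_0 = w$.

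I would do this by induction on $n$. The base case $n=0$ is immediate since then $w = w^*$. For the inductive step, suppose the claim holds for chains of length $n-1$; applying the hypothesis to the subchain $w_1 \leftrightarrow \cdots \leftrightarrow w_n = w^*$ (which still terminates in $U'$) gives $w_1 \in U$ and $w_1 \overset{*}{\to} [0,f(w_1)]$. Since $w_0 \leftrightarrow w_1$, \cref{element-related-to-one-that-reduces-to-0fw-does-so-too} then yields $w_0 \in U$ and $w_0 \overset{*}{\to} [0,f(w_0)]$, which is exactly the statement of the corollary for $w = w_0$.

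Note that no separate argument is needed for the direction of the arrows $\leftrightarrow$ in the chain: \cref{element-related-to-one-that-reduces-to-0fw-does-so-too} is stated symmetrically for single-step relations $\leftrightarrow$, so the same inductive step handles both $w_{i} \to w_{i+1}$ and $w_{i} \leftarrow w_{i+1}$. There is no real obstacle here; the technical work has already been absorbed into \cref{w-in-U'-goes-to-0-fw} and, especially, \cref{element-related-to-one-that-reduces-to-0fw-does-so-too}, whose purpose is precisely to show that the property ``$w \in U$ and $w \overset{*}{\to} [0,f(w)]$'' is preserved by single applications of $\leftrightarrow$. The corollary is the natural consequence: this property is then preserved by the transitive closure $\overset{*}{\leftrightarrow}$, and holds on every element equivalent to some word of $U'$.
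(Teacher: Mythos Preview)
Your proof is correct and is exactly the argument the paper has in mind: the paper's one-line proof merely cites \cref{w-in-U'-goes-to-0-fw} and \cref{element-related-to-one-that-reduces-to-0fw-does-so-too}, and your induction on the length of a $\leftrightarrow$-chain is the natural way to unpack that citation. The only cosmetic remark is that the direction in which you propagate the property is immaterial since $\leftrightarrow$ is symmetric, which you already note.
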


% \begin{proof}
%     Since $w \in \ol{U'}$, there exists $w' \in U'$ such that $w \overset{*}{\leftrightarrow} w'$. Now, since $w' \in U'$ we have $w' \overset{*}{\to} [0,f(w')]$. The result then follows from \cref{element-related-to-one-that-reduces-to-0fw-does-so-too} by transitivity.
% \end{proof}

\begin{lema}\label{olU'-is-locally-confluent}
    Suppose $\af$ satisfies \cref{af0xyz-af0afgxafhyz,afg1-afg((xy)z)-group}. If $w \in \ol{U'}$, then $w$ is locally confluent.
\end{lema}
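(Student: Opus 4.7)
The plan is to use Corollary \ref{w-in-olU'-goes-to-0fw} as the key ingredient, which already tells us that every element of $\ol{U'}$ reduces to a canonical element of the form $[0,f(w)]$. Local confluence will then follow almost immediately.

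More concretely, I would proceed as follows. Let $w \in \ol{U'}$ and suppose $w_1 \leftarrow w \to w_2$. First I would observe that $w_1, w_2 \in \ol{U'}$: indeed, $w \overset{*}{\leftrightarrow} w_1$ and $w \overset{*}{\leftrightarrow} w_2$, so by transitivity of $\overset{*}{\leftrightarrow}$ both $w_1$ and $w_2$ are related to the same element of $U'$ that witnesses $w \in \ol{U'}$. Then by \cref{w-in-olU'-goes-to-0fw} we have $w_1, w_2 \in U$ with $w_1 \overset{*}{\to} [0,f(w_1)]$ and $w_2 \overset{*}{\to} [0,f(w_2)]$.

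Next, since $w_1 \overset{*}{\leftrightarrow} w_2$ and both lie in $U$, \cref{lema-f-preserves-to} gives $f(w_1) = f(w_2)$. Hence $[0,f(w_1)] = [0,f(w_2)]$, and combining with the reductions above yields $w_1 \downarrow w_2$ with common reduct $[0,f(w_1)]$. This shows that $w$ is locally confluent.

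There is no real obstacle here: the whole technical content of the argument has been packaged into \cref{w-in-olU'-goes-to-0fw} (whose proof is where the conditions \cref{af0xyz-af0afgxafhyz,afg1-afg((xy)z)-group} do the work) and the invariance of $f$ under $\overset{*}{\leftrightarrow}$ from \cref{lema-f-preserves-to}. The only mild point to note is that $\ol{U'}$ is automatically closed under $\overset{*}{\leftrightarrow}$ by its very definition, which is what allows us to transport the reducts $w_1, w_2$ of $w$ back into $\ol{U'}$ before applying \cref{w-in-olU'-goes-to-0fw}.
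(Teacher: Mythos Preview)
Your proposal is correct and matches the paper's proof essentially line for line: the paper also observes that $w_1,w_2\in\ol{U'}$, applies \cref{w-in-olU'-goes-to-0fw} to get $w_i\overset{*}{\to}[0,f(w_i)]$, and uses \cref{lema-f-preserves-to} to conclude $f(w_1)=f(w_2)$. Your write-up is just slightly more explicit about why $\ol{U'}$ is closed under $\overset{*}{\leftrightarrow}$.
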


\begin{proof}
    Let $w_1 \leftarrow w \to w_2$. Then $w_1,w_2 \in \ol{U'}$, so by \cref{w-in-olU'-goes-to-0fw,lema-f-preserves-to} we have
    $$w_1 \overset{*}{\to} [0,f(w_1)] = [0,f(w_2)] \overset{*}{\leftarrow} w_2,$$
    so $w_1 \downarrow w_2$. Thus, $w$ is locally confluent, as desired.
\end{proof}

\begin{lema}\label{two-conditions-imply-ex-unique-normal-form}
    Suppose $\af$ satisfies \cref{af0xyz-af0afgxafhyz,afg1-afg((xy)z)-group}. Then for all $m \in M$ and $x,y \in X$, if $[m,x] \overset{*}{\leftrightarrow} [e,y]$, then $[m,x] = [e,y]$. %Then $[e,x]$ has at most one word of length $1$ in its $\overset{*}{\leftrightarrow}$-equivalence class. %If $[e,x] \overset{*}{\leftrightarrow} [m,y]$, then $[e,x] = [m,y]$
\end{lema}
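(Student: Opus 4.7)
The plan is to case-split on whether $[m,x] \in \overline{U'}$, exploiting that $\overline{U'}$ is a union of $\overset{*}{\leftrightarrow}$-classes by its very definition. In the case $[m,x] \in \overline{U'}$, also $[e,y] \in \overline{U'}$, and by \cref{w-in-olU'-goes-to-0fw} we get $[m,x] \overset{*}{\to} [0,f([m,x])]$ and $[e,y] \overset{*}{\to} [0,f([e,y])]$; since both letters are already in normal form, these reductions collapse to $[m,x]=[0,f([m,x])]$ and $[e,y]=[0,f([e,y])]$. Applying \cref{lema-f-preserves-to} to the hypothesis $[m,x] \overset{*}{\leftrightarrow} [e,y]$ yields $f([m,x])=f([e,y])$, and the two equalities combine to give $[m,x]=[e,y]$.

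The harder case is $[m,x] \notin \overline{U'}$, so also $[e,y] \notin \overline{U'}$. A single-letter reading of \cref{w-in-olU'-goes-to-0fw} shows that a letter lies in $\overline{U'}$ exactly when it can be written as $[0,w]$ with $w \in \dom\af_0$. Consequently $y \notin \im\af_0$, so $[e,y]$ admits only $G$-labeled representatives, and $[m,x]$ falls into one of two subcases: (2a) $m=0$ with $x \notin \dom\af_0$ (in which case $[m,x]$ has only $0$-labeled representatives, by \cref{[0x]-x-not-in-domaf0}), or (2b) $m \in G$ with $x \notin \im\af_0$ (in which case $[m,x]$ has only $G$-labeled representatives, by \cref{[gx]-x-not-in-imaf0}).

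The pivotal claim is that the entire $\overset{*}{\leftrightarrow}$-class of $[m,x]$ lies in $U_G$ in subcase (2b), and in $U_0$ in subcase (2a). Closure under $\to$ is given by \cref{reducts-of-UG-are-in-UG,reduct-of-U0-is-in-U0}. For closure under $\leftarrow$ inside the class I would argue as follows: if $w$ lies in the appropriate set and in the class (hence outside $\overline{U'}$), and $u[p,a][p,b]v \to u[p,ab]v = w$, then in subcase (2b) having $p=0$ together with $[0,ab] \in U_G$ would force $ab \in \dom\af_0$ via \cref{[0x]-x-not-in-domaf0}, placing $w$ in $U' \subseteq \overline{U'}$, a contradiction; hence $p \in G$ and the preimage is in $U_G$. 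Subcase (2a) is symmetric: $p \in G$ together with $[p,ab] \in U_0$ would force $ab \in \im\af_0 \subseteq \dom\af_0$ via \cref{[gx]-x-not-in-imaf0}, again giving $w \in U'$; so $p=0$ and the preimage is in $U_0$. A straightforward induction along any chain then proves the claim.

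From this containment, subcase (2a) collapses because $[e,y]$ has only $G$-labeled representatives and therefore does not lie in $U_0$, contradicting the inclusion of the class in $U_0$. In subcase (2b), the rewriting system restricted to $U_G$ is terminating by \cref{to-is-terminating} and locally confluent by \cref{words-in-UG-are-loc-conf}, so \cref{loc-conf-terminating-implies-unique-normal-form} applied to this restriction forces $[m,x]=[e,y]$, since both are in normal form. I expect the main obstacle to be the closure-under-$\leftarrow$ argument: in the full system neither $U_G$ nor $U_0$ is closed under $\leftarrow$, and what makes the proof go through is that the pathological $\leftarrow$-preimages all lie in $\overline{U'}$, which the chosen class carefully avoids.
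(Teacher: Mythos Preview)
Your proof is correct and follows essentially the same strategy as the paper: split on membership in $\overline{U'}$, and in the complementary case show that the entire $\overset{*}{\leftrightarrow}$-class is contained in $U_G$, then invoke local confluence there together with termination.

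Two minor remarks on presentation. First, your Case~1 argument is actually slightly more direct than the paper's: the paper appeals to local confluence of $\overline{U'}$ via \cref{olU'-is-locally-confluent}, whereas you read off the conclusion immediately from \cref{w-in-olU'-goes-to-0fw} and the fact that single letters are in normal form. Second, by casing on $[m,x]$ rather than on $[e,y]$ you create the extra subcase~(2a), which you then eliminate by contradiction. The paper avoids this detour by observing that $[e,y]\in U_G$ from the start, so only the $U_G$-containment argument (your subcase~(2b)) is needed; the closure-under-$\leftarrow$ step is the same in both proofs.
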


\begin{proof}
    %Let $w \in X_M^+$ have length $1$ and be such that $w \overset{*}{\leftrightarrow} [e,x]$. In particular, $w$ is in normal form.

    % If $x \in \im\af_0$, then $[e,x] \in U'$, so $w \in \ol{U'}$. By \cref{olU'-is-locally-confluent}, the rewriting system $(\ol{U'},\to)$ is locally confluent. Thus, since it is also terminating, it follows from \cite[Fact 2.1.7, Lemma 2.7.2]{Term-rewriting-and-all-that} that every element in $(\ol{U'},\to)$ has at most one normal form. Hence, we get $[e,x] = w$.

    % If $x \not\in\im\af_0$, then $[e,x] \in U_G \setminus U$, so $w \in \ol{U_G \setminus U} = U_G \setminus U$ by \cref{UG-setminus-U-=-olUG-setminus-U}. By \cref{words-in-UG-are-loc-conf,UG-setminus-U-=-olUG-setminus-U} the rewriting system $(U_G \setminus U,\to)$ is locally confluent, so we similarly have $[e,x] = w$.

    % If $y \in \im\af_0$, then $[e,y] = [0,y] \in U'$, so $[m,x] \in \ol{U'}$. By \cref{olU'-is-locally-confluent}, the rewriting system $(\ol{U'},\to)$ is locally confluent. Thus, since it is also terminating, it follows from \cref{loc-conf-terminating-implies-unique-normal-form} that $[m,x] = [e,y]$.

    % If $y \not\in\im\af_0$, then $[e,y] \in U_G \setminus V$, so $[m,x] \in \ol{U_G \setminus V}$. By \cref{words-in-UG-are-loc-conf,UG-setminus-U-=-olUG-setminus-U} the rewriting system $(\ol{U_G \setminus V},\to)$ is locally confluent, so we also have $[m,x] = [e,y]$ by \cref{loc-conf-terminating-implies-unique-normal-form}.

    Let $m \in M$ and $x,y \in X$ with $[m,x] \overset{*}{\leftrightarrow} [e,y]$. Restricting $\to$ to $\ol{\{[e,y]\}}\times \ol{\{[e,y]\}}$, we obtain a rewriting system $(\ol{\{[e,y]\}},\to)$, which is terminating, because $(X_M^+,\to)$ is (by \cref{to-is-terminating}). Observe that $[m,x]$ and $[e,y]$ are in normal form. Thus, in order to prove that $[m,x] = [e,y]$, we can apply \cref{loc-conf-terminating-implies-unique-normal-form}, for which it suffices to show that $(\ol{\{[e,y]\}},\to)$ is locally confluent.
    %Let $m \in M$ and $x,y \in X$ with $[m,x] \overset{*}{\leftrightarrow} [e,y]$. The rewriting system $(\ol{\{[e,y]\}},\to)$ is terminating and $[m,x],[e,y] \in \ol{\{[e,y]\}}$ are in normal form. Hence, to show that $[m,x] = [e,y]$ it suffices to show that $(\ol{\{[e,y]\}},\to)$ is locally confluent, in order to apply \cref{loc-conf-terminating-implies-unique-normal-form}.

    Since $\ol{\{[e,y]\}}$ is closed by $\overset{*}{\leftrightarrow}$, a word in $(\ol{\{[e,y]\}},\to)$ is locally confluent if and only if it is locally confluent in $(X_M^+,\to)$.

    \textit{Case 1}. $[e,y] \in \ol{U'}$. Then $\ol{\{[e,y]\}} \subseteq \ol{U'}$, so $(\ol{\{[e,y]\}},\to)$ is locally confluent by \cref{olU'-is-locally-confluent}.
    
    \textit{Case 2}. $[e,y] \not\in \ol{U'}$. It suffices to show that $\ol{\{[e,y]\}} \subseteq U_G$, so $(\ol{\{[e,y]\}},\to)$ is locally confluent by \cref{words-in-UG-are-loc-conf}. Assume there is $w \in \ol{\{[e,y]\}} \setminus U_G$ and consider a chain
    $$[e,y] = w_1 \leftrightarrow \dots \leftrightarrow w_n = w.$$
    Observe that $w_i \not \in U'$ for all $i \in \{1,\dots,n\}$, otherwise $[e,y] \in \ol{U'}$. Let $i_0 = \min\{1 \leq i \leq n : w_i \not\in U_G\}$, which exists and is greater than $1$ because $w \not\in U_G$ and $[e,y] \in U_G$.

    By \cref{reducts-of-UG-are-in-UG}, since $w_{i_0-1} \leftrightarrow w_{i_0}$, $w_{i_0 -1} \in U_G$ and $w_{i_0} \not\in U_G$ we have $w_{i_0-1} \leftarrow w_{i_0}$. Then there exist $u,v\in X_M^*,\ m\in M$ and $a,b\in X$ such that $w_{i_0-1} = u[m,ab]v$ and $w_{i_0} = u[m,a][m,b]v$. Since $w_{i_0-1} \in U_G$ we have $u, v \in U_G$. Moreover, since $w_{i_0 -1} \in U_G \setminus U'$, $[m,ab]$ is of the form \cref{g-x-equiv-class}, so $m \in G$. Hence, $w_{i_0} \in U_G$, a contradiction.
    %Since $w_{i_0-1} \in U_G \setminus U'$, its letters are of the form \cref{g-x-equiv-class}, so from $w_{i_0-1} \leftrightarrow w_{i_0}$, $w_{i_0} \not\in U_G$ and the fact that $\im\af_0$ is an ideal of $X$ we have $w_{i_0-1} \to w_{i_0}$. Then there exist $u,v\in X_M^*,\ m\in M$ and $a,b\in X$ such that $w_{i_0-1} = u[m,a][m,b]v$ and $w_{i_0} = u[m,ab]v$. Since $w_{i_0-1} \in U_G \setminus U'$, it follows that $u,v \in U_G$ and $m \in G$, so $w_{i_0} \in U_G$, a contradiction.
\end{proof}

\begin{proposicao}\label{satisfies-proprerties-implies-globalizable}
    Suppose that $\dom\alpha_m$ and $\im\alpha_m$ are ideals of $X$ for all $m \in M$. If $\af$ satisfies \cref{af0xyz-af0afgxafhyz,afg1-afg((xy)z)-group}, then the reflection $(\ol\beta,\ol\iota)$ from \cref{reflexao-em-semigrupos-eh-quociente-da-reflexao-em-set} is a universal globalization of $\af$.
\end{proposicao}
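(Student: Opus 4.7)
The plan is to combine the preceding Lemma \ref{two-conditions-imply-ex-unique-normal-form} with Proposition \ref{unique-normal-form-implies-glob} to derive the result essentially immediately, since all the technical work has been carried out in the chain of lemmas culminating in \cref{two-conditions-imply-ex-unique-normal-form}.

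More precisely, Proposition \ref{unique-normal-form-implies-glob} asserts that the reflection $(\beta,\iota)$ from \cref{reflexao-em-semigrupos-eh-quociente-da-reflexao-em-set} is a universal globalization of $\af$ in $\act{\Sem}$ if and only if, for every $m \in M$ and every $x,y \in X$ with $[m,x] \overset{*}{\leftrightarrow} [e,y]$, one has $[m,x] = [e,y]$. But this implication is exactly the content of Lemma \ref{two-conditions-imply-ex-unique-normal-form}, whose hypotheses coincide with those of the present proposition: $\af$ satisfies both \cref{afg1-afg((xy)z)-group} and \cref{af0xyz-af0afgxafhyz}, and the ideal assumption on $\dom\af_m$ and $\im\af_m$ has been used throughout the supporting lemmas (most notably in \cref{w-in-U'-goes-to-0-fw,U'-preserved-by-to,reduct-of-U0-is-in-U0,reducts-of-UG-are-in-UG}). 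So the proof of the proposition is just the concatenation of these two results.

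The only real substance therefore lies in \cref{two-conditions-imply-ex-unique-normal-form}, not in the proposition itself. In that lemma, the main obstacle is to establish local confluence of the restricted rewriting system on $\ol{\{[e,y]\}}$; this is handled by the dichotomy $[e,y]\in\ol{U'}$ versus $[e,y]\notin\ol{U'}$, using \cref{olU'-is-locally-confluent} in the first case and the stability result \cref{reducts-of-UG-are-in-UG} together with \cref{words-in-UG-are-loc-conf} to show $\ol{\{[e,y]\}}\subseteq U_G$ in the second case. Since this work is already complete, the proof of the proposition reduces to the single sentence: by \cref{two-conditions-imply-ex-unique-normal-form} the equivalent condition in \cref{unique-normal-form-implies-glob} is verified, hence $(\beta,\iota)$ is a universal globalization of $\af$.
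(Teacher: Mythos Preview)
Your proposal is correct and matches the paper's own proof exactly: the paper simply writes that the result follows from \cref{unique-normal-form-implies-glob,two-conditions-imply-ex-unique-normal-form}. Your additional commentary on why \cref{two-conditions-imply-ex-unique-normal-form} works is accurate but not needed for this proposition, since that lemma has already been proved.
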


\begin{proof}
    It follows from \cref{unique-normal-form-implies-glob,two-conditions-imply-ex-unique-normal-form}.
\end{proof}

\begin{teorema}\label{af-globalizable-iff}
    Let $M$ be the monoid $G^0$, where $G$ is a group, and $\af$ a strong partial action of $M$ on a semigroup $X$ such that $\dom\alpha_m$ and $\im\alpha_m$ are ideals of $X$ for all $m \in M$. Then $\af$ is globalizable if and only if it satisfies \cref{af0xyz-af0afgxafhyz,afg1-afg((xy)z)-group}.
\end{teorema}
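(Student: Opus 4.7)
The plan is to assemble this result directly from the two preceding propositions. The forward implication and the converse have already been isolated as separate statements whose proofs constitute the technical heart of the section, so the theorem itself is essentially a packaging step.

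For the ``only if'' direction, I would invoke Proposition \ref{af-globalizable-implies-conditions} verbatim: whenever $\af$ admits a globalization $(\beta,\iota)$, identifying $X$ with $\iota(X)\subseteq Y$ lets us compute $\af_{g^{-1}}(\af_g(xy)z)$ and $\af_0(xyz)$ using $\bt$, which gives both \cref{afg1-afg((xy)z)-group} and \cref{af0xyz-af0afgxafhyz} with no further work. Note that the ideal hypothesis on $\dom\af_m$ is actually needed only to ensure that the expressions appearing in these conditions make sense; this is where the restriction to ideals enters on the necessity side.

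For the ``if'' direction, I would apply Proposition \ref{satisfies-proprerties-implies-globalizable}, which asserts that under \cref{af0xyz-af0afgxafhyz,afg1-afg((xy)z)-group} (together with the hypothesis that all $\dom\af_m$ and $\im\af_m$ are ideals) the reflection $(\beta,\iota)$ built in \cref{reflexao-em-semigrupos-eh-quociente-da-reflexao-em-set} is already a universal globalization of $\af$. By \cref{globalizable-iff-reflection-is-globalization} this is exactly what is needed to conclude that $\af$ is globalizable.

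There is no separate obstacle to surmount in the theorem itself, because the delicate part has already been absorbed into \cref{two-conditions-imply-ex-unique-normal-form}, whose proof reduces the question to the local confluence of the restricted rewriting system on $\ol{\{[e,y]\}}$ and then splits into the cases $[e,y]\in\ol{U'}$ and $[e,y]\notin\ol{U'}$, handled respectively by \cref{olU'-is-locally-confluent} and \cref{words-in-UG-are-loc-conf}. Consequently the proof I would write consists of a single sentence citing \cref{af-globalizable-implies-conditions,satisfies-proprerties-implies-globalizable}.
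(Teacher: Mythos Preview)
Your proposal is correct and matches the paper's own proof exactly: the paper's argument is the single sentence ``The `if' part is \cref{satisfies-proprerties-implies-globalizable}. The `only if' part is \cref{af-globalizable-implies-conditions}.'' Your additional remarks about where the real work lies (in \cref{two-conditions-imply-ex-unique-normal-form} and the surrounding lemmas) are accurate commentary but are not part of the proof itself.
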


\begin{proof}
    The ``if'' part is \cref{satisfies-proprerties-implies-globalizable}. The ``only if'' part is \cref{af-globalizable-implies-conditions}.
\end{proof}

\subsection{Necessary and sufficient conditions for local confluence}

Let $M$ be the monoid $G^0$, where $G$ is a group, $X$ a semigroup and $\af$ a strong partial action of $M$ on $X$. %Assume that $\dom\af_g$ is an ideal of $X$ for all $g \in G$. Consider the following conditions on $\af$:
Consider the following condition:
\begin{enumerate}[label=(LC\arabic*)]
\setcounter{enumi}{2}
    \item\label{afkxyz-=-xafgyz} for all $g,h \in G$, $x \in \dom\af_g$, $y \in \dom \af_h$ and $z \in X^1$:
        \begin{align*}
            xyz \not\in \dom\af_0 &\implies \exists k \in G \text{ s.t. } xyz \in \dom\af_k \text{ and } \af_k(xyz) = \af_g(x) \af_h(y)z.
        \end{align*}
\end{enumerate}

% \begin{lema}\label{lema-afkxyz-=-xafgyz-expandido}
%     Suppose $\af$ satisfies \cref{afkxyz-=-xafgyz}. If $g_1, g_2, \dots, g_k \in G$, $x_i \in \dom\af_{g_i}$ for each $i = 1, \dots, k$ and $x_1 x_2 \dots x_k \not\in \dom\af_0$, then there exists $h \in G$ such that $x_1 x_2 \dots x_k \in \dom\af_k$ and
%     $$\af_h(x_1x_2\dots x_k) = \af_{g_1}(x_1) \af_{g_2}(x_2) \dots \af_{g_k}(x_k).$$
% \end{lema}

% \begin{proof}
%     We prove by induction in $k$. If $k = 1, 2$ or $3$, we immediately have the result by \cref{afkxyz-=-xafgyz}.

%     Now assume the result it true for $k-1$ and let $g_1, g_2, \dots, g_k \in G$, $x_i \in \dom\af_{g_i}$ for each $i = 1, \dots, k$ and $x_1 x_2 \dots x_k \not\in \dom\af_0$. Since $\dom\af_0$ is an ideal of $X$ we have $z \coloneqq x_1 x_2 \dots x_{k-1} \not\in\dom\af_0$. Then, by induction hypothesis, there exists $h \in G$ such that $z \in \dom\af_h$ and 
%     $$\af_h(z) = \af_{g_1}(x_1) \af_{g_2}(x_2) \dots \af_{g_k}(x_{k-1}).$$
%     Since $z x_k \not\in\dom\af_0$, by \cref{afkxy-=-afgxafhy} there exists $h' \in G$ such that $z x_k \in \dom\af_{h'}$ and
%     $$\af_{h'}(z x_k) = z \af_{g_k}(x_k)$$
% \end{proof}

\begin{lema}\label{U0-setminus-U-loc-conf}
    % Suppose $\af$ satisfies \cref{afkxyz-=-xafgyz}. Then every element of $U_0 \setminus U$ is locally confluent.

    Suppose that $\dom\alpha_m$ and $\im\alpha_m$ are ideals of $X$ for all $m \in M$. Then every word of $U_0 \setminus U$ is locally confluent if and only if $\af$ satisfies \cref{afkxyz-=-xafgyz}.
\end{lema}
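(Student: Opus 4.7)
The plan is to treat the two directions separately. The main tool for identifying two classes of the form $[0,\cdot]$ whose first coordinate lies outside $\dom\af_0$ will be \cref{[0x]-x-not-in-domaf0}. The key preliminary observation is that, since $\dom\af_0$ is an ideal, a word $w=[0,x_1]\dots[0,x_k]\in U_0\setminus U$ must have every $x_i\notin\dom\af_0$ (otherwise the full product would lie in $\dom\af_0$), so by \cref{equiv-classes-of-Gsqcup0} each letter is a class of the form \cref{0-x-equiv-class}; the same ideal argument also shows that $U_0\setminus U$ is closed under $\to$.

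For the ``if'' direction, assume $\af$ satisfies \cref{afkxyz-=-xafgyz} and fix $w=[0,x_1]\dots[0,x_k]\in U_0\setminus U$. I will prove that the class $[0,x_1\dots x_k]$ is preserved by every one-step reduction of $w$. A one-step reduction at positions $i,i+1$ must use representatives $(0,\af_g(x_i))$ and $(0,\af_h(x_{i+1}))$ for some $g,h\in G$ with $x_i\in\dom\af_g$ and $x_{i+1}\in\dom\af_h$, and produces a word whose trivial collapse is $[0,x_1\dots x_{i-1}\af_g(x_i)\af_h(x_{i+1})x_{i+2}\dots x_k]$. Applying \cref{afkxyz-=-xafgyz} to $(x_i,x_{i+1},x_{i+2}\dots x_k)$ (the last factor taken to be $1$ if $i=k-1$) yields some $k_i\in G$ with $x_i\dots x_k\in\dom\af_{k_i}$ and $\af_{k_i}(x_i\dots x_k)=\af_g(x_i)\af_h(x_{i+1})x_{i+2}\dots x_k$. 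A descending induction on $j=i-1,\dots,1$, each step invoking \cref{afkxyz-=-xafgyz} with $x=x_j\in\dom\af_e$, $y=x_{j+1}\dots x_k\in\dom\af_{k_{j+1}}$, $z=1$ (and using the ideal hypothesis to certify $x_j\dots x_k\notin\dom\af_0$ and $x_1\dots x_k\in\dom\af_{k_j}$), produces $k_1\in G$ with $x_1\dots x_k\in\dom\af_{k_1}$ and $\af_{k_1}(x_1\dots x_k)=x_1\dots x_{i-1}\af_g(x_i)\af_h(x_{i+1})x_{i+2}\dots x_k$. By \cref{[0x]-x-not-in-domaf0} the two classes coincide. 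Consequently the chain of purely trivial reductions exhibits $[0,x_1\dots x_k]$ as a single-letter normal form reachable from every reduct of $w$, so $w$ has a unique normal form and is locally confluent.

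For the ``only if'' direction, assume every word of $U_0\setminus U$ is locally confluent and fix $g,h\in G$, $x\in\dom\af_g$, $y\in\dom\af_h$ and $z\in X^1$ with $xyz\notin\dom\af_0$. First I handle $z=1$: the word $w=[0,x][0,y]$ lies in $U_0\setminus U$ (since $xy\notin\dom\af_0$ forces $x,y\notin\dom\af_0$), and its one-step reducts $[0,xy]$ and $[0,\af_g(x)\af_h(y)]$---the latter obtained by first rewriting $(0,x)$ and $(0,y)$ as the equivalent $(0,\af_g(x))$ and $(0,\af_h(y))$ before applying the reduction rule---are single-letter normal forms, so local confluence forces them to coincide. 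By \cref{[0x]-x-not-in-domaf0} this produces $k\in G$ with $xy\in\dom\af_k$ and $\af_k(xy)=\af_g(x)\af_h(y)$, which is \cref{afkxyz-=-xafgyz} in this case. For a general $z\in X$, I apply the $z=1$ conclusion just established twice: first to $(g,h,x,y)$ to obtain $k_0\in G$ with $\af_{k_0}(xy)=\af_g(x)\af_h(y)$, and then to $(k_0,e,xy,z)$ (noting $xyz\notin\dom\af_0$) to obtain $k\in G$ with $xyz\in\dom\af_k$ and $\af_k(xyz)=\af_{k_0}(xy)\cdot z=\af_g(x)\af_h(y)z$.

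The hard step will be the descending induction in the ``if'' direction: at each stage one must certify that the suffix $x_j\dots x_k$ still lies outside $\dom\af_0$ and is contained in the domain of the partial homomorphism $\af_{k_{j+1}}$ produced at the previous stage, both of which follow from the ideal hypothesis on the $\dom\af_m$ but demand careful bookkeeping. The ``only if'' direction, by contrast, is essentially a direct readout of local confluence for two-letter words in $U_0\setminus U$, with the general case of $z$ obtained by a two-step iteration.
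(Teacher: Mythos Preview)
Your proof is correct, but both directions take a somewhat different route from the paper.

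For the ``if'' direction, the paper observes that every subword of a word in $U_0\setminus U$ again lies in $U_0\setminus U$ (because $\dom\af_0$ is an ideal), and therefore, just as in \cref{locally-confluent-iff-2-3-letter-words-are-locally-confluent}, it suffices to check local confluence for words of length $2$ and $3$; these two cases are then handled by a single application of \cref{afkxyz-=-xafgyz}. Your invariant-based argument---showing that the collapsed class $[0,x_1\dots x_k]$ is unchanged by every one-step reduction---is more laborious (the descending induction must be iterated along arbitrary reduction chains), but it has the advantage of exhibiting an explicit common normal form. One small point: your preliminary claim that $U_0\setminus U$ is closed under $\to$ by ``the same ideal argument'' is slightly too quick; ruling out that the \emph{new} letter $[0,\af_g(x_i)\af_h(x_{i+1})]$ (or a further reduct) admits a representative landing the word in $U$ ultimately uses \cref{afkxyz-=-xafgyz} itself, not just the ideal hypothesis. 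This does not affect your argument, since you are already assuming \cref{afkxyz-=-xafgyz} in this direction.

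For the ``only if'' direction, the paper works directly with the $3$-letter word $[0,x][0,y][0,z]$ and reads off \cref{afkxyz-=-xafgyz} from the equality of its two single-letter reducts. Your two-step reduction---first $z=1$ via a $2$-letter word, then general $z$ by iterating the $z=1$ conclusion---is a legitimate alternative and has the minor virtue of making the $z=1$ case explicit.
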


\begin{proof}
     \textit{The ``if'' part}. Assume $\af$ satisfies \cref{afkxyz-=-xafgyz}. Since $\dom\af_0$ is an ideal of $X$, every subword of a word in $U_0 \setminus U$ also belongs to $U_0 \setminus U$. Hence, as in \cref{locally-confluent-iff-2-3-letter-words-are-locally-confluent}, it suffices to show that every $w \in U_0 \setminus U$ with $|w| \in \{2,3\}$ is locally confluent. So let $w_1 \leftarrow w \to w_2$ with $w \in U_0 \setminus U$.

    % Let $w = [0,x][0,y] = [0,x'][0,y'] \in U_0 \setminus U$, so $w \to [0,xy], [0,x'y']$. Since $w \not\in U$, $xy \not\in\dom\af_0$, and since $\dom\af_0$ is an ideal of $X$ it follows that $x,y \not\in\dom\af_0$. Then, by \cref{equiv-classes-of-Gsqcup0}, there exist $g, h \in G$ such that $x \in \dom\af_g$, $y \in \dom\af_h$, $\af_g(x) = x'$ and $\af_h(y) = y'$. Since $w \not\in U$, $xy \not\in\dom\af_0$, so by \cref{afkxyz-=-xafgyz} there exists $k \in G$ such that $xy \in \dom\af_k$ and $\af_k(xy) = \af_g(x) \af_h(y)$. Hence
    \textit{Case 1}. $|w| = 2$. Then there exist $x,y \in X$ such that $w = [0,x][0,y] = [0,x'][0,y']$, $w_1 = [0,xy]$ and $w_2 = [0,x'y']$. From $w \not\in U$ we get $xy \not\in\dom\af_0$, so since $\dom\af_0$ is an ideal of $X$ we have $x,y \not\in\dom\af_0$. Then $[0,x]$ and $[0,y]$ are of the form \cref{0-x-equiv-class}, so due to $[0,x] = [0,x']$ and $[0,y] = [0,y']$ there exist $g, h \in G$ such that $x \in \dom\af_g$, $y \in \dom\af_h$, $\af_g(x) = x'$ and $\af_h(y) = y'$. Since $xy \not\in\dom\af_0$, by \cref{afkxyz-=-xafgyz} there exists $k \in G$ such that $xy \in \dom\af_k$ and $\af_k(xy) = \af_g(x) \af_h(y)$. Hence $w_1 \downarrow w_2$ because
    $$w_1 = [0,xy] = [0,\af_k(xy)] = [0,\af_g(x) \af_h(y)] = [0,x'y'] = w_2.$$
    Thus, $w$ is locally confluent.
    
    % Let $w = [0,x][0,y] = [0,x'][0,y'] \in U_0 \setminus U$, so $[0,xy] \leftarrow w \to [0,x'y']$. Since $w \not\in U$, $xy \not\in\dom\af_0$, and since $\dom\af_0$ is an ideal of $X$ it follows that $x,y \not\in\dom\af_0$. Then, by \cref{equiv-classes-of-Gsqcup0}, there exist $g, h \in G$ such that $x \in \dom\af_g$, $y \in \dom\af_h$, $\af_g(x) = x'$ and $\af_h(y) = y'$. From $xy \not\in\dom\af_0$ and \cref{afkxyz-=-xafgyz} there exists $k \in G$ such that $xy \in \dom\af_k$ and $\af_k(xy) = \af_g(x) \af_h(y)$. Hence
    % $$[0,xy] = [0,\af_k(xy)] = [0,\af_g(x) \af_h(y)] = [0,x'y']$$
    % and, so, $w$ is locally confluent.

    \textit{Case 2}. $|w| = 3$. Then, without loss of generality, we can assume that there exist $x,y,y',z \in X$ with 
    $$w = [0,x][0,y][0,z] = [0,x][0,y'][0,z], \, w_1 = [0,xy][0,z] \text{ and } w_2 = [0,x][0,y'z].$$ Since $w \not \in U$ and $\dom\af_0$ is an ideal of $X$, $y \not\in\dom\af_0$, so as in Case 1 there exists $g \in G$ such that $y \in \dom\af_g$ and $\af_g(y) = y'$. Since $w \not\in U$, $xyz \not\in\dom\af_0$, so by \cref{afkxyz-=-xafgyz} there exists $k \in G$ such that $xyz \in\dom\af_k$ and $\af_k(xyz) = \af_e(x)\af_g(y) z = x\af_g(y)z$. Hence $w_1 \downarrow w_2$ because
    \begin{align*}
        w_1 &= [0,xy][0,z] \to [0,xyz] = [0,\af_k(xyz)] \\
        &= [0,x\af_g(y)z] = [0,xy'z] \leftarrow [0,x][0,y'z] = w_2,
    \end{align*}
    so $w$ is locally confluent.

    % Now let $w = [0,x][0,y][0,z] = [0,x][0,y'][0,z] \in U_0 \setminus U$, so $[0,xy][0,z] \leftarrow w \to [0,x][0,y'z]$. Since $y \not\in\dom\af_0$, by \cref{equiv-classes-of-Gsqcup0} there exists $g \in G$ such that $y \in \dom\af_g$ and $\af_g(y) = y'$. Since $w \not\in U$, $xyz \not\in\dom\af_0$, so by \cref{afkxyz-=-xafgyz} there exists $k \in G$ such that $xyz \in\dom\af_k$ and $\af_k(xyz) = \af_e(x)\af_g(y) z = x\af_g(y)z$. Thus,
    % $$[0,xy][0,z] \to [0,xyz] = [0,\af_k(xyz)] = [0,x\af_g(y)z] = [0,xy'z] \leftarrow [0,x][0,y'z],$$
    % so $w$ is locally confluent.

    \textit{The ``only if'' part}. Suppose every element of $U_0 \setminus U$ is locally confluent and let $g,h \in G$, $x \in \dom\af_g$, $y \in \dom \af_h$ and $z \in X^1$ be such that $xyz \not\in \dom\af_0$.

    Consider $w = [0,x][0,y][0,z] \in U_0 \setminus U$. Since $[0,x] = [0,\af_g(x)]$ and $[0,y] = [0,\af_h(y)]$,
    \begin{align*}
        w &= [0,x][0,y][0,z] \to [0,xy][0,z], \\
        w &= [0,\af_g(x)][0,\af_h(y)][0,z] \to [0,\af_g(x)\af_h(y)][0,z].
    \end{align*}
    Now, $[0,xy][0,z] \in U_0 \setminus U$ is locally confluent by hypothesis, so $[0,xyz]$ is its unique reduct. Similarly, $[0,\af_g(x)\af_h(y)][0,z]$ has the unique reduct $[0,\af_g(x) \af_h(y) z]$. Then, since $w$ is locally confluent, the reducts of $[0,xy][0,z]$ and $[0,\af_g(x)\af_h(y)][0,z]$ coincide, that is, $[0,xyz] = [0,\af_g(x) \af_h(y) z]$. Thus, since $[0,xyz]$ is of the form \cref{0-x-equiv-class}, there exists $k \in G$ such that $xyz \in \dom\af_k$ and $\af_k(xyz) = \af_g(x) \af_h(y)z$, so \cref{afkxyz-=-xafgyz} follows.
\end{proof}

\begin{teorema}\label{locally-confluent-necessary-and-sufficient-conditions}
    % Suppose $\af$ satisfies \cref{afkxyz-=-xafgyz,af0xyz-af0afgxafhyz,afg1-afg((xy)z)-group}. Then $(X_M^+,\to)$ is locally confluent.

    Let $M$ be the monoid $G^0$, where $G$ is a group, and $\af$ a strong partial action of $M$ on a semigroup $X$ such that $\dom\alpha_m$ and $\im\alpha_m$ are ideals of $X$ for all $m \in M$. Then the rewriting system $(X_M^+,\to)$ is locally confluent if and only if \cref{afg1-afg((xy)z)-group,af0xyz-af0afgxafhyz,afkxyz-=-xafgyz} are satisfied.
\end{teorema}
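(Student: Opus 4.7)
My plan is to establish the two implications separately, leaning heavily on the machinery already developed in the section.

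For the \emph{only if} direction, assume $(X_M^+,\to)$ is locally confluent. Then \cref{locally-confluent-implies-glob} ensures that $\af$ is globalizable, so \cref{af-globalizable-implies-conditions} delivers \cref{afg1-afg((xy)z)-group,af0xyz-af0afgxafhyz}. Since every word of $U_0\setminus U$ is in particular locally confluent, the ``only if'' half of \cref{U0-setminus-U-loc-conf} supplies \cref{afkxyz-=-xafgyz}. This direction is thus immediate from what has already been proved.

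For the \emph{if} direction, assume \cref{afg1-afg((xy)z)-group,af0xyz-af0afgxafhyz,afkxyz-=-xafgyz} hold. By \cref{locally-confluent-iff-2-3-letter-words-are-locally-confluent}, it suffices to show that each $w\in X_M^+$ with $|w|\in\{2,3\}$ is locally confluent. The plan is to classify the letters of $w$ using \cref{equiv-classes-of-Gsqcup0}: type I for classes of the form \cref{0-x-equiv-class}, type II for classes of the form \cref{g-x-equiv-class}, and type III for classes of the form \cref{g-x-0-x-equiv-class}. A type III letter $[g,x]$ with $x\in\im\af_0$ satisfies $[g,x]=[0,x]$ (since $x\in\dom\af_0$ with $\af_0(x)=x$ by \cref{imaf0-is-invariant-by-all-m}) and therefore lies in the ideal $U'$; hence any word containing a type III letter is in $U'$ and is locally confluent by \cref{w-in-U'-is-locally-confluent}. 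Words consisting solely of type I letters lie in $U_0$ and are handled by \cref{U0capU-locally-confluent} when in $U$ and by \cref{U0-setminus-U-loc-conf} when not. Words consisting solely of type II letters lie in $U_G$ and are handled by \cref{words-in-UG-are-loc-conf}.

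The main obstacle is the remaining mixed case, where $w$ has both type I and type II letters but no type III letter. Since a type I letter admits only representatives with first coordinate $0$ and a type II letter admits only representatives with first coordinate in $G$, no reduction is possible at the boundary between the two types: the letters of $w$ therefore split into maximal same-type blocks, and every reduction is internal to a single block. For $|w|=2$ each block has length $1$, so $w$ is in normal form and trivially locally confluent. For $|w|=3$ the block decomposition is either $1+1+1$ (again no reductions) or $2+1$ in some order, yielding a unique length-$2$ block of type I or of type II; any two distinct reductions of $w$ therefore coincide outside this block, so writing $w = u_1 v$ or $w = v u_1$ with $v$ the length-$2$ block, joinability of the two reducts of $v$ (provided by \cref{U0capU-locally-confluent,U0-setminus-U-loc-conf,words-in-UG-are-loc-conf} applied to $v$) lifts through concatenation with $u_1$ to joinability in $w$. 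I expect this block-decomposition argument to be the most delicate step, because it rests on a careful verification that types I and II cannot share a common representative's first coordinate, which is precisely what blocks any cross-type reduction and reduces the mixed case to the uniform ones already handled.
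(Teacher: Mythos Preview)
Your proof is correct and takes essentially the same approach as the paper. The only difference is organizational: rather than first reducing to $|w|\le 3$, the paper directly handles $\ol{U'}$ via \cref{olU'-is-locally-confluent} and then, for any $w\in X_M^+\setminus U'$, writes $w=u_1\cdots u_k$ as an alternating concatenation of maximal $U_0\setminus U'$ and $U_G\setminus U'$ blocks (exactly your types I and II), observing as you do that every reduction is internal to a single block.
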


\begin{proof}
    First assume \cref{afkxyz-=-xafgyz,af0xyz-af0afgxafhyz,afg1-afg((xy)z)-group} are satisfied. By \cref{olU'-is-locally-confluent}, it suffices to show that every word of $X_M^+ \setminus U'$ is locally confluent.

    Let $w \in X_M^+ \setminus U'$, so as in the proof of \cref{element-related-to-one-that-reduces-to-0fw-does-so-too} we have $w = u_1 \dots u_k$ where the words $u_i$ alternate between $U_0 \setminus U'$ and $U_G \setminus U'$ and a reduct $w'$ of $w$ is of the form $u_1 \dots u_i' \dots u_k$ for some $i \in \{1,\dots,k\}$, where $u_i \to u_i'$. Hence, $w$ is locally confluent, because the words in $U_0$ are locally confluent by \cref{U0capU-locally-confluent,U0-setminus-U-loc-conf}, and the words in $U_G$ are locally confluent by \cref{words-in-UG-are-loc-conf}.

    Conversely, assume $(X_M^+,\to)$ is locally confluent. Then $\af$ satisfies \cref{af0xyz-af0afgxafhyz,afg1-afg((xy)z)-group} by \cref{locally-confluent-implies-glob,af-globalizable-implies-conditions} and $\af$ satisfies \cref{afkxyz-=-xafgyz} by \cref{U0-setminus-U-loc-conf}.
\end{proof}

It follows from \cref{locally-confluent-necessary-and-sufficient-conditions} that the globalizable strong partial action $\alpha$ of \cref{exemplo-non-locally-confluent-but-globalizable} does not satisfy \cref{afkxyz-=-xafgyz}. In the next example, we show this explicitly.

%Let us show explicitly that the partial action $\alpha$ of \cref{exemplo-globalizable-not-loc-conf} does not satisfy \cref{afkxyz-=-xafgyz}.

\begin{exemplo}
    Let $\alpha$ be as in \cref{exemplo-non-locally-confluent-but-globalizable}. Take $x = (2,0) \in X$, $y = (0,2) \in \dom\af_g$ and $z = 1 \in X^1$. Observe that $xyz = (0,0) \not\in\dom\af_0$ and $\af_e(x)\af_g(y)z = (4,0)$, but there is no $k \in G$ such that $\af_k(xyz) = (4,0)$, so $\af$ does not satisfy \cref{afkxyz-=-xafgyz}.

    % Let $\alpha$ be as in \cref{exemplo-non-locally-confluent-but-globalizable}. Let $x = j, y = k, z = 1 \in Q$, $g \in \Aut(Q)$ be as in \cref{exemplo-g-definicao} and $h = id_{Q} \in \Aut(Q)$.

    % Observe that $xyz = jk = i$ and $\af_g(x) \af_h(y) z = kk = -1$. Then $xyz \not \in \dom\af_0$, but there is no $k \in G$ such that $\af_k(xyz) = \af_g(x)\af_h(y)z$, since $-1$ and $i$ have different orders in $Q$.

    % Let $x = i \coloneqq jk$, $y = j$, $z = k$, $g$ the identity element of $G$ and $h : Q \to Q$ be the automorphism of $Q$ given on the generators by $g(j) = k$ and $g(k) = j$
    
    % Observe that $xyz = ijk = j^2 \coloneqq -1$ and $\af_g(x)\af_h(y)z = i\af_h(j)k = ikk = -i$. Then $xyz \not \in \dom\af_0$, but there is no $k \in G$ such that $\af_k(xyz) = \af_g(x)\af_h(y)z$, since $-1$ and 

    % Hence, $\af$ is globalizable, but does not satisfy \cref{afkxyz-=-afgxafhyz-2}.

    % Moreover, by taking $x = j$, $y = k$ and $g$ as before we have $xy = jk = i$ and $\af_g(x)y = kk = -1$, but there is no $h \in G$ such that $\af_h(xy) = \af_h(\af_g(x)y)$.
\end{exemplo}

\section{Applications}\label{sec-appl}

\subsection{Partial actions of the multiplicative monoid $\{0,1\}$ on semigroups}

In this subsection, we apply the results of \cref{section-Gsqcup0} to the simplest case where $G = \{1\}$, so that $M = \{0,1\}$. 

Observe that \cref{af0xyz-af0afgxafhyz} always holds for $m=1$. Hence, when $M = \{0,1\}$ and $\im\af_0$ is an ideal of $X$, condition \cref{af0xyz-af0afgxafhyz} is equivalent to
\begin{enumerate}[label=(H)]
    \item\label{af0xyz-=-xaf0yz} for all $y \in \dom\af_0$ and $x,z \in X^1$:
\begin{align*}
    \af_0(xyz) = x \af_0(y)z.
\end{align*}
\end{enumerate}

\begin{corolario}\label{spa-01-on-semigroup-globalizable-equiv}
    Let $M$ be the multiplicative monoid $\{0,1\}$ and $\af$ a strong partial action of $M$ on a semigroup $X$ such that $\dom\alpha_0$ and $\im\alpha_0$ are ideals of $X$. Then $\alpha$ is globalizable if and only if $\to$ is locally confluent if and only if \cref{af0xyz-=-xaf0yz} is satisfied.
\end{corolario}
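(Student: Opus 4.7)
The plan is to reduce this corollary to the main results of \cref{section-Gsqcup0}. Observe that $M = \{0,1\} = G^0$ for the trivial group $G = \{e\}$. Thus \cref{af-globalizable-iff} and \cref{locally-confluent-necessary-and-sufficient-conditions} apply: $\af$ is globalizable iff it satisfies \cref{afg1-afg((xy)z)-group,af0xyz-af0afgxafhyz}, and $(X_M^+,\to)$ is locally confluent iff it satisfies all of \cref{afg1-afg((xy)z)-group,af0xyz-af0afgxafhyz,afkxyz-=-xafgyz}.

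The first step is to check that \cref{afg1-afg((xy)z)-group} and \cref{afkxyz-=-xafgyz} are vacuous in this setting. Since the only element of $G$ is $e$, with $\af_e = \id_X$ and $\dom\af_e = X$, \cref{afg1-afg((xy)z)-group} collapses to the associativity of the product in $X$, and \cref{afkxyz-=-xafgyz} is witnessed trivially by $k = e$, since $xyz \in X = \dom\af_e$ and $\af_e(xyz) = xyz = \af_e(x)\af_e(y)z$. Consequently, both globalizability and local confluence reduce to \cref{af0xyz-af0afgxafhyz} alone, so it suffices to prove that \cref{af0xyz-af0afgxafhyz} is equivalent to \cref{af0xyz-=-xaf0yz}.

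For that equivalence, the $m=1$ case of \cref{af0xyz-af0afgxafhyz} is immediate, so only $m=0$ matters. The decisive ingredient is \cref{imaf0-is-invariant-by-all-m}, which gives $\im\af_0 \subseteq \dom\af_0$ and $\af_0|_{\im\af_0} = \id$. Combined with the hypothesis that $\im\af_0$ is an ideal, this implies that for $y \in \dom\af_0$ and $x,z \in X^1$ the element $x\af_0(y)z$ lies in $\im\af_0$, hence in $\dom\af_0$, and is fixed by $\af_0$. Using also that $\dom\af_0$ is an ideal (so $xyz \in \dom\af_0$ whenever $y \in \dom\af_0$), the two conditions translate into one another via the identity chain $\af_0(xyz) = \af_0(x\af_0(y)z) = x\af_0(y)z$, supplying both implications.

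I do not expect a serious obstacle here: the proof is a direct specialization of the main theorems of the previous section. The only mild subtlety is remembering to invoke that $\af_0$ fixes $\im\af_0$ pointwise in order to pass between the weaker form \cref{af0xyz-af0afgxafhyz} (an equality inside $\af_0$) and the sharper form \cref{af0xyz-=-xaf0yz} (an equality of elements of $X$).
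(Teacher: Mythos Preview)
Your proposal is correct and follows exactly the paper's approach: reduce to \cref{af-globalizable-iff,locally-confluent-necessary-and-sufficient-conditions}, observe that \cref{afg1-afg((xy)z)-group,afkxyz-=-xafgyz} are automatic for the trivial group, and verify that \cref{af0xyz-af0afgxafhyz} is equivalent to \cref{af0xyz-=-xaf0yz}. Your elaboration of the equivalence (using \cref{imaf0-is-invariant-by-all-m} together with the ideal hypotheses on $\dom\af_0$ and $\im\af_0$) is precisely what the paper has in mind when it states this equivalence just before the corollary.
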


\begin{proof}
    % Notice that $M$ may be regarded as $G^0$ where $G = \{1\}$. Also observe that any partial action of $M$ on $X$ trivially satisfy \cref{af0xy=af0afgxafhy2,afkxy-=-afgxafhy2,af0xyz-=-af0xafgyz2,afhxyz-=-xafgyz2,afg1-afg((xy)z)-group2}.

    % Thus, if \cref{af_m(xa)y=af_m(xb)-group2,xaf_m(ay)=af_m(by)-group2} are satisfied, then \cref{af0xy=af0afgxafhy2,afkxy-=-afgxafhy2,af_m(xa)y=af_m(xb)-group2,xaf_m(ay)=af_m(by)-group2,af0xyz-=-af0xafgyz2,afhxyz-=-xafgyz2,afg1-afg((xy)z)-group2} are satisfied, and, hence, $\af$ is globalizable by \cref{Gsqcup0-globalizable-sufficient-condition}.

    % Conversely, if $\af$ is globalizable, then \cref{af_m(xa)y=af_m(xb)-group2,xaf_m(ay)=af_m(by)-group2} are satisfied by \cref{globalizable-implies-certain-conditions-Gsqcup0}.
    % Notice that $M$ may be regarded as $G^0$ where $G = \{1\}$. Also observe that any partial action of $M$ on $X$ trivially satisfy \cref{afg1-afg((xy)z)-group2,af0xyz-af0afgxafhyz-2,afkxyz-=-afgxafhyz-2}.

    % Thus, if \cref{af_m(xa)y=af_m(xb)-group2,xaf_m(ay)=af_m(by)-group2} are satisfied, then \cref{af_m(xa)y=af_m(xb)-group2,xaf_m(ay)=af_m(by)-group2,afg1-afg((xy)z)-group2,af0xyz-af0afgxafhyz-2,afkxyz-=-afgxafhyz-2} are satisfied, and, hence, $\af$ is globalizable by \cref{Gsqcup0-globalizable-sufficient-condition}.

    % Conversely, if $\af$ is globalizable, then \cref{af_m(xa)y=af_m(xb)-group2,xaf_m(ay)=af_m(by)-group2} are satisfied by \cref{globalizable-implies-certain-conditions-Gsqcup0}.
 It follows from \cref{af-globalizable-iff,locally-confluent-necessary-and-sufficient-conditions}, since \cref{afg1-afg((xy)z)-group,afkxyz-=-xafgyz} are always true and \cref{af0xyz-af0afgxafhyz} is equivalent to \cref{af0xyz-=-xaf0yz}.
\end{proof}

When $|G| > 1$, conditions \cref{afg1-afg((xy)z)-group,af0xyz-=-xaf0yz} are not sufficient for $\af$ to be globalizable, as the following example illustrates.

\begin{exemplo}\label{exemplo-non-globalizable-unitary}
    Let $G$ be the group $\{1,g\}$, $M = G^0$ and $X$ the multiplicative semigroup $\Z^3$. Define a strong partial action $\af$ of $M$ on $X$ where
    $$\dom\af_g = X, \ \ \ \dom\af_0 = \{0\} \times \{0\} \times \Z$$
    and $\af_g$ and $\af_0$ are given by 
    $$\af_g((a,b,c)) = (b,a,c) \ \ \text{and} \ \ \af_0((0,0,a)) = (0,0,a).$$

    It is a simple verification that $\af$ satisfies \cref{afg1-afg((xy)z)-group,af0xyz-=-xaf0yz}.

    However, by taking $m = g$, $x = (0,1,0) \in X$, $y = (1,0,0) \in \dom\af_m$ and $z = 1 \in X^1$, we have $xyz = (0,0,0) \in \dom\af_0$, but $x\af_m(y)z = x \not\in\dom\af_0$, so $\af$ does not satisfy \cref{af0xyz-af0afgxafhyz} and, thus, by \cref{af-globalizable-iff}, $\af$ is not globalizable.
\end{exemplo}

% \begin{exemplo}
%     Let $G$ be the group $\{e,g\}$, where $e$ is the identity of $G$, $M = G^0$, and $S$ the multiplicative monoid of the ring $\Z_2 \times \Z_2 \times \Z_2$, where $\Z_2=\{0,1\}$. Define a strong partial action of $M$ on $S$ as follows. Let $\dom\af_g = S \setminus \{(1,1,1)\}$, $\af_g : \dom\af_g \to \dom\af_g$ be given by $\af_g(x,y,z) = (y,x,z)$, $\dom\af_0 = \{0\} \times \{0\} \times \Z_2$, and $\af_0 = id_{\dom\af_0}$.

%     It is a simple verification that $\af$ satisfies \cref{afg1-afg((xy)z)-group,af0xyz-=-xaf0yz}.

%     Now, let $x = (1,0,1)$, $y = (0,1,1)$ and $z = (0,0,1)$. Let $(\beta,\iota)$ be the reflection of $\af$ in the category of global actions of $M$ on semigroups given in \cref{reflexao-em-semigrupos-eh-quociente-da-reflexao-em-set}. Then in $X_M^+/R$ we have
%     $$[e,z] = [0,z] = [0,xy] = [0,x][0,y] = [0,x][0,x] = [0,xx] = [0,x],$$
%     so $\beta_0(\iota(x)) = \iota(z)$. However, $x \not\in \dom\af_0$, so $(\beta,\iota)$ is not a globalization of $\af$ by \cref{globalization-def}. Hence, $\af$ is not globalizable by \cref{globalizable-iff-reflection-is-globalization}.

%     Observe that $\af$ does not satisfy \cref{af0xyz-af0afgxafhyz}: we have $xy = z \in \dom\af_0$, but $\af_g(x) \af_e(y) = y y = y \not\in\dom\af_0$.
% \end{exemplo}

\subsection{Partial actions of the multiplicative monoid $\{0,1\}$ on algebras}

Let $M$ be the multiplicative monoid $\{0,1\}$, $X \in \alg{\K}$, and $\af$ a strong partial action of $M$ on $X$.

Consider the coproduct $\coprod_{m \in M} X = X \coprod X \in \alg{\K}$ with canonical inclusion maps $u_0$ and $u_1$. For each $x \in X$, denote $u_0(x)$ by $(0,x)$ and $u_1(x)$ by $(1,x)$.

Let $Y = (X \coprod X)/I$, where $I$ is the ideal of $X \coprod X$ generated by the elements of the form $(mn,x) - (m,\alpha_n(x))$ with $m,n \in M$ and $x \in \dom\alpha_n$. Denote by $[m,x]$ the equivalence class $(m,x)+I$ of $(m,x) \in X \coprod X$. Let $\beta$ be the action of $M$ on $Y$ given by $\beta_0([m,x]) = [0,x]$ for all $[m,x] \in Y$, and $\iota : X \to Y$ given by $\iota(x) = [1,x]$ for all $x \in X$. By \cite[Corollary 4.21]{khrypchenko2023partial}, $\iota$ is a morphism from $\alpha$ to $\beta$ and a reflection of $\alpha$ in $\act{\alg{\K}}$.

%Observe that since $\af$ is a partial action, $\af_0$ is idempotent.
\begin{rem}\label{remark-generators-of-I}
    The ideal $I$ is generated by elements of the form $(0,x) - (m,\af_0(x))$ with $m \in M$ and $x \in \dom\af_0$.
\end{rem}

%Observe that the ideal $I$ is generated by elements of the form $(0,x) - (m,\af_0(x))$ for $x \in \dom\af_0$ and $m \in M$.

% \begin{proof}
%     In $X \coprod X$, for $n = 1$ and $x \in X$, we have
%     $$(mn,x) - (m,\alpha_n(x)) = (m,x) - (m,x) = 0,$$
%     and for $n = 0$ and $x \in \dom\alpha_n$ we have
%     $$(mn,x) - (m,\alpha_n(x)) = (0,x) - (m,\af_0(x)),$$
%     so we get what is desired.
% \end{proof}

\begin{teorema}\label{apf-01-ideais-eh-glob}
    % Let $X$ be an algebra and $\alpha$ the partial action of the multiplicative monoid $M = \{0,1\}$ on $X \in \alg{\K}$ given by $\alpha(0) = [A,\iota_0,f]$, where $A$ is a subalgebra of $X$, $\iota_0$ is its inclusion map into $X$ and $f : A \to X$ factors as an idempotent morphism through $A$.

    % If $A$ and $f(A)$ are ideals of $X$, then $\alpha$ is globalizable if and only if $\ker f$ is an ideal of $X$.

    Let $X$ be a $\K$-algebra and $\alpha$ a strong partial action of the multiplicative monoid $M = \{0,1\}$ on $X$ such that $\dom\af_0$ and $\im \af_0$ are ideals of $X$. Then $\alpha$ is globalizable if and only if $\ker \af_0$ is an ideal of $X$.
\end{teorema}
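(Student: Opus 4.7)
For the ``only if'' direction, the plan is to suppose a globalization $(\bt,\iota)$ of $\af$ exists in $\act{\alg{\K}}$, acting on some $Z \in \alg{\K}$. Since $\bt_0$ is an algebra endomorphism, $\ker \bt_0$ is an ideal of $Z$, whence $\iota^{-1}(\ker \bt_0)$ is an ideal of $X$. A short check using the globalization condition identifies this preimage with $\ker \af_0$: one inclusion uses $\bt_0(\iota(x)) = \iota(\af_0(x)) = 0$ for $x \in \ker\af_0$, and the other uses that $\bt_0(\iota(x)) = 0 = \iota(0) \in \iota(X)$ forces $x \in \dom\af_0$ with $\af_0(x) = 0$ by \cref{globalization-def}.

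For the ``if'' direction, assume $K = \ker\af_0$ is an ideal. First I would derive condition \cref{af0xyz-=-xaf0yz}: by \cref{imaf0-is-invariant-by-all-m} one has $\im\af_0 \subseteq \dom\af_m$ for all $m$ and $\af_0$ fixes $\im\af_0$, so $\af_0\circ\af_0 = \af_0$ on $\dom\af_0$ and $y - \af_0(y) \in K$ for each $y \in \dom\af_0$. If $K$ is an ideal then $x(y - \af_0(y))z = xyz - x\af_0(y)z \in K$ for $x,z \in X^1$, so $\af_0(xyz) = \af_0(x\af_0(y)z)$; since $\im\af_0$ is an ideal and $\af_0$ fixes $\im\af_0$, the right-hand side equals $x\af_0(y)z$.

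Next I would produce an algebra globalization explicitly; by \cref{globalizable-iff-reflection-is-globalization} the existence of any such object suffices. Put $P = X/K$, an algebra, with projection $\pi: X \to P$, and let $Z = X \oplus P$ with the trivial-extension multiplication
\[
    (x,p)(x',p') = (xx',\ \pi(x)p' + p\pi(x') + pp'),
\]
which is associative. Define $\bt_0: Z \to Z$ by $\bt_0((x,p)) = (0, \pi(x) + p)$; a direct computation shows $\bt_0$ is an idempotent algebra endomorphism. Let $J \subseteq Z$ be the two-sided ideal generated by $\{(\af_0(y), -\pi(y)) : y \in \dom\af_0\}$.

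The key technical point is to show, using \cref{af0xyz-=-xaf0yz} (in the form $\af_0(xy) = x\af_0(y)$ and $\af_0(yx) = \af_0(y)x$ for $y \in \dom\af_0$, $x \in X$, together with $\pi(\af_0(y)) = \pi(y)$), that $J$ coincides with the $\K$-span of its generators. Setting $Z' = Z/J$ with the induced $\bt_0$ and defining $\iota : X \to Z'$ by $\iota(x) = [(x,0)]$, I would then verify that $(\bt_0,\iota)$ is a globalization of $\af$: $\iota$ is a morphism of partial actions because $[(\af_0(x), 0)] = [(0, \pi(x))] = \bt_0(\iota(x))$ in $Z'$ for $x \in \dom\af_0$; $\iota$ is injective because $(x,0) \in J$ forces $x = \af_0(s)$ for some $s \in K$, hence $x = 0$; and the globalization condition follows from a parallel computation showing that $[(0, \pi(x))] = [(y, 0)]$ in $Z'$ implies $x \in \dom\af_0$ and $\af_0(x) = y$. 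The main obstacle is identifying this ambient algebra $Z$ and verifying that $J$ is a genuine ideal — both steps rely crucially on \cref{af0xyz-=-xaf0yz}.
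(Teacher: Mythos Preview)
Your proposal is correct. The ``only if'' direction is essentially the paper's argument, just repackaged as the single observation $\ker\af_0 = \iota^{-1}(\ker\bt_0)$.

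For the ``if'' direction you take a genuinely different route from the paper. The paper works directly with the abstract reflection $Y=(X\coprod X)/I$ in $\act{\alg{\K}}$ and verifies that it is a globalization by constructing three auxiliary projections $\pi,\pi_0,\pi_1:X\coprod X\to X$ (sending $(m,x)$ to $x$, to $\delta_{m,0}x$, and to $\delta_{m,1}x$, respectively) and showing that $\pi(I)\subseteq\ker\af_0$, $\pi_0(I)\subseteq\dom\af_0$, $\pi_1(I)\subseteq\im\af_0$; these three containments are exactly what is needed to check the globalization condition case by case. You instead first derive \cref{af0xyz-=-xaf0yz} from the hypothesis (a step the paper does not isolate here) and then build a concrete model $Z'=(X\oplus X/K)/J$ by hand, using \cref{af0xyz-=-xaf0yz} to show that the ideal $J$ collapses to the span of its generators. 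Your approach gives an explicit small globalization and makes transparent the link with the semigroup case (via \cref{spa-01-on-semigroup-globalizable-equiv}), while the paper's approach has the advantage of showing directly that the \emph{universal} globalization exists, without passing through an ad-hoc intermediate object.
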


\begin{proof}
    %In $\coprod_{m \in M} X = X \coprod X$, denote $u_0(x)$ by $(0,x)$ and $u_1(x)$ by $(1,x)$ for all $x \in X$. %To show that $\af$ is globalizable, it suffices to show that $(\beta,\iota)$ is a globalization of $\af$.

    % Let $Y = (X \coprod X)/I$ where $I$ is the ideal of $X \coprod X$ generated by the elements of the form $(mn,x) - (m,\alpha_n(x))$ with $m,n \in M$ and $x \in \dom\alpha_n$. Denote by $[m,x]$ the equivalence class represented by $(m,x) \in X \coprod X$. Let $\beta$ be the action of $M$ on $Y$ given by $\beta_0([m,x]) = [0,x]$ for all $[m,x] \in Y$, and $\iota : X \to Y$ given by $\iota(x) = [1,x]$ for all $x \in X$. By \cite[Corollary 4.20]{khrypchenko2023partial}, $\iota : \alpha \to \beta$ is a reflection of $\alpha$ in $\act{\C}$.

    % Observe that for $n = 1$ then for all $x \in X$ we have
    % $$(mn,x) - (m,\alpha_n(x)) = (m,x) - (m,x) = 0.$$
    % For $n = 0$ and $m = 0$, for all $x \in \dom\alpha_n = A$ we have
    % $$(mn,x) - (m,\alpha_n(x)) = (0,x) - (0,f(x)).$$
    % And for $m = 1$ and $n = 0$, for all $x \in \dom\alpha_n = A$ we have
    % $$(mn,x) - (m,\alpha_n(x)) = (0,x) - (1,f(x)).$$
    % Hence, $I$ is the ideal generated by the elements of the form
    % $$(0,x) - (m,f(x))$$
    % for $x \in A$ and $m \in M$.
    We start by constructing auxiliary algebra morphisms $\pi,\pi_1,\pi_0 : X \coprod X \to X$. 

    Let $\pi : X \coprod X \to X$ be the unique morphism such that the diagram
% https://q.uiver.app/#q=WzAsNCxbMSwxLCJYXFxjb3Byb2QgWCJdLFswLDAsIlgiXSxbMiwwLCJYIl0sWzEsMiwiWCJdLFsxLDAsInVfMCJdLFsyLDAsInVfMSIsMl0sWzEsMywiaWRfWCIsMl0sWzIsMywiaWRfWCJdLFswLDMsIlxccGkiLDIseyJzdHlsZSI6eyJib2R5Ijp7Im5hbWUiOiJkYXNoZWQifX19XV0=
\[\begin{tikzcd}
	X && X \\
	& {X\coprod X} \\
	& X
	\arrow["{u_0}", from=1-1, to=2-2]
	\arrow["{id_X}"', from=1-1, to=3-2]
	\arrow["{u_1}"', from=1-3, to=2-2]
	\arrow["{id_X}", from=1-3, to=3-2]
	\arrow["\pi"', dashed, from=2-2, to=3-2]
\end{tikzcd}\]
 commutes. Observe that
 $$\pi((0,x) - (m,\af_0(x))) = \pi((0,x)) - \pi((m,\af_0(x))) = x-\af_0(x) \in \ker{\af_0}$$
 for all $x \in \dom\af_0$. Thus, by \cref{remark-generators-of-I}, $\pi(I)$ is contained in the ideal of $X$ generated by $\ker\af_0$. %(observe that $\ker{\af_0}$ may not be an ideal of $X$ if $A$ is not an ideal of $X$).

     %And consider $\pi_1 : X \coprod X \to X$ the unique morphism such that the diagram
     Now let $\pi_1 : X \coprod X \to X$ be the unique morphism such that the diagram
% https://q.uiver.app/#q=WzAsNCxbMSwxLCJYXFxjb3Byb2QgWCJdLFswLDAsIlgiXSxbMiwwLCJYIl0sWzEsMiwiWCJdLFsxLDAsInVfMCJdLFsyLDAsInVfMSIsMl0sWzEsMywiaWRfWCIsMl0sWzIsMywiaWRfWCJdLFswLDMsIlxccGkiLDIseyJzdHlsZSI6eyJib2R5Ijp7Im5hbWUiOiJkYXNoZWQifX19XV0=
\[\begin{tikzcd}
	X && X \\
	& {X\coprod X} \\
	& X
	\arrow["{u_0}", from=1-1, to=2-2]
	\arrow["{0}"', from=1-1, to=3-2]
	\arrow["{u_1}"', from=1-3, to=2-2]
	\arrow["{id_X}", from=1-3, to=3-2]
	\arrow["\pi_1"', dashed, from=2-2, to=3-2]
\end{tikzcd}\]
 commutes. Observe that
 \begin{align*}
     \pi_1((0,x) - (0,\af_0(x))) &= \pi_1((0,x)) - \pi_1((0,\af_0(x))) = 0, \\
     \pi_1((0,x) - (1,\af_0(x))) &= \pi_1((0,x)) - \pi_1((1,\af_0(x))) = -\af_0(x)
 \end{align*}
 for all $x \in \dom\af_0$. Thus, by \cref{remark-generators-of-I}, $\pi_1(I) \subseteq \im\af_0$.

      Finally, let $\pi_0 : X \coprod X \to X$ be the unique morphism such that the diagram
% https://q.uiver.app/#q=WzAsNCxbMSwxLCJYXFxjb3Byb2QgWCJdLFswLDAsIlgiXSxbMiwwLCJYIl0sWzEsMiwiWCJdLFsxLDAsInVfMCJdLFsyLDAsInVfMSIsMl0sWzEsMywiaWRfWCIsMl0sWzIsMywiaWRfWCJdLFswLDMsIlxccGkiLDIseyJzdHlsZSI6eyJib2R5Ijp7Im5hbWUiOiJkYXNoZWQifX19XV0=
\[\begin{tikzcd}
	X && X \\
	& {X\coprod X} \\
	& X
	\arrow["{u_0}", from=1-1, to=2-2]
	\arrow["{id_X}"', from=1-1, to=3-2]
	\arrow["{u_1}"', from=1-3, to=2-2]
	\arrow["{0}", from=1-3, to=3-2]
	\arrow["\pi_0"', dashed, from=2-2, to=3-2]
\end{tikzcd}\]
 commutes. Observe that
 \begin{align*}
     \pi_0((0,x) - (0,\af_0(x))) &= \pi_0((0,x)) - \pi_0((0,\af_0(x))) = x - \af_0(x), \\
     \pi_0((0,x) - (1,\af_0(x))) &= \pi_0((0,x)) - \pi_0((1,\af_0(x))) = x
 \end{align*}
 for all $x \in \dom\af_0$. Thus, by \cref{remark-generators-of-I}, $\pi_0(I) \subseteq \dom\af_0$.

 %Since $A$, $\ker{f}$ and $f(A)$ are ideals of $X$, it follows that $\pi$ sends $I$ to $\ker{f}$, $\pi_1$ sends $I$ to $f(A)$ and $\pi_0$ sends $I$ to $A$.
 Now we proceed with the proof of the proposition.
 
 \textit{The ``if'' part}. Assume that $\ker{\af_0}$ is an ideal of $X$. Then $\pi(I) \subseteq \ker{\af_0}$.

Let us show that $(\bt,\iota)$ is a globalization of $\af$. For let $x,y \in X$ and $m \in M$ with $\beta_m(\iota(x)) = \iota(y)$. This means that $[m,x] = [1,y]$, that is $(m,x)-(1,y)\in I$.
    %By \cref{globalization-def}, $(\bt,\iota)$ is a globalization of $\af$ if $\iota$ is injective and for all $x,y \in X$ and $m \in M$ we have that
    %$$[m,x] = [1,y]$$
    %implies that $x \in \dom\alpha_m$.

\textit{Case 1}. $m = 1$. Then $(1,x-y) = (1,x) - (1,y) \in I$, so $x - y = \pi((1,x-y)) \in \ker{\af_0}$ and $x - y = \pi_1((1,x-y)) \in \im\af_0$. By \cref{imaf0-is-invariant-by-all-m}, $\af_0$ is idempotent. Thus, $x-y = \af_0(x-y) = 0$, so $\af_m(x) = x = y$.

\textit{Case 2}. $m = 0$. Then $(0,x) - (1,y) \in I$ so $x = \pi_0((0,x) - (1,y)) \in \dom\af_0$, $y = \pi_1((1,y) - (0,x)) \in \im \af_0$ and $x - y = \pi((0,x) - (1,y)) \in \ker\af_0$. Thus, by \cref{imaf0-is-invariant-by-all-m} we have 
$$\af_0(x) - y = \af_0(x) - \af_0(y) = \af_0(x - y) = 0,$$
so $\af_m(x) = \af_0(x) = y$.

Hence, $\alpha$ is globalizable, as desired.

\textit{The ``only if'' part}. Suppose $\alpha$ is globalizable and let $(\beta,\iota)$ be a globalization of $\alpha$. Then, given $a \in \ker \af_0$ and $x \in X$, we have
$$\iota(\af_0(ax)) = \bt_0(\iota(ax)) = \bt_0(\iota(a)) \bt_0(\iota(x)) = \iota(\af_0(a)) \bt_0(x) = 0,$$
%and
%$$\iota(\af_0(xa)) = \bt_0(\iota(xa)) = \bt_0(\iota(x)) \bt_0(\iota(a)) = \bt_0(\iota(x)) \iota(\af_0(a)) = 0,$$
so $\af_0(ax) = 0$. Similarly, $\af_0(xa) = 0$. Thus, $\ker \af_0$ is an ideal of $X$.
\end{proof}

\begin{corolario}\label{globalizable-in-semigroups-iff-in-algebras}
    % Assume that $M$ is the multiplicative monoid $\{0,1\}$, $X$ is the underlying multiplicative monoid of a $\K$-algebra, $\dom\alpha_0$ and $\im\alpha_0$ are algebra ideals of $X$ and $\af_0$ is a morphism of algebras. Then $\af$ is globalizable if and only if it is globalizable when regarded as a partial action on an object in $\alg{\K}$.
    Let $M$ be the multiplicative monoid $\{0,1\}$, $X$ a $\K$-algebra and $\af$ a strong partial action of $M$ on $X$ such that $\dom\af_0$ and $\im \af_0$ are ideals of $X$. Then $\af$ is globalizable if and only if it is globalizable when regarded as a strong partial action on the underlying multiplicative semigroup of $X$.
\end{corolario}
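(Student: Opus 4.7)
The plan is to combine the two characterizations established earlier in this subsection. By \cref{spa-01-on-semigroup-globalizable-equiv}, $\af$ is globalizable in $\act{\Sem}$ if and only if condition \cref{af0xyz-=-xaf0yz} holds, while by \cref{apf-01-ideais-eh-glob}, $\af$ is globalizable in $\act{\alg{\K}}$ if and only if $\ker\af_0$ is an ideal of $X$. Thus the corollary will follow once I prove that, under the standing assumption that $\dom\af_0$ and $\im\af_0$ are ideals of $X$, condition \cref{af0xyz-=-xaf0yz} is equivalent to the assertion that $\ker\af_0$ is an ideal of $X$.

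For the easy implication \cref{af0xyz-=-xaf0yz} $\Rightarrow$ $\ker\af_0$ is an ideal, I would pick $a \in \ker\af_0$ and $x \in X$. Since $\dom\af_0$ is an ideal, $ax, xa \in \dom\af_0$; applying \cref{af0xyz-=-xaf0yz} with $y = a$ and the remaining factor equal to $1 \in X^1$ immediately gives $\af_0(ax) = \af_0(a)x = 0$ and $\af_0(xa) = x\af_0(a) = 0$. Combined with the fact that $\ker\af_0$ is a $\K$-submodule of $X$ (since $\af_0$ is a $\K$-algebra morphism on its domain), this makes $\ker\af_0$ a two-sided ideal.

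The converse is the main technical step, and the one where the algebra structure is crucial. The idea is to exploit the identity $xyz - x\af_0(y)z = x(y-\af_0(y))z$ together with $\K$-linearity of $\af_0$. Given $y \in \dom\af_0$, by \cref{imaf0-is-invariant-by-all-m} the element $\af_0(y)$ lies in $\im\af_0 \subseteq \dom\af_0$ and is fixed by $\af_0$, so $y - \af_0(y) \in \ker\af_0$. Assuming $\ker\af_0$ is an ideal, for $x, z \in X^1$ the element $x(y - \af_0(y))z$ also lies in $\ker\af_0 \subseteq \dom\af_0$, and $\K$-linearity of $\af_0$ gives $\af_0(xyz) = \af_0(x\af_0(y)z)$. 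Since $\im\af_0$ is an ideal containing $\af_0(y)$, the product $x\af_0(y)z$ lies in $\im\af_0$, so \cref{imaf0-is-invariant-by-all-m} applied once more yields $\af_0(x\af_0(y)z) = x\af_0(y)z$, completing the verification of \cref{af0xyz-=-xaf0yz}.

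The hard part is this second direction: the manipulation $xyz - x\af_0(y)z = x(y - \af_0(y))z$ truly requires subtraction, so $\K$-linearity of $\af_0$ is doing the essential work. This is consistent with the remark in the introduction that globalizability in $\act{\alg{\K}}$ and in $\act{\Sem}$ are not equivalent in general — the somewhat surprising phenomenon here is that for $M = \{0,1\}$ the algebraic and semigroup-theoretic conditions happen to coincide.
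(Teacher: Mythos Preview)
Your argument is correct, and for the direction ``semigroup globalizable $\Rightarrow$ algebra globalizable'' it coincides with the paper's proof (both deduce \cref{af0xyz-=-xaf0yz} from \cref{spa-01-on-semigroup-globalizable-equiv}, then show $\ker\af_0$ is an ideal, then invoke \cref{apf-01-ideais-eh-glob}).

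The difference lies in the other direction. The paper handles ``algebra globalizable $\Rightarrow$ semigroup globalizable'' by the one-line observation that any globalization $(\beta,\iota)$ of $\af$ in $\act{\alg{\K}}$ is, upon forgetting the linear structure, already a globalization of $\af$ in $\act{\Sem}$. You instead prove the implication ``$\ker\af_0$ is an ideal $\Rightarrow$ \cref{af0xyz-=-xaf0yz}'' directly, via the identity $xyz - x\af_0(y)z = x(y-\af_0(y))z$ and $\K$-linearity. Your route is longer but yields as a by-product the neat equivalence \cref{af0xyz-=-xaf0yz} $\Leftrightarrow$ $\ker\af_0$ is an ideal of $X$ (under the standing hypotheses), which the paper does not isolate; the paper's route is more economical and avoids any computation for that direction.
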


\begin{proof}
    If $\af$, seen as a strong partial action on an object in $\alg{\K}$, is globalizable, then a globalization of $\af$ in $\act{\alg{\K}}$ may be regarded as a globalization of $\af$ in $\Sem$.

    Conversely, suppose that $\af$ is globalizable in $\Sem$. By \cref{spa-01-on-semigroup-globalizable-equiv}, $\af$ satisfies \cref{af0xyz-=-xaf0yz}. Then, given $a \in \ker\af_0$, for any $x \in X$ we get
    $$\af_0(xa) = x\af_0(a) = 0 \text{ and } \af_0(ax) = \af_0(a)x = 0,$$
    so $\ker \af_0$ is an ideal of $X$. Hence, by \cref{apf-01-ideais-eh-glob}, $\af$ is globalizable in $\act{\alg{\K}}$.
\end{proof}

%We will see below that the statement \cref{globalizable-in-semigroups-iff-in-algebras} does not hold for monoids of order greater than $2$. 

\begin{rem}
    %The statement of \cref{globalizable-in-semigroups-iff-in-algebras} is not necessarily true for $M \neq \{0,1\}$, as will be shown in \cref{non-globalizable-in-algk-globalizable-in-semi}.

    In general, for a strong partial action of a monoid $M$ on an algebra $X$, the existence of a globalization of $\af$ in $\act{\Sem}$ does not imply the existence of a globalization of $\af$ in $\act{\alg{\K}}$. The corresponding example will be given later (see \cref{non-globalizable-in-algk-globalizable-in-semi}).
\end{rem}

% \begin{exemplo}
%     Let $\af$ be the strong partial action in \cref{apf-of-01-on-algebra-not-globalizable} regarded as a strong partial action of $M$ on the multiplicative monoid $X$. Then, by \cref{globalizable-in-semigroups-iff-in-algebras}, $\af$ is not globalizable.

%     % More explicitly, observe that $\af_0(E_{23}) = 0 = \af_0(0)$, so let $w = [0,E_{12}][0,E_{23}] = [0,E_{12}][0,0]$ in $X_M^+$. Then 
%     % \begin{align*}
%     %     [1,0] = [0,0] = [0,E_{12}0] \leftarrow w \to [0,E_{12} E_{23}] = [0,E_{13}] = [1,E_{13}],
%     % \end{align*}
%     % % $$w \to [0,E_{12} E_{23}] = [0,E_{13}] = [1,E_{13}]$$
%     % % $$w \to [0,E_{12}0] = [0,0] = [1,0],$$
%     % but $[1,E_{13}] \neq [1,0]$ by \cref{equiv-classes-of-Gsqcup0}.
% \end{exemplo}

\begin{exemplo}\label{apf-of-01-on-algebra-not-globalizable}
    % Let $X$ be the algebra of upper triangular $3$ by $3$ matrices over a field $\K$ with zeroes in the diagonal. Define a partial action $\af$ of $M = \{0,1\}$ on $X$ where $\dom\af_0=span\{E_{13},E_{23}\}$ and $\af_0$ is the linear map such that 
    % $$\af_0(E_{13})=E_{13} \text{ and } \af_0(E_{23})=0.$$
    Let $X$ be the $\K$-algebra of strictly upper triangular $3$ by $3$ matrices. Let $\af$ be the strong partial action of $M = \{0,1\}$ on $X$ with $\dom\af_0=\Span\{E_{13},E_{23}\}$ and $\af_0$ being the linear map such that 
    $$\af_0(E_{13})=E_{13} \text{ and } \af_0(E_{23})=0.$$
    
    % Notice that the product of any two elements in in $\dom\af_0$ is always zero, so $\af_0$ is indeed a morphism of algebras. Moreover, $\af_0$ corestricts to an idempotent algebra homomorphism from $\dom\af_0$ to $\dom\af_0$, so $\af$ is indeed a strong partial action of $M$ on $X$. Observe that $\im\af_0=span\{E_{13}\}$ and $\ker\af_0=span\{E_{23}\}$.
    Since $(\dom\af_0)^2 = \{0\}$, the map $\af_0$ is indeed a morphism of algebras. Moreover, $\af_0^2 = \af_0$, so $\af$ is indeed a strong partial action of $M$ on $X$.

    % Then $\dom\af_0$ and $\im\af_0$ are ideals of $X$. Indeed, $X\cdot X \subseteq \im\af_0 \subseteq \dom\af_0$. Moreover, $\ker\af_0$ is not an ideal of $X$, because
    % $$E_{12} E_{23} = E_{13} \not \in \ker\af_0.$$
    Then $\dom\af_0$ and $\im\af_0 = \Span\{E_{13}\}$ are ideals of $X$ because 
    $$X^2 = \im\af_0 \subseteq \dom\af_0,$$
    while $\ker\af_0$ is not because
    $$E_{12} E_{23} = E_{13} \not \in \ker\af_0.$$

    Hence, by \cref{apf-01-ideais-eh-glob}, $\af$ is not globalizable. Observe by \cref{globalizable-in-semigroups-iff-in-algebras} that $\af$ is also not globalizable when regarded as a strong partial action on the multiplicative monoid of $X$.
\end{exemplo}

\subsection{Partial actions of $G^0$ whose domains are unital ideals}

Let $M$ be the monoid $G^0$, where $G$ is a group, $X$ a semigroup and $\af$ a strong partial action of $M$ on $X$ such that $\dom\af_m$ and $\im\af_m$ are ideals of $X$ for all $m \in M$.

Consider the following condition:
\begin{enumerate}[label=(LC2')]
    \item\label{xyz-in-daf0-implies-xafmyz-in-daf0} for all $g \in G$, $y \in \dom\af_g$ and $x,z \in X^1$:
\begin{align*}
    xyz \in \dom\af_0 &\implies x\af_g(y) z \in \dom\af_0.
\end{align*}
\end{enumerate}

\begin{lema}\label{Gsqcup0-locally-confluent-equiv-unital-ideal-lema}
    % Suppose that $\dom\alpha_0$ is unital. Then $\af$ satisfies \cref{af0xyz-af0afgxafhyz} if and only if for all $m \in M$, $y \in \dom\af_m$ and $x,z \in X^1$ we have
    %     \begin{align}
    %         xyz \in \dom\af_0 &\implies x\af_m(y) z \in \dom\af_0.\label{xyz-in-daf0-implies-xafmyz-in-daf0}
    %     \end{align}
    Suppose that $\dom\alpha_0$ is unital. Then $\af$ satisfies \cref{af0xyz-af0afgxafhyz} if and only if $\af$ satisfies \cref{xyz-in-daf0-implies-xafmyz-in-daf0}.
\end{lema}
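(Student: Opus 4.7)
The implication (LC2)$\Rightarrow$(LC2') is immediate: (LC2') is just (LC2) restricted to $m\in G\subseteq M$ with the equation dropped. For the converse, the plan is a case analysis on $m\in M=G^0$. The case $m=e$ is trivial, and for $m=0$ the domain inclusion $x\af_0(y)z\in\dom\af_0$ follows from $\im\af_0\subseteq\dom\af_0$ (a consequence of \cref{AP2 set strong,AP3 set} with $m=n=0$) combined with the ideal hypothesis. For the equation $\af_0(xyz)=\af_0(x\af_0(y)z)$, I would use the identity $u$ of $\dom\af_0$ to rewrite $xyz=u(xyz)u=(ux)y(zu)$ and, via the domain part just proved, also $x\af_0(y)z=(ux)\af_0(y)(zu)$; each of the three factors of each product now lies in $\dom\af_0$, so the homomorphism property of $\af_0$ on $\dom\af_0$ together with its idempotence delivers the equation.

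The core case is $m=g\in G$. Its domain part is exactly (LC2'), so only the equation $\af_0(xyz)=\af_0(x\af_g(y)z)$ has to be proved. The same unit trick puts $x,z\in\dom\af_0$ (write $\wtl x=ux$, $\wtl z=zu$); splitting off the factor $\af_0(\wtl z)$ via the $\af_0$-homomorphism then reduces the goal to proving $\af_0(\wtl xy)=\af_0(\wtl x\af_g(y))$ for $\wtl x\in\dom\af_0$ and $y\in\dom\af_g$. Applying the already-established $m=0$ case twice --- with $y'=\wtl x$ and $z'\in\{y,\af_g(y)\}$, where (LC2') is what guarantees $\wtl x\af_g(y)\in\dom\af_0$ --- further reduces the task to showing $\af_0(\af_0(\wtl x)y)=\af_0(\af_0(\wtl x)\af_g(y))$. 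Here \cref{imaf0-is-invariant-by-all-m} gives $\af_0(\wtl x)\in\im\af_0\subseteq\dom\af_g$, so $\af_0(\wtl x)y\in\dom\af_0\cap\dom\af_g$; distributing $\af_g$ via its homomorphism property on $\dom\af_g$, together with the two specializations of \cref{AP3 set} (namely $\af_g\circ\af_0=\af_0$ on $\dom\af_0$ and $\af_0\circ\af_g=\af_0$ on $\af_g^{-1}(\dom\af_0)$), yields $\af_g(\af_0(\wtl x)y)=\af_0(\wtl x)\af_g(y)$ and $\af_0(\af_g(\af_0(\wtl x)y))=\af_0(\af_0(\wtl x)y)$, which closes the chain.

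The main obstacle is precisely this final reduction for $m=g$: (LC2') supplies only the domain inclusion, and there is no direct way to distribute $\af_g$ across $xy$ because $x\notin\dom\af_g$ in general. The workaround relies crucially on the unitality of $\dom\af_0$, which lets one replace $x$ by $\af_0(\wtl x)\in\im\af_0$; by \cref{imaf0-is-invariant-by-all-m} this element sits inside every $\dom\af_m$, finally unlocking the $\af_g$-homomorphism machinery.
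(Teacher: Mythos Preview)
Your argument is correct. The forward implication is indeed trivial, and for the converse your case analysis on $m\in\{e,0,g\}$ goes through: the $m=0$ case works exactly as you describe (the unit trick pushes all three factors into $\dom\af_0$, after which the homomorphism property and idempotence of $\af_0$ finish it), and in the $m=g$ case your bootstrap via the $m=0$ case, followed by the computation $\af_g(\af_0(\wtl x)\,y)=\af_0(\wtl x)\,\af_g(y)$ together with $\af_0\circ\af_g=\af_0$ on $\dom\af_0\cap\dom\af_g$, is valid.

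The paper takes a somewhat different, more uniform route. Rather than splitting into cases on $m$, it first establishes the single identity
\[
\af_0(\af_m(y)\,e)=\af_0(y\,e)\quad\text{for all }m\in M,
\]
where $e$ is the unit of $\dom\af_0$. The point is that $\af_0(e)$ is the unit of the ideal $\im\af_0$, so $\af_m(y)\,\af_0(e)=\af_m(y\,\af_0(e))=y\,\af_0(e)$ by \cref{imaf0-is-invariant-by-all-m}; sandwiching with $\af_0$ then gives the displayed identity. From here the paper writes $\af_0(x\af_m(y)z)=\af_0(xe)\,\af_0(\af_m(y)e)\,\af_0(ze)$ and applies the identity to replace $\af_m(y)$ by $y$ in one stroke, for all $m$ at once. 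Your approach uses the same ingredients (the unit, the ideal hypotheses, and the fact that $\af_m$ fixes $\im\af_0$) but assembles them through a two-stage reduction $m=0\rightsquigarrow m=g$; the paper's version is shorter and avoids the bootstrap, while yours makes the role of the $m=0$ case as a stepping stone more explicit.
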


\begin{proof}
    If $\af$ satisfies \cref{af0xyz-af0afgxafhyz}, then \cref{xyz-in-daf0-implies-xafmyz-in-daf0} is immediate.

    Conversely, assume $\af$ satisfies \cref{xyz-in-daf0-implies-xafmyz-in-daf0} and let $m \in M$, $y \in \dom\af_m$ and $x,z \in X^1$ be such that $xyz \in \dom\af_0$. Denote the unit of $\dom\af_0$ by $1_D$. Observe that $\af_0(1_D)$ is the unit of $\im\af_0$. Since $\im \af_0$ is an ideal of $X$ with identity $\af_0(1_D)$ and by \cref{imaf0-is-invariant-by-all-m}, we have
    \begin{align}\label{af0afmye-=-af0ye}
        \af_0(\af_m(y)1_D) &= \af_0(\af_m(y)1_D) \af_0(1_D) = \af_0(\af_m(y)1_D) \af_0(\af_0(1_D)) \notag\\
        &= \af_0(\af_m(y)1_D\af_0(1_D)) = \af_0(\af_m(y)\af_0(1_D)) \notag\\
        &= \af_m(y) \af_0(1_D) = \af_m(y) \af_m(\af_0(1_D)) \notag\\
        &= \af_m(y \af_0(1_D))= y\af_0(1_D) = \af_0(y \af_0(1_D)) = \af_0(y 1_D \af_0(1_D)) \notag\\
        &= \af_0(y1_D) \af_0(\af_0(1_D))
        = \af_0(y1_D) \af_0(1_D) = \af_0(y1_D). 
    \end{align}
    %By \cref{xyz-in-daf0-implies-xafmyz-in-daf0} we have $x \af_m(y)z \in \dom\af_0$ if $m \in G$, and by \cref{imaf0-is-invariant-by-all-m} we have $x \af_m(y)z \in \dom\af_0$ if $m=0$. Thus, by \cref{af0afmye-=-af0ye},
    We have $x \af_m(y)z \in \dom\af_0$ for all $m\in M$ (this holds by \cref{xyz-in-daf0-implies-xafmyz-in-daf0} for $m\in G$ and by \cref{imaf0-is-invariant-by-all-m} for $m=0$). Since $1_D$ is the unit of a two-sided ideal of $X$, it is a central idempotent, so
        \begin{align*}
        \af_0(x\af_m(y)z) &= \af_0(x\af_m(y)z 1_D) = \af_0(x1_D\af_m(y)1_Dz1_D) \\
        &= \af_0(x1_D)\af_0(\af_m(y)1_D)\af_0(z1_D) = \af_0(x1_D) \af_0(y1_D) \af_0(z1_D) 
        \\
        &= \af_0(x1_D y1_D z1_D) = \af_0(xyz 1_D) = \af_0(xyz),
    \end{align*}
    so \cref{af0xyz-af0afgxafhyz} follows.
\end{proof}

\begin{lema}\label{domafg-is-unital-then-af-satisfies-property}
    Suppose that $\dom\alpha_g$ is unital for each $g \in G$. Then $\af$ satisfies \cref{afg1-afg((xy)z)-group}.
\end{lema}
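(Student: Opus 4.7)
The plan is to exploit the identity $e$ of $\dom\af_g$ as a bookkeeping device that places every factor into the right domain. First, I would collect two standing facts about invertible $g\in G$: (a) since $g$ is invertible in $M$, the general strong partial action axioms give $\im\af_g=\dom\af_{g^{-1}}$, so $\af_g\colon\dom\af_g\to\dom\af_{g^{-1}}$ is a semigroup isomorphism with inverse $\af_{g^{-1}}$; (b) by hypothesis $\dom\af_g$ and $\im\af_g=\dom\af_{g^{-1}}$ are ideals of $X$, so $xe\in\dom\af_g$ and every product $u\af_g(y)v$ with $u,v\in X^1$ lies in $\dom\af_{g^{-1}}$.

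Next, I would decompose $xy$ as $(xe)y$, noting that $ey=y$ (since $y\in\dom\af_g$) and that both $xe$ and $y$ belong to $\dom\af_g$. Because $\af_g$ is a semigroup homomorphism on $\dom\af_g$, this gives $\af_g(xy)=\af_g(xe)\af_g(y)$, and hence
\begin{equation*}
\af_g(xy)z=\af_g(xe)\cdot\af_g(y)z.
\end{equation*}
By (b), both $\af_g(xe)$ and $\af_g(y)z$ lie in $\dom\af_{g^{-1}}$, which is an ideal, so the whole product lies in $\dom\af_{g^{-1}}$. Applying the semigroup homomorphism $\af_{g^{-1}}$ and using $\af_{g^{-1}}\circ\af_g=\id$ on $\dom\af_g$, I obtain
\begin{equation*}
\af_{g^{-1}}(\af_g(xy)z)=xe\cdot\af_{g^{-1}}(\af_g(y)z).
\end{equation*}

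Finally, I would observe that $\af_{g^{-1}}(\af_g(y)z)\in\im\af_{g^{-1}}=\dom\af_g$, so it is absorbed on the left by the identity $e$; consequently $xe\cdot\af_{g^{-1}}(\af_g(y)z)=x\cdot\af_{g^{-1}}(\af_g(y)z)$, which is exactly \cref{afg1-afg((xy)z)-group}. There is no real obstacle here: the only subtlety is making sure at each step that the element being fed to $\af_g$ or $\af_{g^{-1}}$ lies in the appropriate domain, which is precisely what unitality of $\dom\af_g$ (together with the ideal hypothesis) guarantees.
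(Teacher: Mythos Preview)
Your argument is correct. The key insertions of the identity $e$ of $\dom\af_g$ are exactly right: $xy=(xe)y$ because $ey=y$, and at the end $e\cdot\af_{g^{-1}}(\af_g(y)z)=\af_{g^{-1}}(\af_g(y)z)$ because $\im\af_{g^{-1}}=\dom\af_g$. Every application of $\af_g$ or $\af_{g^{-1}}$ is indeed to an element of the correct domain, thanks to the ideal and unitality hypotheses.

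The paper takes a different route: it does not carry out the computation at all, but simply observes that the restriction of $\af$ to $G$ is a partial group action in the sense of \cite{KN16} and then invokes \cite[Theorems~6.1 and~6.5]{KN16}, where condition \cref{afg1-afg((xy)z)-group} for unital ideals is already established. What you have written is essentially a self-contained version of the argument hidden behind that citation. Your approach has the advantage of making the paper independent of \cite{KN16} at this point and of showing the reader exactly how unitality is used; the paper's approach has the advantage of brevity and of emphasizing that \cref{afg1-afg((xy)z)-group} is purely a ``group-part'' phenomenon.
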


\begin{proof}
    % For each $g \in G$, denote by $1_{g^{-1}}$ the unit of $\dom\af_g$. Let $g \in G$, $y \in \dom\af_g$ and $x,z \in X^1$. Then
    % \begin{align*}
    %     \af_{g^{-1}}(\af_g(xy)z) &= \af_{g^{-1}}(\af_g(x y)1_{g}z) = \af_{g^{-1}}(\af_g(xy)) \af_{g^{-1}}(1_{g}z) \\
    %     &= x\af_{g^{-1}}(\af_g(y)) \af_{g^{-1}}(1_{g}z) = x\af_{g^{-1}}(\af_g(y) 1_{g} z) = x\af_{g^{-1}}(\af_g(y)z).
    % \end{align*}
    % Thus, \cref{afg1-afg((xy)z)-group} follows as desired.
    Observe that $\af$ induces a strong partial action of the monoid $G$ on $X$, which is a group partial action of $G$ on $X$ in the sense of \cite{KN16} by \cite[Remark 1.7]{khrypchenko2023partial}. Then \cref{afg1-afg((xy)z)-group} holds by \cite[Theorems 6.1 and 6.5]{KN16}. 
\end{proof}

\begin{teorema}\label{Gsqcup0-locally-confluent-equiv-unital-ideal}
    Assume that $M$ is the monoid $G^0$, where $G$ is a group. If $\dom\alpha_m$ and $\im\af_m$ are unital\footnote{In fact, it suffices to require $\dom\af_m$ to be a unital ideal and $\im\af_m$ just an ideal of $X$ (it will be automatically unital, because $\dom\af_m$ is).} ideals of $X$ for all $m \in M$, then $\af$ is globalizable if and only if it satisfies \cref{xyz-in-daf0-implies-xafmyz-in-daf0}.
\end{teorema}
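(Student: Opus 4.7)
The plan is to reduce everything to the globalizability criterion \cref{af-globalizable-iff}, which characterizes globalizability (under the ideal hypothesis on the $\dom\af_m$ and $\im\af_m$) as the conjunction of \cref{afg1-afg((xy)z)-group,af0xyz-af0afgxafhyz}. So the whole work consists in showing that, under the unitality assumption, \cref{xyz-in-daf0-implies-xafmyz-in-daf0} is equivalent to that conjunction.

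For the ``only if'' direction, assume $\af$ is globalizable. Then \cref{af-globalizable-iff} yields \cref{af0xyz-af0afgxafhyz}, and \cref{xyz-in-daf0-implies-xafmyz-in-daf0} is just the implication ``$xyz\in\dom\af_0\implies x\af_m(y)z\in\dom\af_0$'' restricted to $m=g\in G$, which is an immediate weakening of \cref{af0xyz-af0afgxafhyz}. No unitality is needed here.

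For the ``if'' direction, suppose $\af$ satisfies \cref{xyz-in-daf0-implies-xafmyz-in-daf0}. I would first invoke \cref{domafg-is-unital-then-af-satisfies-property} to get \cref{afg1-afg((xy)z)-group} for free from the unitality of the $\dom\af_g$, $g\in G$. Then I would invoke \cref{Gsqcup0-locally-confluent-equiv-unital-ideal-lema}, which, under the unitality of $\dom\af_0$, upgrades \cref{xyz-in-daf0-implies-xafmyz-in-daf0} to the full \cref{af0xyz-af0afgxafhyz}. Once both \cref{afg1-afg((xy)z)-group,af0xyz-af0afgxafhyz} are in hand, \cref{af-globalizable-iff} gives the globalizability of $\af$.

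There is no real obstacle, since the two lemmas doing the heavy lifting are already proved above; the statement is essentially a packaging of \cref{domafg-is-unital-then-af-satisfies-property,Gsqcup0-locally-confluent-equiv-unital-ideal-lema,af-globalizable-iff} into a single clean criterion for the unital case. The only point worth mentioning explicitly is the footnote: if one only assumes that $\dom\af_m$ is a unital ideal, then $\im\af_m=\af_m(\dom\af_m)$ is automatically a unital ideal with unit $\af_m(e_m)$, where $e_m$ denotes the identity of $\dom\af_m$; this ensures that the hypotheses of \cref{Gsqcup0-locally-confluent-equiv-unital-ideal-lema,domafg-is-unital-then-af-satisfies-property} are really met.
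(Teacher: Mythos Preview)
Your proposal is correct and follows exactly the same approach as the paper's proof, which simply cites \cref{Gsqcup0-locally-confluent-equiv-unital-ideal-lema,domafg-is-unital-then-af-satisfies-property,af-globalizable-iff}. Your unpacking of the two directions and your remark on the footnote are accurate elaborations of this one-line argument.
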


\begin{proof}
    The result follows from \cref{Gsqcup0-locally-confluent-equiv-unital-ideal-lema,domafg-is-unital-then-af-satisfies-property,af-globalizable-iff}.
\end{proof}

\begin{corolario}\label{pact-unit-ideals-|G|=1-glob}
        Let $M$ be the multiplicative monoid $\{0,1\}$. If $\dom\alpha_0$ and $\im \af_0$ are unital ideals of $X$, then $\af$ is globalizable.
\end{corolario}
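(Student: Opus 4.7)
My plan is to invoke \cref{af-globalizable-iff} with the group $G$ taken as the trivial group $\{1\}$, so that $M=G^0=\{0,1\}$. The hypothesis of \cref{af-globalizable-iff} asks that $\dom\af_m$ and $\im\af_m$ be ideals of $X$ for every $m\in M$: this holds since $\dom\af_0,\im\af_0$ are ideals by assumption, and $\dom\af_1=\im\af_1=X$ is trivially an ideal. It then remains to verify conditions \cref{afg1-afg((xy)z)-group,af0xyz-af0afgxafhyz}.

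Condition \cref{afg1-afg((xy)z)-group} is vacuous in this setting: the only $g\in G$ is the identity, so $\af_g=\id_X$ and the required equality collapses to $xyz=xyz$. For condition \cref{af0xyz-af0afgxafhyz} I would apply \cref{Gsqcup0-locally-confluent-equiv-unital-ideal-lema}, whose hypothesis that $\dom\af_0$ be unital is exactly what we have. That lemma reduces \cref{af0xyz-af0afgxafhyz} to \cref{xyz-in-daf0-implies-xafmyz-in-daf0}, and the latter is again trivial because $\af_g=\id_X$ for the unique $g\in G$, making $x\af_g(y)z=xyz$ so that the implication holds tautologically. With both conditions established, \cref{af-globalizable-iff} yields that $\af$ is globalizable.

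I do not foresee any real obstacle: the essence is that the triviality of $G$ renders both group-theoretic compatibilities \cref{afg1-afg((xy)z)-group,xyz-in-daf0-implies-xafmyz-in-daf0} vacuous, and the unitality of $\dom\af_0$ bridges the remaining $m=0$ case through \cref{Gsqcup0-locally-confluent-equiv-unital-ideal-lema}. It is worth noting that one should not cite \cref{Gsqcup0-locally-confluent-equiv-unital-ideal} directly here, since that theorem requires \emph{all} $\dom\af_m$ (in particular $\dom\af_1=X$) to be unital ideals, which is not assumed in the present statement; routing through \cref{af-globalizable-iff} together with \cref{Gsqcup0-locally-confluent-equiv-unital-ideal-lema} avoids this stronger hypothesis.
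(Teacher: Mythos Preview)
Your argument is correct. The paper's own proof simply cites \cref{Gsqcup0-locally-confluent-equiv-unital-ideal} and observes that \cref{xyz-in-daf0-implies-xafmyz-in-daf0} is automatically true for $G=\{1\}$; so at the level of substance both proofs are the same --- the triviality of $G$ kills \cref{afg1-afg((xy)z)-group} and \cref{xyz-in-daf0-implies-xafmyz-in-daf0}, and unitality of $\dom\af_0$ upgrades the latter to \cref{af0xyz-af0afgxafhyz} via \cref{Gsqcup0-locally-confluent-equiv-unital-ideal-lema}.

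Your route is, however, the more careful one, and your closing remark is well taken: the hypothesis of \cref{Gsqcup0-locally-confluent-equiv-unital-ideal} asks that $\dom\af_m$ be a unital ideal for \emph{every} $m\in M$, which for $m=e$ forces $X$ itself to be unital --- something the corollary does not assume. The paper's one-line citation thus leans implicitly on the fact that in the proof of \cref{Gsqcup0-locally-confluent-equiv-unital-ideal} the unitality of $\dom\af_g$ for $g\in G$ is used only through \cref{domafg-is-unital-then-af-satisfies-property} to secure \cref{afg1-afg((xy)z)-group}, which is vacuous here anyway. By going directly to \cref{af-globalizable-iff} and \cref{Gsqcup0-locally-confluent-equiv-unital-ideal-lema} you make this explicit and avoid the superfluous hypothesis.
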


\begin{proof}
    It follows from \cref{Gsqcup0-locally-confluent-equiv-unital-ideal} because \cref{xyz-in-daf0-implies-xafmyz-in-daf0} is automatically true in this case.
\end{proof}

% \begin{exemplo}
%     Let $G$ be the group $\{1,g,g^{-1}\}$, $M = G^0$ and $X$ the multiplicative semigroup $\Z^3$. Define a partial action $\af$ of $M$ on $X$ where
%     $$\dom\af_g = \Z \times \{0\} \times \Z, \ \ \ \im \af_g = \{0\} \times \Z \times \Z, \ \ \ \dom\af_0 = \{0\} \times \{0\} \times \Z$$
%     and $\af_g$ and $\af_0$ are given by 
%     $$\af_g((a,0,b)) = (0,a,b) \ \ \text{and} \ \ \af_0((0,0,a)) = (0,0,a).$$

%     Then by taking $m = g$, $x = (0,1,0) \in X$, $y = (1,0,0) \in \dom\af_m$ and $z = 1 \in X^1$ we have $xyz = (0,0,0) \in \dom\af_0$, but $x\af_m(y)z = x \not\in\dom\af_0$, so $\af$ does not satisfy \cref{xyz-in-daf0-implies-xafmyz-in-daf0}.
% \end{exemplo}

\begin{rem}
If $|G|>1$, then there may exist non-globalizable strong partial actions of $G^0$ on $X$ such that $\dom\alpha_m$ and $\im\af_m$ are unital ideals for all $m\in G^0$. See \cref{exemplo-non-globalizable-unitary}.
    
    % Let $G$ be the group $\{1,g\}$, $M = G^0$ and $X$ the multiplicative semigroup $\Z^3$. Define a partial action $\af$ of $M$ on $X$ where
    % $$\dom\af_g = X, \ \ \ \dom\af_0 = \{0\} \times \{0\} \times \Z$$
    % and $\af_g$ and $\af_0$ are given by 
    % $$\af_g((a,b,c)) = (b,a,c) \ \ \text{and} \ \ \af_0((0,0,a)) = (0,0,a).$$

    % Then by taking $m = g$, $x = (0,1,0) \in X$, $y = (1,0,0) \in \dom\af_m$ and $z = 1 \in X^1$ we have $xyz = (0,0,0) \in \dom\af_0$, but $x\af_m(y)z = x \not\in\dom\af_0$, so $\af$ does not satisfy \cref{xyz-in-daf0-implies-xafmyz-in-daf0}.
\end{rem}

\subsection{Partial actions of an arbitrary monoid on left zero semigroups}

Let $M$ be a monoid, $X$ a \hl{left zero semigroup}, that is, for all $x,y \in X$ we have $xy = x$, and $\af$ a strong partial action of $M$ on $X$.

Let $X_M$ be as in \cref{Y-set-theoretic-glob-def}, and endow it with the structure of a left zero semigroup. Any map between left zero semigroups is a semigroup homomorphism, in particular the maps $\beta_m$ and $\iota$ from \cref{Y-set-theoretic-glob-def}. As a consequence, we get the following. %Thus, $(\beta,\iota)$ is also a globalization of $\af$ in $\act{\Sem}$.

\begin{proposicao}\label{left-zero-semigroup-globalizable}
    The pair $(\beta,\iota)$ as above is a globalization of $\af$.
\end{proposicao}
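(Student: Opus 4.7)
The plan is to reduce the claim to the set-theoretic globalization already exhibited in \cref{Y-set-theoretic-glob-def}, exploiting the fact that left zero semigroups carry essentially no extra structure beyond that of sets. Concretely, if $Y$ is a left zero semigroup then every subset of $Y$ is a subsemigroup, and every map from any left zero semigroup to $Y$ is a semigroup homomorphism. This observation has two immediate consequences worth making explicit before the proof: first, a strong partial action on the left zero semigroup $X$ is nothing but a strong partial action on its underlying set; second, once we endow $X_M$ with the left zero multiplication (which is legitimate since $X_M$ is just a set at this point), every set map into or out of $X_M$ becomes a homomorphism automatically.

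With this setup, I would proceed as follows. First, endow $X_M$ from \cref{Y-set-theoretic-glob-def} with the left zero semigroup structure. Then each $\beta_m$, defined on the set level by \cref{beta-def}, is automatically a homomorphism of the left zero semigroup $X_M$, so $\beta = \{\beta_m\}_{m \in M}$ is a global action of $M$ on the semigroup $X_M$, i.e.\ $\beta \in \act{\Sem}$. Likewise, the map $\iota : X \to X_M$ from \cref{iota-def} is automatically a semigroup homomorphism, and the intertwining identity $\beta_m(\iota(x)) = \iota(\af_m(x))$ for $x \in \dom\af_m$ was already verified in \cref{Y-set-theoretic-glob-def}, so $\iota$ is a morphism from $\af$ to $\beta$ in $\act{\Sem}$.

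Finally, the defining property of a globalization in \cref{globalization-def}, namely that $\beta_m(\iota(x)) = \iota(y)$ forces $x \in \dom\af_m$ and $\af_m(x) = y$, is a purely set-theoretic condition that has already been established for $(\beta,\iota)$ in \cref{Y-set-theoretic-glob-def} when viewed in $\act{\Set}$. Since this condition transfers verbatim to $\act{\Sem}$, we conclude that $(\beta,\iota)$ is a globalization of $\af$ in $\act{\Sem}$. There is essentially no obstacle here: the whole point is that in the left zero case the semigroup layer adds no new compatibility conditions, so the set-theoretic construction of \cref{Y-set-theoretic-glob-def} can be promoted to $\act{\Sem}$ for free.
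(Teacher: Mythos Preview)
Your proposal is correct and follows essentially the same approach as the paper: the paper notes in the paragraph preceding the proposition that any map between left zero semigroups is a semigroup homomorphism, so the set-theoretic globalization $(\beta,\iota)$ of \cref{Y-set-theoretic-glob-def} becomes a globalization in $\act{\Sem}$ for free. Your argument is simply a more detailed unpacking of this same observation.
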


% \begin{proof}
%     Let $X_M$ be as in \cref{Y-set-theoretic-glob-def}, and endow it with the structure of a left zero semigroup. Any map between left zero semigroups is a semigroup homomorphism, in particular the maps $\beta_m$ and $\iota$ from \cref{Y-set-theoretic-glob-def}. Thus, $(\beta,\iota)$ is also a globalization of $\af$ in $\act{\Sem}$.
%     %
%     %Let $m \in M$ and $x,y \in X$ with $[m,x] \overset{*}{\leftrightarrow} [e,y]$. Then $[m,x] = [e,y]$ by \cref{arrow-preserves-first-letter-left-zeros-semigroup}. The result then follows from \cref{unique-normal-form-implies-glob}.
% \end{proof}

% Let $X_M$ be as in \cref{Y-set-theoretic-glob-def}, and endow it with the structure of a left zero semigroup.

% Notice that any map between left zero semigroups is a semigroup homomorphism, in particular the maps $\beta_m$ and $\iota$ from \cref{Y-set-theoretic-glob-def}. Thus, $(\beta,\iota)$ is also a globalization of $\af$ in $\act{\Sem}$.

%Notice that any map between left zero semigroups is a semigroup homomorphism. Hence, if $(\gamma,\kappa)$ is the set-theoretical universal globalization of $\af$, where $\gamma_m : Y \to Y$ is given by $\gamma_m([n,x]) = [mn,x]$ and $\kappa : X \to Y$ is given by $\kappa(x) = [e,x]$, then $\gamma$ is a global action of $M$ on the semigroup $Y$ and $\kappa$ is a semigroup homomorphism from $X$ to $Y$. Thus, $(\gamma,\kappa)$ is not only a globalization of $\af$ as a partial action on a set but also as a partial action on a semigroup.

However, observe that $(\beta,\iota)$ from \cref{left-zero-semigroup-globalizable} is not necessarily a universal globalization of $\af$ in $\act{\Sem}$, as illustrated by the following example.

\begin{exemplo}
    Let $M$ be the multiplicative monoid $\{0,1\}$ and $X$ be a left zero semigroup. Define a strong partial action $\af$ of $M$ on $X$ by $\dom\af_0 = \emptyset$. Let $\to$ be the rewriting system on $X_M^+$ from \cref{definicao-rewriting-system-arrow-on-SY}. Since \cref{af0xyz-=-xaf0yz} is trivially satisfied, $(X_M^+,\to)$ is locally confluent and $\af$ is globalizable by \cref{spa-01-on-semigroup-globalizable-equiv}. By \cref{globalizable-iff-reflection-is-globalization} the reflection $(\ol\beta,\ol\iota)$ from \cref{reflexao-em-semigrupos-eh-quociente-da-reflexao-em-set} is a universal globalization of $\af$.

    Let $x = [0,a]$ and $y = [1,b]$ in $X_M^+$, for some $a,b \in X$. By \cref{equiv-classes-of-Gsqcup0}, for all $c \in X$ we have $[0,a] \neq [1,c]$ and $[1,b] \neq [0,c]$, so $[0,a][1,b]$ is in normal form in $(X_M^+,\to)$. Then, by \cref{loc-conf-implies-unique-normal-form,Rsharp-=-leftrightarrowstar}, it follows that the $R^\#$-classes of $xy=[0,a][1,b]$ and $x=[0,a]$ are distinct elements of $X_M^+/R^\#$. Hence, $X_M^+/R^\#$ is not a left zero semigroup and, thus, is not isomorphic to $X_M$ in $\Sem$.
\end{exemplo}

Unlike the globalizability of $\af$, the question of whether $(X_M^+,\to)$ is locally confluent is not that easy. The following proposition answers this question.

% \begin{lema}\label{arrow-preserves-first-letter-left-zeros-semigroup}
%     Let $w \overset{*}{\leftrightarrow} w'$. If $w = [m,x] v$ for some $v \in X_M^*$, $m \in M$ and $x \in X$, then there exists $v' \in X_M^*$ such that $w' = [m,x] v'$.
% \end{lema}

% \begin{proof}
%     Let $w = [m,x] v$ for some $v \in X_M^*$, $m \in M$ and $x \in X$. First, assume $w \to w'$. Then we have one of the following cases.

%     \textit{Case 1}. The word $w'$ is obtained by applying $\to$ to $v$. Then $w' = [m,x] v'$ for some $v' \in X_M^+$ such that $v \to v'$.

%     \textit{Case 2}. We have $v = [m',y]v'$ for some $m' \in M$, $y \in X$ and $v' \in X_M^*$, and $w' = [m',x'y] v'$, where $[m,x] = [m',x']$. Then
%     $$[m',x'y] = [m',x'] = [m,x],$$
%     so $w' = [m,x] v'$.

%     Now, assume $w' \to w$. Then we have one of the following cases.

%     \textit{Case 1}. There exists $v' \in X_M^+$ such that $v' \to v$ and $w' = [m,x]v'$.

%     \textit{Case 2}. We have $w' = [m',x'][m',x'']v$ for some $m' \in M$ and $x',x'' \in X$ such that $[m',x'x''] = [m,x]$. Then
%     $$[m,x] = [m',x'x''] = [m',x'],$$
%     so by taking $v' = [m',x'']v$ we have $w' = [m,x] v'$.

%     The result then follows by transitivity.
% \end{proof}

Given $m \in M$, we denote
$$[m,X] =\{[m,x] : x \in X\} \subseteq X_M.$$

\begin{proposicao}\label{loc-conf-iff-left-zero-semigroup}
    %The rewriting system $(X_M^+,\to)$ is locally confluent if and only if for all $m,n \in M$ and $x,y,y',z \in X$ with $[m,y] = [n,y']$, there exist $k \in M$ and $x',z' \in X$ such that $[m,x] = [k,x']$ and $[n,z] = [k,z']$.

    The rewriting system $(X_M^+,\to)$ is locally confluent if and only if for all $m,n \in M$ we have
    \begin{align}\label{left-zeros-semigroup-locally-confluent-formula}
        [m,X] \cap [n,X] \neq \emptyset\impl \forall u \in [m,X]\ \forall v \in [n,X]\ \exists k \in M:\ u,v \in [k,X].
    \end{align}
\end{proposicao}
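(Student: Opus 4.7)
The plan is to invoke \cref{locally-confluent-iff-2-3-letter-words-are-locally-confluent} and reduce the question to local confluence of length-2 and length-3 words, exploiting the fact that in a left zero semigroup $[m,xy]=[m,x]$ for all $m\in M$ and $x,y\in X$. The key observation is that every reducible length-2 word $AB\in X_M^+$ reduces uniquely to its first letter: if $A=[m,\xi_1]$ and $B=[m,\xi_2]$, then $AB\to[m,\xi_1\xi_2]=[m,\xi_1]=A$, independently of the chosen representation. Consequently, length-2 words are trivially locally confluent, and length-2 words $AC$ are joinable with the single letter $A$ exactly when $AC$ is reducible, that is, when $A,C\in[k,X]$ for some $k\in M$.

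For a length-3 word $w=ABC\in X_M^+$, the only non-trivial situation is when both reductions are applicable, which requires $A,B\in[m,X]$ and $B,C\in[n,X]$ for some $m,n\in M$; in particular $B\in[m,X]\cap[n,X]\neq\emptyset$. Using the length-2 computation above, the two reducts of $w$ are $AC$ (reducing positions $1$--$2$) and $AB$ (reducing positions $2$--$3$). Since $AB$ is reducible and collapses to the single letter $A$, joinability of the two reducts is equivalent to $AC$ also reducing to $A$, which by the observation above is equivalent to the existence of $k\in M$ with $A,C\in[k,X]$.

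For the ``if'' direction, I would assume \cref{left-zeros-semigroup-locally-confluent-formula} and, in the setup of the previous paragraph, apply it to $A\in[m,X]$ and $C\in[n,X]$ (with $[m,X]\cap[n,X]\neq\emptyset$) to produce the required $k$, whence $AC\overset{*}{\to}A\overset{*}{\leftarrow}AB$ and $w$ is locally confluent. For the ``only if'' direction, given $m,n\in M$ with $[m,X]\cap[n,X]\neq\emptyset$, pick any witness $B\in[m,X]\cap[n,X]$ together with arbitrary $u\in[m,X]$ and $v\in[n,X]$, and form the length-3 word $w=uBv$; both reductions at positions $1$--$2$ and $2$--$3$ apply, so local confluence of $(X_M^+,\to)$ forces $uv$ to be reducible, which yields the required $k\in M$ with $u,v\in[k,X]$.

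The proof is essentially bookkeeping: the only subtlety is keeping representations of elements of $X_M$ straight (since a single class may admit many first coordinates), which is handled by the invariance of the length-2 reduct under the choice of representation. No genuine obstacle is expected beyond this.
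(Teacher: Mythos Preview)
Your proposal is correct and follows essentially the same route as the paper: both reduce via \cref{locally-confluent-iff-2-3-letter-words-are-locally-confluent} to length-$2$ and length-$3$ words, observe that length-$2$ words are automatically locally confluent because any reduction collapses to the first letter, and then analyze the length-$3$ case by comparing the two reducts $AC$ and $AB$. Your explicit upfront observation that every reducible length-$2$ word reduces (uniquely) to its first letter lets you handle the ``only if'' direction without the paper's case split on whether $w_1=[m,x][n,z]$ is in normal form: since $uB$ is always reducible, joinability of $uv$ and $uB$ forces $uv$ to be reducible in either case, and you get $k$ directly.
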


\begin{proof}
    We first prove that the words of length $2$ in $X_M^+$ are always locally confluent. Let $w_1 \leftarrow w \to w_2$ with $|w| = 2$. Then there exist $m,n \in M$ and $x,x',y,y' \in X$ with $w = [m,x][m,y] = [n,x'][n,y']$, $w_1 = [m,xy]$ and $w_2 = [n,x'y']$, so
    $$w_1 = [m,xy] = [m,x] = [n,x'] = [n,x'y'] = w_2.$$
    Thus, $w$ is locally confluent. Therefore, by \cref{locally-confluent-iff-2-3-letter-words-are-locally-confluent}, $(X_M^+,\to)$ is locally confluent if and only if the words of length $3$ in $X_M^+$ are locally confluent.

    \textit{The ``if'' part}. Assume \cref{left-zeros-semigroup-locally-confluent-formula} is true for all $m,n \in M$. Let $w_1 \leftarrow w \to w_2$ with $|w| = 3$. Without loss of generality, we can assume that there exist $m,n \in M$ and $x,y,y',z \in X$ with $w = [m,x][m,y][n,z] = [m,x][n,y'][n,z]$, $w_1 = [m,xy][n,z]$ and $w_2 = [m,x][n,y'z]$.

 % for all $m,n \in M$ and $x,y,z \in X$ with $[m,y] = [n,y']$, there exist $k \in M$ and $x',z' \in X$ such that $[m,x] = [k,x']$ and $[n,z] = [k,z']$.

    Since $[m,y] = [n,y'] \in [m,X] \cap [n,X]$, $[m,x] \in [m,X]$ and $[n,z] \in [n,X]$, by \cref{left-zeros-semigroup-locally-confluent-formula} there exists $k \in M$ such that $[m,x],[n,z] \in [k,X]$, so there exist $x',z' \in X$ such that $[m,x] = [k,x']$ and $[n,z] = [k,z']$. Then
    \begin{align*}
    w_1 &= [m,xy][n,z] = [m,x][n,z] = [k,x'][k,z'] \to [k,x'z'] = [k,x'] \\
    &= [m,x] = [m,xy] \leftarrow [m,x][m,y] = [m,x][n,y'] = [m,x][n,y'z] = w_2,        
    \end{align*}
    so $w_1 \downarrow w_2$. Thus, $w$ is locally confluent.

    \textit{The ``only if'' part}. Suppose that $(X_M^+,\to)$ is locally confluent. Let $m,n \in M$ be such that $[m,X] \cap [n,X] \neq \emptyset$, and take $[m,x] \in [m,X]$ and $[n,z] \in [n,X]$.

    Let $[m,y] = [n,y'] \in [m,X] \cap [n,X]$ and consider $w = [m,x][m,y][n,z]$, $w_1 = [m,x][n,z]$, and $w_2 = [m,x][n,y']$, so $w_1 \leftarrow w \to w_2$. By assumption, $w$ is locally confluent, so $w_1 \downarrow w_2$. We now separate into cases.

    %Case 1. Assume $w_1 \neq w_2$. Since $[m,x]$ is the only reduct of $w_2$, it follows that $w_1 \to [m,x]$. So, there exist $k \in M$ and $x',z' \in X$ such that $w_1 = [k,x'][k,z']$ and $[m,x] = [k,x'z'] = [k,x']$. From $w_1 = [k,x'][k,z']$ we get $[m,x] = [k,x']$ and $[n,z] = [k,z']$, as desired.
    %\textit{Case 1}. $w_1 \neq w_2$. Then $w_1$ is reducible (otherwise $w_2\overset{*}{\to}w_1$, which is impossible, because $w_1\ne w_2$ and $|w_1|=|w_2|$), so there exist $k \in M$ and $x',z' \in X$ such that $w_1 = [k,x'][k,z']$, whence $[m,x] = [k,x'] \in [k,X]$ and $[n,z] = [k,z'] \in [k,X]$.
    \textit{Case 1}. $w_1$ is in normal form. Then $w_2\overset{*}{\to}w_1$, so $w_1 = w_2$ because $|w_1|=|w_2|$. Thus, $[n,z] = [m,y]$ so $[m,x],[n,z] \in [k,X]$, where $k=m$.
    
    \textit{Case 2}. $w_1$ is reducible. Then there exist $k \in M$ and $x',z' \in X$ such that $w_1 = [k,x'][k,z']$, whence $[m,x] = [k,x'] \in [k,X]$ and $[n,z] = [k,z'] \in [k,X]$.
\end{proof}

% \begin{exemplo}
%     Let $M = G^0$ with $G$ a group, $X$ a semigroup of left zeros and $\af$ a partial action of $M$ on $X$ such that $\dom\af_0 \neq \emptyset, X$.

%     Let $x \in X \setminus \dom\af_0$ and $y \in \im\af_0$. Then we have $[0,y] = [e,y] \in [0,X] \cap [e,X]$. However, $[0,x] \in [0,X]$ and $[e,x] \in [e,X]$, but, by \cref{equiv-classes-of-Gsqcup0}, there is no $k \in M$ such that $[0,x], [e,x] \in [k,X]$. Hence, $\af$ does not satisfy \cref{left-zeros-semigroup-locally-confluent-formula}.
    
%     % Let $x \in X \setminus \dom\af_0$, $y \in \im\af_0$ and $z \in X \setminus \im\af_0$ and consider $w = [0,x][0,y][e,z]$. Then, by \cref{equiv-classes-of-Gsqcup0},
%     % $$w \to [0,xy][e,z] = [0,x][e,z]$$
%     % and
%     % $$w = [0,x][e,y][e,z] \to [0,x][e,yz] = [0,x][e,y].$$
%     % Now, since $z \not\in\im\af_0$ and $y \in \im\af_0$ we have $[e,z] \neq [e,y]$, so $[0,x][e,z] \neq [0,x][e,y]$. Moreover, by \cref{equiv-classes-of-Gsqcup0}, $[0,x][e,z]$ has no reducts. Hence, $w$ is not locally confluent.
% \end{exemplo}

\begin{exemplo}\label{exm-X-left-zero-non-loc-confl}
    Let $M$ be the group $\{e,g\}$, $X$ a left zero semigroup and $\alpha$ a strong partial action of $M$ on $X$ with $\dom\af_g \neq \emptyset, X$.

    Let $x \in \dom\af_g$ and $y \in X \setminus \dom\af_g$. Then $[g,x] = [e,\af_g(x)] \in [e,X] \cap [g,X]$. However, for $[e,y] \in [e,X]$ and $[g,y] \in [g,X]$ there is no $k \in M$ such that $[e,y], [g,y] \in [k,X]$, since $[e,y] = \{(e,y)\}$ and $[g,y] = \{(g,y)\}$. Hence, $\af$ does not satisfy \cref{left-zeros-semigroup-locally-confluent-formula}, and consequently $(X_M^+,\to)$ is not locally confluent by \cref{loc-conf-iff-left-zero-semigroup}.
\end{exemplo}

% Se $M = G$, vale?

% Não: tome qualquer ação em que existe $g \in G$ tal que $\dom\af_g \neq \emptyset$, mas tal que $\bigcup_{g \neq e} \dom\af_g \neq X$ (por exemplo, a ação de $\{e,g\}$ em $X$ um conjunto não vazio qualquer, onde $\emptyset \neq \dom\af_g \subsetneq X$ e $\af_g = id_{\dom\af_g}$. Assim, se $x \in \dom\af_g$ e $y \in X \setminus \bigcup_{g \neq e} \dom\af_g$, então $[g,x] = [e,\af_g(x)] \in [e,X] \cap [g,X]$, mas $[e,y] \in [e,X]$ e $[g,y] \in [g,X]$ são tais que não existe o $k$ desejado, pois $[e,y] = \{(e,y)\}$ e $[g,y] = \{(g,y)\}$.

%Existe exemplo de alguma ação não global que satisfaz? Domínios vazios, funciona por vacuidade. Procurar algum exemplo fora dos dois extremos (talvez quase todos os domínios globais, e alguns vazios?) (tenha algum domínio não trivial e que ainda satisfaz a propriedade)

\begin{exemplo}\label{exm-X-left-zero-loc-confl}
    Let $G$ be the group $\{e,g\}$, $M = G^0$ and $X$ a left zero semigroup. Let $\af$ be a strong partial action of $M$ on $X$ with $\dom\af_g = X$ and $\dom\af_0 = \emptyset$.

    Then, by \cref{equiv-classes-of-Gsqcup0}, $[0,X] \cap [g,X] = \emptyset$ and $[g,X] = [h,X]$ for all $g,h \in G$, so $\af$ satisfies \cref{left-zeros-semigroup-locally-confluent-formula}, and consequently $(X_M^+,\to)$ is locally confluent by \cref{loc-conf-iff-left-zero-semigroup}.
\end{exemplo}

%(ações parciais de $G^0$ satisfazem automaticamente no caso de ideiais, mas os ideais de um semigrupo de zeros à esquerda são sempre ou o vazio ou o todo)

% \begin{exemplo}
%     Let $G$ be the group $\{e,g,g^2\}$, $A$ a set with $|A| > 1$, $X = A \times A$ endowed with the structure of a left zero semigroup and fix $a \in A$. Define a strong partial action $\af$ of $G$ on $X$ where $\af_g : A \times \{a\} \to \{a\} \times A$ is given by $\af_g(b,a) = (a,b)$ and $\af_{g^2} = \af_g^{-1}$.

%     Then $[g,(a,a)] = [g^{-1},(a,a)] \in [g,X] \cap [g^2,X]$. However, if $b \neq a$ we have $u \coloneqq [g,(b,b)] \in [g,X]$ and $v \coloneqq [g^2,(b,b)] \in [g^2,X]$, but there is no $k \in G$ such that $u,v \in [k,X]$, since $u = \{(g,(b,b))\}$ and $v = \{(g^2,(b,b))\}$, so $\af$ does not satisfy \cref{left-zeros-semigroup-locally-confluent-formula}.
% \end{exemplo}

\begin{rem}
    The results in this section are also valid for right zero semigroups.
\end{rem}

\subsection{Partial actions of an arbitrary monoid on null semigroups}

Let $M$ be a monoid, $X$ a \hl{null semigroup}, that is, there exists an element $0 \in X$ such that for all $x,y \in X$ we have $xy = 0$, and $\af$ a strong partial action of $M$ on $X$.

\begin{lema}\label{zero-product-mxny-implies-m0n0}
    Let $m,n \in M$ and $x,y \in X$. If $(m,x) \leftrightharpoonup (n,y)$, then $(m,0) \leftrightharpoonup (n,0)$.
\end{lema}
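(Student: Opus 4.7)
The plan is to unfold the definition of $\leftrightharpoonup$ and show that the same monoid element $k$ that witnesses $(m,x)\rightharpoonup(n,y)$ also witnesses $(m,0)\rightharpoonup(n,0)$. By the symmetric closure it suffices to treat the case $(m,x)\rightharpoonup(n,y)$, so there exists $k\in M$ such that $m=nk$, $x\in\dom\af_k$ and $\af_k(x)=y$.

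The key observation is that $\dom\af_k$ is a subsemigroup of $X$ and $\af_k$ is a semigroup homomorphism (by the definition of a strong partial action of $M$ on a semigroup). Therefore, from $x\in\dom\af_k$ we get $xx\in\dom\af_k$; but $X$ is a null semigroup, so $xx=0$, yielding $0\in\dom\af_k$. Applying the homomorphism property,
\[
\af_k(0)=\af_k(xx)=\af_k(x)\af_k(x)=yy=0,
\]
again using that every product in $X$ equals $0$.

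Hence $m=nk$, $0\in\dom\af_k$ and $\af_k(0)=0$, which is exactly the condition for $(m,0)\rightharpoonup(n,0)$. By symmetry, if instead $(m,x)\leftharpoonup(n,y)$, the same argument with the roles of $(m,x)$ and $(n,y)$ swapped gives $(m,0)\leftharpoonup(n,0)$. In either case $(m,0)\leftrightharpoonup(n,0)$, as desired.

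There is no real obstacle: the proof is essentially a one-line reduction using only that $\dom\af_k$ is a subsemigroup and $\af_k$ is a semigroup morphism, combined with the defining identity $xy=0$ of a null semigroup. The mild subtlety is just to notice that the existence of \emph{some} $x\in\dom\af_k$ is automatically enough to force $0\in\dom\af_k$, so nothing extra about the partial action needs to be assumed.
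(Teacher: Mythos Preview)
Your proof is correct and is essentially identical to the paper's: both unfold $(m,x)\rightharpoonup(n,y)$ to obtain $k$ with $m=nk$, use that $\dom\af_k$ is a subsemigroup to get $0=xx\in\dom\af_k$, compute $\af_k(0)=\af_k(x)\af_k(x)=yy=0$, and then invoke symmetry for the $\leftharpoonup$ case.
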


\begin{proof}
    If $(m,x) \rightharpoonup (n,y)$, then there exists $k \in M$ such that $m = nk$, $x \in \dom\af_k$ and $y = \af_k(x)$. Since $\dom\af_k$ is a subsemigroup of $X$ we have $0 = x x \in \dom\af_k$. Moreover,
    $$\af_k(0) = \af_k(x x) = \af_k(x)\af_k(x) = yy = 0.$$
    Thus, $(m,0) \rightharpoonup (n,0)$.

    Similarly, if $(m,x) \leftharpoonup (n,y)$ we have $(m,0) \leftharpoonup (n,0)$.
\end{proof}

\begin{lema}\label{zero-product-mX-cap-nX-nonempty-implies-m0=n0}
    Let $m,n \in M$. If $[m,X] \cap [n,X] \neq \emptyset$, then $[m,0] = [n,0]$.
\end{lema}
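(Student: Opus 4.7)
The proof should be a direct application of \cref{zero-product-mxny-implies-m0n0}. The plan is as follows.

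Suppose $[m,X] \cap [n,X] \neq \emptyset$, so there exist $x,y \in X$ with $[m,x] = [n,y]$. Unfolding the definition of $\approx$ as the equivalence generated by $\leftrightharpoonup$, I get a chain
\begin{align*}
(m,x) = (m_1,x_1) \leftrightharpoonup (m_2,x_2) \leftrightharpoonup \dots \leftrightharpoonup (m_k,x_k) = (n,y)
\end{align*}
in $M \times X$, with $m_1 = m$ and $m_k = n$.

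Next, I apply \cref{zero-product-mxny-implies-m0n0} to each adjacent pair, obtaining
\begin{align*}
(m,0) = (m_1,0) \leftrightharpoonup (m_2,0) \leftrightharpoonup \dots \leftrightharpoonup (m_k,0) = (n,0).
\end{align*}
By transitivity of $\approx$, this gives $(m,0) \approx (n,0)$, that is, $[m,0] = [n,0]$, as desired.

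No real obstacle is expected here: the heavy lifting has already been done in \cref{zero-product-mxny-implies-m0n0}, which handles a single $\leftrightharpoonup$-step, and the present lemma is just the induction on the length of the equivalence chain.
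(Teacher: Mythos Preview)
Your proposal is correct and matches the paper's proof essentially verbatim: pick $[m,x]=[n,y]$, write out the $\leftrightharpoonup$-chain witnessing this, apply \cref{zero-product-mxny-implies-m0n0} to each step to get a $\leftrightharpoonup$-chain from $(m,0)$ to $(n,0)$, and conclude $[m,0]=[n,0]$.
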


\begin{proof}
    Let $[m,x] = [n,y] \in [m,X] \cap [n,X]$, so there exists a sequence
    $$(m,x) = (m_1,x_1) \leftrightharpoonup \cdots \leftrightharpoonup (m_k,x_k) = (n,y).$$
    Then by \cref{zero-product-mxny-implies-m0n0} we get a sequence
    $$(m,0) = (m_1,0) \leftrightharpoonup \cdots \leftrightharpoonup (m_k,0) = (n,0),$$
    and it follows that $[m,0] = [n,0]$, as desired.   
\end{proof}

\begin{proposicao}\label{null-semigroup-loc-conf}
    Let $M$ be a monoid, $X$ a null semigroup and $\af$ a strong partial action of $M$ on $X$. Then $(X_M^+,\to)$ is locally confluent.
\end{proposicao}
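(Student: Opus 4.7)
The plan is to use \cref{locally-confluent-iff-2-3-letter-words-are-locally-confluent}, reducing the problem to verifying local confluence only for words of length $2$ and $3$, and then to exploit the null semigroup structure together with \cref{zero-product-mX-cap-nX-nonempty-implies-m0=n0} to collapse everything to the class $[m,0]$.

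First I would handle words of length $2$. Suppose $w = [m,x][m,y] = [n,x'][n,y']$ with $w_1 = [m,xy] \leftarrow w \to [n,x'y'] = w_2$. Since $X$ is null, $xy = 0 = x'y'$, so $w_1 = [m,0]$ and $w_2 = [n,0]$. The letters $[m,y] = [n,y']$ witness that $[m,X]\cap[n,X]\neq\emptyset$, so \cref{zero-product-mX-cap-nX-nonempty-implies-m0=n0} yields $[m,0] = [n,0]$, hence $w_1 = w_2$.

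Next I would handle words of length $3$. Without loss of generality, write $w = [m,x][m,y][n,z] = [m,x][n,y'][n,z]$ with $w_1 = [m,xy][n,z]$ and $w_2 = [m,x][n,y'z]$. Nullness of $X$ gives $xy = 0$ and $y'z = 0$, so $w_1 = [m,0][n,z]$ and $w_2 = [m,x][n,0]$. The equality $[m,y] = [n,y']$ again forces $[m,X]\cap[n,X]\neq\emptyset$, so by \cref{zero-product-mX-cap-nX-nonempty-implies-m0=n0} we have $[m,0] = [n,0]$. Therefore
\begin{align*}
w_1 &= [n,0][n,z] \to [n,0\cdot z] = [n,0],\\
w_2 &= [m,x][m,0] \to [m,x\cdot 0] = [m,0] = [n,0],
\end{align*}
so $w_1\downarrow w_2$ (in fact to a common reduct).

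There is no genuine obstacle here; the only mild subtlety is noticing that in the length-$3$ case a single rewriting step is not quite enough, and one must use $[m,0]=[n,0]$ to align the letters of $w_1$ (resp.\ $w_2$) in order to perform the second reduction. Applying \cref{locally-confluent-iff-2-3-letter-words-are-locally-confluent} then finishes the proof.
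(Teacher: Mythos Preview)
Your proof is correct and follows essentially the same approach as the paper: reduce to words of length $2$ and $3$ via \cref{locally-confluent-iff-2-3-letter-words-are-locally-confluent}, use nullness to collapse products to $0$, and invoke \cref{zero-product-mX-cap-nX-nonempty-implies-m0=n0} from the overlapping middle letters to identify $[m,0]$ with $[n,0]$. The only cosmetic difference is that in the length-$2$ case the paper uses $[m,x]=[n,x']$ rather than $[m,y]=[n,y']$ as the witness for $[m,X]\cap[n,X]\neq\emptyset$, which is immaterial.
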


\begin{proof}
    We show that words of length $2$ or $3$ in $(X_M^+,\to)$ are locally confluent and apply \cref{locally-confluent-iff-2-3-letter-words-are-locally-confluent}. Let $w_1 \leftarrow w \to w_2$.

    \textit{Case 1}. $|w| = 2$. Then there exist $m,n \in M$ and $x,x',y,y'\in X$ with $w = [m,x][m,y] = [n,x'][n,y']$, $w_1 = [m,xy]$ and $w_2 = [n,x'y']$. Since $[m,x] = [n,x'] \in [m,X] \cap [n,X]$, by \cref{zero-product-mX-cap-nX-nonempty-implies-m0=n0} we have $[m,0] = [n,0]$. Thus, $w_1 = w_2$, so $w$ is locally confluent.

    \textit{Case 2}. $|w| = 3$. Then, without loss of generality, we can assume that there exist $m,n \in M$ and $x,y,y',z \in X$ with $w = [m,x][m,y][n,z] = [m,x][n,y'][n,z]$, $w_1 = [m,xy][n,z]$ and $w_2 = [m,x][n,y'z]$.
    %$$[m,0][n,z] = [m,xy][n,z] \leftarrow w \to [m,x][n,y'z] = [m,x][n,0].$$
    Since $[m,y] = [n,y'] \in [m,X]\cap[n,X]$, by \cref{zero-product-mX-cap-nX-nonempty-implies-m0=n0} we have $[m,0] = [n,0]$. Thus, $w$ is locally confluent because
    $$w_1 = [m,0][n,z] = [n,0][n,z] \to [n,0] = [m,0] \leftarrow [m,x][m,0] = [m,x][n,0] = w_2.$$
    %Therefore, words of length $2$ or $3$ are locally confluent in $(X_M^+,\to)$, so the result follows by \cref{locally-confluent-iff-2-3-letter-words-are-locally-confluent}.
\end{proof}

\begin{corolario}\label{null-semigroup-globalizable}
    Let $M$ be a monoid, $X$ a null semigroup and $\af$ a strong partial action of $M$ on $X$. Then $\af$ is globalizable.
\end{corolario}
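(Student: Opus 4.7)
The plan is to observe that this corollary is an immediate consequence of the two previous results. The preceding \cref{null-semigroup-loc-conf} establishes that for any monoid $M$, any null semigroup $X$, and any strong partial action $\af$ of $M$ on $X$, the associated rewriting system $(X_M^+,\to)$ is locally confluent. On the other hand, \cref{locally-confluent-implies-glob} provides the general implication that local confluence of $(X_M^+,\to)$ is a sufficient condition for $\af$ to be globalizable (which in turn rests on \cref{loc-conf-implies-unique-normal-form} and \cref{unique-normal-form-implies-glob}, with termination coming from \cref{to-is-terminating}).

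Thus the proof is a one-line combination: apply \cref{locally-confluent-implies-glob} to the conclusion of \cref{null-semigroup-loc-conf}. There is no real obstacle here, since all the substantive work has already been done in \cref{null-semigroup-loc-conf}, where \cref{zero-product-mxny-implies-m0n0} and \cref{zero-product-mX-cap-nX-nonempty-implies-m0=n0} were used to verify local confluence of words of length $2$ and $3$, and then \cref{locally-confluent-iff-2-3-letter-words-are-locally-confluent} lifted this to all of $X_M^+$. So the corollary requires no additional argument beyond citing the two relevant results.
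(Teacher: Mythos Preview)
Your proposal is correct and matches the paper's own proof exactly: the paper simply cites \cref{null-semigroup-loc-conf} and \cref{locally-confluent-implies-glob}. Your additional commentary on the ingredients behind those results is accurate but not needed for the proof itself.
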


\begin{proof}
    It follows from \cref{null-semigroup-loc-conf,locally-confluent-implies-glob}.
\end{proof}

\begin{exemplo}\label{non-globalizable-in-algk-globalizable-in-semi}
    % Let $G$ be a group, $X$ a $\K$-vector space and $\af$ a non-globalizable partial action~\cite{jerez2025homologypartialgrouprepresentations} of $G$ on $X$ (for example, the partial action $\af$ of \cite[Example 2.28]{jerez2025homologypartialgrouprepresentations}). Endow $X$ with the structure of a null semigroup whose zero is the zero element of $X$. Then $X$ is a $\K$-algebra and $\af$ is a strong partial action of $G$ on $X$.

    Let $\af$ be the partial action of the group $G=\{e,g\}^3$ on the $\K$-vector space $X$ with basis $\{x, y, z, u, v, w\}$ as in \cite[Example 2.28]{jerez2025homologypartialgrouprepresentations}. Endow $X$ with the structure of a null semigroup whose zero is the zero element of $X$. Then $X$ is a $\K$-algebra and $\af$ is a strong partial action of $G$ on $X$.

    Observe that $\af$ has a globalization in $\Gact{\Sem}$ by \cref{null-semigroup-globalizable}. However, $\af$ is not globalizable in $\Gact{\alg{\K}}$. For, if there were a globalization of $\af$ in $\Gact{\alg{\K}}$, then $\af$ would be globalizable as a partial action on a vector space, which is not the case by \cite[Example 2.28]{jerez2025homologypartialgrouprepresentations}.
\end{exemplo}

\section*{Acknowledgements}

The authors thank the referees for the very careful reading and valuable suggestions that resulted in a significant improvement of the original manuscript. The second author was supported by FAPESC.

	\bibliography{Ref}{}

@Article{Abadie03,
 Author = {Abadie, Fernando},
 Title = {Enveloping actions and {Takai} duality for partial actions.},
 FJournal = {Journal of Functional Analysis},
 Journal = {J. Funct. Anal.},
 ISSN = {0022-1236},
 Volume = {197},
 Number = {1},
 Pages = {14--67},
 Year = {2003},
 Language = {English},
 DOI = {10.1016/S0022-1236(02)00032-0},
 Keywords = {46L05,46L55,43A07},
 zbMATH = {1895814},
 Zbl = {1036.46037}
}

@Article{geopactions,
 Author = {Hu, Jiawei and Vercruysse, Joost},
 Title = {Geometrically partial actions},
 FJournal = {Transactions of the American Mathematical Society},
 Journal = {Trans. Am. Math. Soc.},
 ISSN = {0002-9947},
 Volume = {373},
 Number = {6},
 Pages = {4085--4143},
 Year = {2020},
 Language = {English},
 DOI = {10.1090/tran/8058},
 Keywords = {18M05,16T05,14R20},
 zbMATH = {7207628},
 Zbl = {1452.18020}
}

@phdthesis{Abadie, 
	Author = {Abadie, Fernando},
	Journal = {tese de doutorado},
	Location = {S{\~a}o Paulo, Brazil},
	Month = {September},
	School = {Universidade de S{\~a}o Paulo},
	Title = {{Sobre A\c{c}{\~o}es Parciais, Fibrados de Fell, e Grup{\'o}ides}},
	Year = {1999}}

@Article{PartMonoids,
 Author = {Hollings, Christopher},
 Title = {Partial actions of monoids.},
 FJournal = {Semigroup Forum},
 Journal = {Semigroup Forum},
 ISSN = {0037-1912},
 Volume = {75},
 Number = {2},
 Pages = {293--316},
 Year = {2007},
 Language = {English},
 DOI = {10.1007/s00233-006-0665-7},
 Keywords = {20M30,20M15},
 zbMATH = {5215511},
 Zbl = {1180.20053}
}

@Article{saracco2022globalization,
 Author = {Saracco, Paolo and Vercruysse, Joost},
 Title = {Globalization for geometric partial comodules},
 FJournal = {Journal of Algebra},
 Journal = {J. Algebra},
 ISSN = {0021-8693},
 Volume = {602},
 Pages = {37--59},
 Year = {2022},
 Language = {English},
 DOI = {10.1016/j.jalgebra.2022.03.013},
 Keywords = {16T15,16W22,18A40,18M05},
 zbMATH = {7515328}
}

@Article{exel1994acao-parcial-origem,
 Author = {Exel, Ruy},
 Title = {Circle actions on {{\(C^*\)}}-algebras, partial automorphisms, and a generalized {Pimsner}-{Voiculescu} exact sequence},
 FJournal = {Journal of Functional Analysis},
 Journal = {J. Funct. Anal.},
 ISSN = {0022-1236},
 Volume = {122},
 Number = {2},
 Pages = {361--401},
 Year = {1994},
 Language = {English},
 DOI = {10.1006/jfan.1994.1073},
 Keywords = {46L45,46L55,46L80,46L40},
 zbMATH = {599806},
 Zbl = {0808.46091}
}

@article{exel_partial_1998,
	title = {Partial actions of groups and actions of inverse semigroups},
	volume = {126},
	issn = {0002-9939, 1088-6826},
	url = {https://www.ams.org/proc/1998-126-12/S0002-9939-98-04575-4/},
	doi = {10.1090/S0002-9939-98-04575-4},
	language = {en},
	number = {12},
	urldate = {2023-05-13},
	journal = {Proceedings of the American Mathematical Society},
	author = {Exel, Ruy},
	year = {1998},
	pages = {3481--3494},
}

@article{kellendonk2004partial,
  title={Partial actions of groups},
  author={Kellendonk, Johannes and Lawson, Mark V},
  journal={International Journal of Algebra and Computation},
  volume={14},
  number={01},
  pages={87--114},
  year={2004},
  publisher={World Scientific}
}

@article{Megrelishvili_2004,
	doi = {10.1016/j.topol.2004.06.006},
  
	url = {https://doi.org/10.1016%2Fj.topol.2004.06.006},
  
	year = 2004,
	month = {nov},
  
	publisher = {Elsevier {BV}
},
  
	volume = {145},
  
	number = {1-3},
  
	pages = {119--145},
  
	author = {Michael Megrelishvili and Lutz Schröder},
  
	title = {Globalization of confluent partial actions on topological and metric spaces},
  
	journal = {Topology and its Applications}
}

@Article{Dokuchaev-survey,
 Author = {Dokuchaev, M.},
 Title = {Recent developments around partial actions},
 Journal = {S{\~a}o Paulo J. Math. Sci.},
 ISSN = {1982-6907},
 Volume = {13},
 Number = {1},
 Pages = {195--247},
 Year = {2019},
 DOI = {10.1007/s40863-018-0087-y}
}

@Article{Batista-survey,
 Author = {Batista, Eliezer},
 Title = {Partial actions: what they are and why we care},
 Journal = {Bull. Belg. Math. Soc. - Simon Stevin},
 ISSN = {1370-1444},
 Volume = {24},
 Number = {1},
 Pages = {35--71},
 Year = {2017}
}

@article{Steinberg2003,
author = "Steinberg, B.",
journal = "Monatsh. Math.",
number = "2",
pages = "159--170",
title = "{Partial actions of groups on cell complexes}",
volume = "138",
year = "2003"
}

@article{DE,
author = "Dokuchaev, M. and Exel, R.",
journal = "Trans. Amer. Math. Soc.",
number = "5",
pages = "1931--1952",
title = "{Associativity of crossed products by partial actions, enveloping actions and partial representations}",
volume = "357",
year = "2005"
}

@article{DRS,
author = "Dokuchaev, Michael and del R{\'i}o, {\'A}ngel and Sim{\'o}n, Juan Jacobo",
journal = "Proc. Amer. Math. Soc.",
number = "2",
pages = "343--352",
title = "{Globalizations of Partial Actions on Nonunital Rings}",
volume = "135",
year = "2007"
}

@inproceedings{Ferrero06,
author = "Ferrero, Miguel",
booktitle = "{Groups, rings and group rings}",
pages = "155--162",
publisher = "Chapman \& Hall/CRC, Boca Raton, FL",
series = "{Lect. Notes Pure Appl. Math.}",
title = "{Partial actions of groups on semiprime rings}",
volume = "248",
year = "2006"
}

@InCollection{Cortes-Ferrero09,
 Author = {Cortes, Wagner and Ferrero, Miguel},
 Title = {Globalization of partial actions on semiprime rings.},
 BookTitle = {Groups, rings and group rings. International conference, Ubatuba, Brazil, July 28--August 2, 2008.},
 ISBN = {978-0-8218-4771-8},
 Pages = {27--35},
 Year = {2009},
 Publisher = {Providence, RI: American Mathematical Society (AMS)}
}

@Article{Bemm-Ferrero13,
 Author = {Bemm, Laerte and Ferrero, Miguel},
 Title = {Globalization of partial actions on semiprime rings},
 Journal = {J. Algebra Appl.},
 ISSN = {0219-4988},
 Volume = {12},
 Number = {4},
 Pages = {9},
 Note = {Id/No 1250202},
 Year = {2013},
 DOI = {10.1142/S0219498812502027}
}

@article{KN16,
author = "Khrypchenko, Mykola and Novikov, Boris",
journal = "J. Aust. Math. Soc.",
title = "{Reflectors and globalizations of partial actions of groups}",
volume = "104",
number = "3",
pages = "358--379",
year = "2018"
}

@article{Petrich-Reilly79,
author = "Petrich, M. and Reilly, N. R.",
journal = "Quart. J. Math. Oxford Ser. (2)",
number = "119",
pages = "339--350",
title = "{A representation of $E$-unitary inverse semigroups}",
volume = "30",
year = "1979"
}

@Article{Kudryavtseva-Laan23,
 Author = {Kudryavtseva, Ganna and Laan, Valdis},
 Title = {Globalization of partial actions of semigroups},
 Journal = {Semigroup Forum},
 ISSN = {0037-1912},
 Volume = {107},
 Number = {1},
 Pages = {200--217},
 Year = {2023},
 DOI = {10.1007/s00233-023-10364-z}
}

@Book{Palais,
 Author = {Palais, R. S.},
 Title = {A global formulation of the {Lie} theory of transformation groups},
 Series = {Mem. Am. Math. Soc.},
 ISSN = {0065-9266},
 Volume = {22},
 ISBN = {978-0-8218-1222-8; 978-0-8218-9964-9},
 Year = {1957},
 Publisher = {Providence, RI: American Mathematical Society (AMS)}
}

@Article{Megrelishvili86,
 Author = {Megrelishvili, M. G.},
 Title = {On the imbedding of topological spaces into spaces with strong properties of homogeneity},
 Journal = {Soobshch. Akad. Nauk Gruz. SSR},
 ISSN = {0132-1447},
 Volume = {121},
 Pages = {257--260},
 Year = {1986}
}

@Article{Green-Marcos94,
 Author = {Green, Edward L. and Marcos, Eduardo N.},
 Title = {Graded quotients of path algebras: {A} local theory},
 Journal = {J. Pure Appl. Algebra},
 ISSN = {0022-4049},
 Volume = {93},
 Number = {2},
 Pages = {195--226},
 Year = {1994},
 DOI = {10.1016/0022-4049(94)90112-0}
}

@Article{Cortes-Ferrero-Marcos16,
 Author = {Cortes, Wagner and Ferrero, Miguel and Marcos, Eduardo N.},
 Title = {Partial actions on categories},
 Journal = {Commun. Algebra},
 ISSN = {0092-7872},
 Volume = {44},
 Number = {7},
 Pages = {2719--2731},
 Year = {2016},
 DOI = {10.1080/00927872.2015.1044094}
}

@Article{khrypchenko2023partial,
 Author = {Khrypchenko, Mykola and Klock, Francisco},
 Title = {Partial monoid actions on objects in categories with pullbacks and their globalizations},
 FJournal = {Journal of Pure and Applied Algebra},
 Journal = {J. Pure Appl. Algebra},
 ISSN = {0022-4049},
 Volume = {228},
 Number = {11},
 Pages = {30},
 Note = {Id/No 107734},
 Year = {2024},
 Language = {English},
 DOI = {10.1016/j.jpaa.2024.107734},
 Keywords = {16W22,18B10,20M30,18A40},
 zbMATH = {7873758}
}

@book{Terese03,
  added-at = {2008-12-16T08:35:06.000+0100},
  author = {Terese},
  biburl = {https://www.bibsonomy.org/bibtex/28f7a18abdbb5478d1277c0164b2fa115/emanuel},
  interhash = {64f4a5be8a9d78475175c5dd01071eae},
  intrahash = {8f7a18abdbb5478d1277c0164b2fa115},
  keywords = {book term_rewriting},
  publisher = {Cambridge University Press},
  series = {Cambridge Tracts in Theoretical Computer Science},
  timestamp = {2008-12-16T08:35:06.000+0100},
  title = {Term Rewriting Systems},
  volume = 55,
  year = 2003
}

@book{Term-rewriting-and-all-that,
 author = {Baader, Franz and Nipkow, Tobias},
 title = {Term rewriting and all that},
 isbn = {0-521-77920-0; 0-521-45520-0},
 year = {1999},
 publisher = {Cambridge: Cambridge University Press},
 language = {English},
 doi = {10.1017/CBO9781139172752},
 keywords = {68Q42,03B35,08A70,68-02},
 zbMATH = {1377414},
 Zbl = {0948.68098}
}

@book{Clifford-Preston-1,
	Address = {Providence, Rhode Island},
	Author = {Clifford, A. and Preston, G.},
	Publisher = {Amer. Math. Soc.},
	Series = {{Math. Surveys and Monographs 7}},
	Title = {{The algebraic theory of semigroups}},
	Volume = 1,
	Year = {1961}}

@book{Howie-Fundamentals,
 author = {Howie, John M.},
 title = {Fundamentals of semigroup theory},
 fseries = {London Mathematical Society Monographs. New Series},
 series = {Lond. Math. Soc. Monogr., New Ser.},
 volume = {12},
 isbn = {0-19-851194-9},
 year = {1995},
 publisher = {Oxford: Clarendon Press},
 language = {English},
 keywords = {20Mxx,20-02,20M10,20M17,20M05,20M07,20M18,20M19,20M20},
 zbMATH = {789816},
 Zbl = {0835.20077}
}

@Article{jerez2025homologypartialgrouprepresentations,
      title={On the homology of partial group representations}, 
      author={Emmanuel Jerez},
      year={2025},
      eprint={2404.14650},
      archivePrefix={arXiv},
      primaryClass={math.AT},
      url={https://arxiv.org/abs/2404.14650}, 
    journal = "\href{http://arxiv.org/pdf/arXiv:2404.14650v2}{\tt arXiv:2404.14650v2}",
}

@article{McAlister74-I,
 author = {McAlister, D. B.},
 title = {Groups, semilattices and inverse semigroups},
 journal = {Trans. Am. Math. Soc.},
 issn = {0002-9947},
 volume = {192},
 pages = {227--244},
 year = {1974}
}

@phdthesis{Megrelishvili85,
 Author = {Megrelishvili, M. G.},
 Title = {Uniformity and continuous transformation groups},
 Journal = {PhD thesis (in Russian)},
 publisher = {Tbilisi State University},
 note = {Dissertation in Russian, available on \href{https://www.researchgate.net/publication/324562604_Preactions_and_universal_actions}{researchgate}},
 Year = {1985}
}

@article{E-2,
	author = "Exel, R.",
	journal = "Bol. Soc. Brasil. Mat. (N.S.)",
	number = "2",
	pages = "173--179",
	title = "{The {B}unce-{D}eddens algebras as crossed products by partial automorphisms}",
	volume = "25",
	year = "1994"
}

@article{E-3,
	author = "Exel, R.",
	journal = "Math. Scand.",
	number = "2",
	pages = "281--288",
	title = "{Approximately finite {$C^*$}-algebras and partial automorphisms}",
	volume = "77",
	year = "1995"
}

@book{Exel-book,
 author = {Exel, Ruy},
 title = {Partial dynamical systems, {Fell} bundles and applications},
 fseries = {Mathematical Surveys and Monographs},
 series = {Math. Surv. Monogr.},
 issn = {0076-5376},
 volume = {224},
 isbn = {978-1-4704-3785-5; 978-1-4704-4236-1},
 year = {2017},
 publisher = {Providence, RI: American Mathematical Society (AMS)},
 doi = {10.1090/surv/224}
}

@article{Munn76,
 author = {Munn, W. D.},
 title = {A note on {E}-unitary inverse semigroups},
 journal = {Bull. Lond. Math. Soc.},
 issn = {0024-6093},
 volume = {8},
 pages = {71--76},
 year = {1976},
 doi = {10.1112/blms/8.1.71}
}

@article{Bagio-Paques12,
 author = {Bagio, Dirceu and Paques, Antonio},
 title = {Partial groupoid actions: globalization, {Morita} theory, and {Galois} theory.},
 journal = {Commun. Algebra},
 issn = {0092-7872},
 volume = {40},
 number = {10},
 pages = {3658--3678},
 year = {2012},
 doi = {10.1080/00927872.2011.592889}
}

@article{Bagio-Pinedo16,
 author = {Bagio, Dirceu and Pinedo, Hector},
 title = {Globalization of partial actions of groupoids on nonunital rings.},
 journal = {J. Algebra Appl.},
 issn = {0219-4988},
 volume = {15},
 number = {5},
 pages = {16},
 note = {Id/No 1650096},
 year = {2016},
 doi = {10.1142/S0219498816500961}
}

@article{Bagio-Paques-Pinedo20,
 author = {Bagio, Dirceu and Paques, Antonio and Pinedo, H{\'e}ctor},
 title = {Restriction and extension of partial actions},
 journal = {J. Pure Appl. Algebra},
 issn = {0022-4049},
 volume = {224},
 number = {10},
 pages = {18},
 note = {Id/No 106391},
 year = {2020},
 doi = {10.1016/j.jpaa.2020.106391}
}

@article{Marin-Pinedo20,
 author = {Mar{\'{\i}}n, V{\'{\i}}ctor and Pinedo, H{\'e}ctor},
 title = {Partial groupoid actions on {{\(R\)}}-categories: globalization and the smash product},
 journal = {J. Algebra Appl.},
 issn = {0219-4988},
 volume = {19},
 number = {5},
 pages = {22},
 note = {Id/No 2050083},
 year = {2020},
 doi = {10.1142/S0219498820500838}
}

@article{Marin-Pinedo21,
 author = {Mar{\'{\i}}n, V{\'{\i}}ctor and Pinedo, H{\'e}ctor},
 title = {Partial groupoid actions on sets and topological spaces},
 journal = {S{\~a}o Paulo J. Math. Sci.},
 issn = {1982-6907},
 volume = {15},
 number = {2},
 pages = {940--956},
 year = {2021},
 doi = {10.1007/s40863-021-00266-9}
}

@article{Marin-Pinedo-Rodriguez25,
 author = {Mar{\'{\i}}n, V{\'{\i}}ctor and Pinedo, H{\'e}ctor and Rodr{\'{\i}}guez, Jos{\'e} L. Vilca},
 title = {Partial groupoid actions on smooth manifolds},
 journal = {Bull. Braz. Math. Soc. (N.S.)},
 issn = {1678-7544},
 volume = {56},
 number = {1},
 pages = {20},
 note = {Id/No 19},
 year = {2025},
 doi = {10.1007/s00574-025-00441-y}
}

@article{Demeneghi-Tasca25,
 author = {Demeneghi, Paulinho and Tasca, Felipe Augusto},
 title = {Globalization of partial inverse semigroupoid actions on sets},
 journal = {Commun. Algebra},
 issn = {0092-7872},
 volume = {53},
 number = {5},
 pages = {1904--1920},
 year = {2025},
 doi = {10.1080/00927872.2024.2426039}
}

@article{Nystedt18,
 author = {Nystedt, Patrik},
 title = {Partial category actions on sets and topological spaces},
 journal = {Commun. Algebra},
 issn = {0092-7872},
 volume = {46},
 number = {2},
 pages = {671--683},
 year = {2018},
 doi = {10.1080/00927872.2017.1327057}
}

@article{Gilbert05,
 author = {Gilbert, N. D.},
 title = {Actions and expansions of ordered groupoids},
 journal = {J. Pure Appl. Algebra},
 issn = {0022-4049},
 volume = {198},
 number = {1-3},
 pages = {175--195},
 year = {2005},
 doi = {10.1016/j.jpaa.2004.11.006}
}
	\bibliographystyle{acm}
	
\end{document}